\def\EMAIL#1{\href{mailto:#1}{#1}}
\newcommand{\bx}{\boldsymbol{x}}
\newcommand{\ty}{\widetilde{y}}
\newcommand{\tby}{\widetilde{\by}}
\newcommand{\bu}{\boldsymbol{u}}
\newcommand{\bw}{\boldsymbol{w}}
\newcommand{\by}{\boldsymbol{y}}
\newcommand{\bz}{\boldsymbol{z}}
\newcommand{\bZ}{\boldsymbol{Z}}
\newcommand{\bv}{\boldsymbol{v}}
\newcommand{\bxi}{\boldsymbol{\xi}}
\newcommand{\bpi}{\boldsymbol{\pi}}
\newcommand{\bPhi}{\boldsymbol{\Phi}}
\newcommand{\tPhi}{\widetilde{\Phi}}
\newcommand{\tbPhi}{\widetilde{\bPhi}}
\newcommand{\bLambda}{\boldsymbol{\Lambda}}
\newcommand{\bPsi}{\boldsymbol{\Psi}}
\newcommand{\tbPsi}{\widetilde{\bPsi}}
\newcommand{\bOmega}{\boldsymbol{\Omega}}
\newcommand{\tOmega}{\widetilde{\Omega}}
\newcommand{\tbOmega}{\widetilde{\bOmega}}
\newcommand{\bq}{\boldsymbol{q}}
\newcommand{\bQ}{\boldsymbol{Q}}
\newcommand{\tQ}{\widetilde{Q}}
\newcommand{\tR}{\widetilde{R}}
\newcommand{\tbQ}{\widetilde{\bQ}}
\newcommand{\tbR}{\widetilde{\bR}}
\newcommand{\tbA}{\widetilde{\bA}}
\newcommand{\tbB}{\widetilde{\bB}}
\newcommand{\tK}{\widetilde{K}}
\newcommand{\bK}{\boldsymbol{K}}
\newcommand{\tbK}{\widetilde{\bK}}
\newcommand{\bP}{\boldsymbol{P}}
\newcommand{\bS}{\boldsymbol{S}}
\newcommand{\bD}{\boldsymbol{D}}
\newcommand{\bF}{\boldsymbol{F}}
\newcommand{\tbG}{\widetilde{\bG}}
\newcommand{\tbF}{\widetilde{\bF}}
\newcommand{\bG}{\boldsymbol{G}}
\newcommand{\bd}{\boldsymbol{d}}
\newcommand{\bp}{\boldsymbol{p}}
\newcommand{\bA}{\boldsymbol{A}}
\newcommand{\bB}{\boldsymbol{B}}
\newcommand{\bE}{\boldsymbol{E}}
\newcommand{\bH}{\boldsymbol{H}}
\newcommand{\tH}{\widetilde{H}}
\newcommand{\tbH}{\widetilde{\bH}}
\newcommand{\bR}{\boldsymbol{R}}
\newcommand{\bM}{\boldsymbol{M}}
\newcommand{\br}{\boldsymbol{r}}
\newcommand{\bI}{\boldsymbol{I}}
\newcommand{\bSigma}{\boldsymbol{\Sigma}}
\newcommand{\osigma}{\overline{\sigma}}
\newcommand{\usigma}{\underline{\sigma}}
\newcommand{\ulambda}{\underline{\lambda}}
\newcommand{\bzero}{\boldsymbol{0}}
\newcommand{\st}{\mathop{\text{\normalfont s.t.}}}
\newcommand{\diag}{\mathop{\text{\normalfont diag}}}
\newcommand{\cG}{\mathcal{G}}
\newcommand{\cV}{\mathcal{V}}
\newcommand{\cE}{\mathcal{E}}
\newcommand{\cI}{\mathcal{I}}
\newcommand{\cJ}{\mathcal{J}}
\newcommand{\cT}{\mathcal{T}}
\newcommand{\tz}{\widetilde{z}}
\newcommand{\tA}{\widetilde{A}}
\newcommand{\tB}{\widetilde{B}}
\newcommand{\tx}{\widetilde{x}}
\newcommand{\tu}{\widetilde{u}}
\newcommand{\td}{\widetilde{d}}
\newcommand{\tp}{\widetilde{p}}
\newcommand{\tq}{\widetilde{q}}
\newcommand{\tr}{\widetilde{r}}
\newcommand{\bXi}{\boldsymbol{\Xi}}
\newcommand{\tbx}{\widetilde{\bx}}
\newcommand{\tbz}{\widetilde{\bz}}
\newcommand{\tbu}{\widetilde{\bu}}
\newcommand{\tbq}{\widetilde{\bq}}
\newcommand{\tbr}{\widetilde{\br}}
\newcommand{\tbd}{\widetilde{\bd}}
\newcommand{\tbp}{\widetilde{\bp}}
\newcommand{\rbr}[1]{\left(#1\right)} 
\newcommand{\cbr}[1]{\left\{#1\right\}}
\newcommand{\qedhere}{\hfill\ensuremath{\blacksquare}}
\newcommand{\red}[1]{{\color{black}#1}}
\begin{document} 
\TITLE{\Large Near-Optimal Performance of \red{Stochastic Model Predictive Control}}
\ARTICLEAUTHORS{%
\AUTHOR{Sungho Shin\footnote{Most of this work was done while at the Mathematics and Computer Science Division, Argonne National Laboratory.}}
\AFF{Department of Chemical Engineering, Massachusetts Institute of Technology, \EMAIL{sushin@mit.edu}}
\AUTHOR{Sen Na}
\AFF{School of Industrial and Systems Engineering, Georgia Institute of Technology, \EMAIL{senna@gatech.edu}}
\AUTHOR{Mihai Anitescu}
\AFF{
Mathematics and Computer Science Division, Argonne National Laboratory,\\
Department of Statistics, University of Chicago, \EMAIL{anitescu@mcs.anl.gov}
}
}

\ABSTRACT{%
  This article presents a \red{dynamic regret analysis} for \red{stochastic model predictive control (\red{SMPC})} in linear systems with quadratic performance index and additive and multiplicative uncertainties. Under a finite support assumption, the problem can be cast as a finite-dimensional quadratic program, but the problem becomes quickly intractable as the problem size grows exponentially in the horizon length. \red{SMPC} aims to compute approximate solutions by solving a sequence of problems with truncated prediction horizons and committing the solution in a receding-horizon fashion. While this approach is widely used in practice, its performance relative to the optimal solution is not well understood. This article reports for the first time a rigorous near-optimal performance guarantee of \red{SMPC}: Under stabilizability and detectability conditions, the dynamic regret of \red{SMPC} is exponentially small in the prediction horizon length, allowing \red{SMPC} to achieve near-optimal performance at a substantially reduced computational expense.
}%

\KEYWORDS{stochastic optimal control; model predictive control; stochastic programming; \red{performance analysis}; dynamic regret}

\maketitle

\section{Introduction}\label{sec:intro}
 
Sequential decision-making under uncertainty is a classical problem studied across several disciplines, including operations research, control theory, and machine learning. It has been studied in different contexts, such as stochastic control \cite{aastrom2012introduction}, robust control \cite{green2012linear}, stochastic programming \cite{birge1985decomposition}, dynamic programming \cite{bertsekas2012dynamic}, and reinforcement learning \cite{bertsekas2019reinforcement}, with different settings, objectives, and strategies to deal with stochasticity. An overarching goal of these studies is to design a decision policy that optimizes the expected performance index of interest over a given period of time, subject to stochastic dynamics.

While solving the stochastic sequential decision problems is typically intractable, the problem may reduce to tractable forms in a number of particular situations. One of the well-known tractable cases is the stochastic linear-quadratic regulator, where the dynamics are linear, the performance index is quadratic, and the uncertainty is additive. In such a case, the certainty equivalence principle allows the formulation of a~deterministic equivalent of the stochastic control problem \cite{theil1957note,simon1956dynamic}. This setting can be further generalized to problems with exponential-quadratic performance criteria \cite{jacobson1973optimal}. Another arguably tractable setting is the Markov decision process (MDP) with finite state-action space. It is well known that when the state-action space is reasonably small, the optimal policy can be computed in a tractable manner, by exploiting the stationary nature of the dynamics, via value or policy iterations \cite{bertsekas2012dynamic}. Even when the underlying dynamics are unknown, the optimal policy can be learned through Q-learning \cite{mnih2015human} or policy optimization \cite{schulman2015trust}. Moreover, it has been recently reported that the periodicity in the uncertainty allows to drastically reduce the complexity of multi-stage stochastic programs \cite{kumar2019hierarchical,shapiro2020periodical}.

If such a desirable structure (certainty equivalence, stationarity, or periodicity) is not present, the computation of optimal decisions often becomes intractable. To rigorously account for the uncertainty, one needs to explicitly consider every possible realization of the uncertainty and perform explicit planning within a single optimization~problem. When the support of the uncertainty is finite, or the sample average approximation technique \cite{kleywegt2002sample,shapiro2006complexity} is used, this~problem can be cast as a finite-dimensional optimization problem, called a multistage stochastic program \cite{birge1985decomposition,pereira1991multi,dantzig1993multi,shapiro2011analysis}. This problem is solvable in principle but becomes quickly intractable as the size of the scenario tree grows exponentially in the horizon length. To overcome such intractability, various decomposition strategies, such as nested Benders decomposition \cite{ho1974nested}, progressive hedging \cite{mulvey1991applying,rockafellar1991scenarios}, and stochastic dual dynamic programming \cite{pereira1991multi}, have been investigated. 
In the context of control, the intractability of multistage stochastic programs is typically addressed by means of stochastic model predictive control (\red{SMPC}, also called receding-horizon control, rolling-horizon heuristics, or look-ahead tree policy) \cite{mesbah2016stochastic}. In particular, multi-stage stochastic programming-based \red{SMPC} has been studied along the lines of work on multistage nonlinear model predictive control \cite{lucia2013multi,thangavel2018dual,yu2019advanced,lucia2020stability} and scenario-based model predictive control \cite{de2005stochastic,bernardini2009scenario,bernardini2011stabilizing,schildbach2014scenario}. In these works, the full multistage problem is sought to be approximately solved by using a sequence of multistage stochastic programs with {\it truncated} prediction horizons. In each time stage, a truncated problem is solved, and the solution is actuated in a {\it receding-horizon} fashion---in each stage, only the first-stage decision is actuated, and in the next stage, the problem is formulated with a shifted prediction window. In this way, we attempt to mimic the behavior of optimal policy with a sequence of truncated-horizon predictive decision policies. This method is becoming increasingly popular in different applications (e.g., battery storage \cite{kumar2020stochastic}, smart buildings \cite{pedersen2018investigating}, HVAC system \cite{kumar2018stochastic}, and microgrids \cite{hans2015scenario}), and a specialized numerical solver is recently developed \cite{frison2020hpipm}.

\red{
  \paragraph{Related Work}
  Prior to the advent of \red{SMPC}, the stability and performance properties of receding horizon controllers were studied in the linear-quadratic control setting with additive uncertainties. In~this setting, the stochastic optimal control policy can be represented as an affine function of uncertainty realizations in \cite{goulartInputtostateStabilityRobust2008}. 
  Therein, sufficient conditions for the receding horizon controllers to satisfy~input-to-state stability have been established, leveraging a technique that parameterizes the control~law~as~an affine state feedback.

The stability and bounded average performance properties of the \red{SMPC} control policy have~primarily been investigated over the past decade. These properties were first analyzed in \cite{chatterjee2014stability}, extending the standard assumptions used in MPC stability and average performance analysis, particularly those concerning the terminal cost and terminal policy that ensure the descent property of the value function. Subsequently, \cite{lorenzenConstraintTighteningStabilityStochastic2017} established asymptotic average performance (i.e., the boundedness of the expected norm of the state variable) and asymptotic stability in probability (i.e., the existence of a region of attraction within which the states remain bounded with high probability)~for~unconstrained~LQ optimal controllers under additive uncertainty.

There has been significant progress in characterizing the stability and bounded average performance properties of nonlinear \red{SMPC} algorithms \cite{mcallisterStochasticRobustnessNominal2023, mcallisterNonlinearStochasticModel2023, mcallisterInherentDistributionalRobustness2024}. The stability results for nonlinear \red{SMPC} were first established in \cite{mcallisterNonlinearStochasticModel2023}, where the authors demonstrated robust asymptotic stability in expectation and bounded average performance.
This essentially guarantees the boundedness of the expected norm of the closed-loop state trajectory, and the bound is expressed in terms of the initial condition and~the covariance of independent and identically distributed (i.i.d.) random disturbances. Furthermore, the stability of \red{SMPC} controllers has been explored in greater depth, revealing that \red{SMPC} solutions are not necessarily robustly asymptotically stable; that is, there is no uniform upper bound on state trajectories for all realizations of uncertainties \cite{mcallisterStochasticRobustnessNominal2023}. Finally, a distributional robustness property~of \red{SMPC} was established in \cite{mcallisterInherentDistributionalRobustness2024}, where state trajectory bounds are expressed in terms of the Wasserstein distance between the true and nominal distributions of the disturbances.

Although stability and bounded average performance properties have been extensively studied in the literature, \textit{the impact of the prediction horizon length on performance has not been sufficiently studied}. More specifically, the performance loss of the \red{SMPC} policy compared to the optimal policy~has not been rigorously quantified. In classical MPC literature, the use of a truncated horizon is often~justified by empirical observations, suggesting that the performance of the MPC~controller~improves~as~the horizon length becomes adequately long \cite{morari1999model}. However, such empirical observations are not rigorously substantiated in \red{SMPC} settings. Unless the exact value function is known and incorporated into the MPC formulation in the form of terminal penalty, the performance of the MPC control policy incurs constant suboptimality at each stage, leading to an overall dynamic regret of $O(T)$. 
Characterizing this suboptimality is important, as most practical MPC policies utilize terminal penalty functions that do not exactly match the optimal value function \cite{mayneStabilizingConditionsModel2019}. Even in the deterministic case, a rigorous characterization of the performance gap caused by truncated horizons has only been made recently \cite{Li2019Online, Na2021Global, Na2020Superconvergence, lin2021perturbation}. This situation motivates us to investigate an important open question: {\it What is the price of truncating the prediction horizon in \red{SMPC}?}

}

We aim to address this question by analyzing the dynamic regret of \red{SMPC}. In particular, we characterize the relationship between the performance loss caused by truncation and the prediction horizon length. Under~mild assumptions, we show that \red{SMPC} policy is exponentially stabilizing in expectation if the prediction horizon is sufficiently long and the performance loss compared with the optimal policy (referred to as {\it dynamic regret}) decays exponentially in the prediction horizon length. This result rigorously substantiates the empirical observation that an \red{SMPC} scheme with a sufficiently long prediction horizon closely approximates the optimal policy. \red{Furthermore, this result reveals that \red{SMPC} can achieve {\it near-optimal} performance, in the sense that one can make its performance exponentially small by controlling the prediction horizon length of \red{SMPC}. }

This paper is close in spirit to the recent work on the \red{dynamic regret analysis} of time-varying DMPC \cite{lin2021perturbation}, which follows the line of work on the regret analysis of linear predictive control \cite{Li2019Online, yu2020power, zhang2021regret}. Under controllability and a positive definite stage cost assumption, Lin et al.~\cite{lin2021perturbation} have proved that DMPC enjoys exponential input-to-state stability and that the dynamic regret decays exponentially with the prediction horizon length. 
\red{
This result establishes a sharper characterization of performance compared to the classical performance studies of DMPC.
}
Prior to \cite{lin2021perturbation}, it was shown in \cite{keerthi1988optimal} that the performance of receding-horizon control approaches that of the infinite-horizon optimal solution for disturbance-free linear-quadratic setting. The performance of nonlinear receding-horizon control has been also studied in a number of different settings \cite{hu2002toward,grune2008infinite,angeli2011average}.

\red{
  The exponential perturbation bound, the key technical tool used in this paper, is related to the well-known Turnpike property in optimal control theory \cite{gruneEconomicModelPredictive2020}. The Turnpike property \cite[Definition 6]{gruneEconomicModelPredictive2020} states that, for a sufficiently long time horizon, the optimal trajectory of the system will remain close to the steady-state solution for most of the time. This property has been widely adopted in the stability analysis of economic MPC \cite{faulwasserEconomicNonlinearModel2018}. The Turnpike property can be considered a special case of the exponential perturbation bound when all disturbances are zero, and the only perturbations arise from the initial state and the terminal cost gradient. Based on this result, one can derive the near-optimality property \cite[Theorem 3]{gruneEconomicModelPredictive2020}. As the Turnpike property is primarily concerned with the unperturbed (zero disturbance) system and focuses solely on the impact of the horizon, this near-optimality result only applies to the unperturbed system. In contrast, our result applies to the general linear-quadratic control setting with additive and multiplicative uncertainties.
}

\paragraph{Contributions}{
Our main contribution is the \red{dynamic regret analysis} of \red{SMPC}. We show that under finite~support, stabilizability, and detectability assumptions, the dynamic regret of the \red{SMPC} policy decays exponentially with the prediction horizon length (Theorem \ref{thm:perf}). In other words, \red{SMPC} can achieve near-optimal performance. Our result generalizes the DMPC performance results of \cite{lin2021perturbation} by allowing the problem formulation to explicitly account for the uncertainty; that is, we do not need to assume that the perfect future information is available.
\red{
The key technical novelty introduced in the proof lies in enabling the analysis of the perturbation bound for optimal control problems with scenario tree structures. To the best of our knowledge, this work is the first to introduce perturbation analysis of the Karush-Kuhn-Tucker (KKT) system embedded within scenario trees.~Specifically,~the uniform perturbation bound for the KKT system is derived by scaling the system according to the probabilities associated with each scenario.
}
Furthermore, we relax the controllability and positive definite cost assumption to stabilizability and detectability assumptions,~while~deriving a performance guarantee that matches the results in \cite{lin2021perturbation}. Our results are obtained by reformulating the stochastic control problem as a multistage stochastic program and leveraging the state-of-the-art perturbation bound for graph-structured optimization problems \cite{demko1977inverses,shin2022exponential,shin2021controllability}.
}

\red{
  \begin{remark}
    This paper presents a dynamic regret analysis for linear-quadratic control settings. However, it is crucial to emphasize that this simple framework primarily serves as a benchmark for analyzing the \red{SMPC} policy, rather than as a practical application. Practical problems related to the aforementioned applications typically involve nonlinear dynamics and/or equality and inequality constraints, which are not addressed within the linear-quadratic control framework. 
    In control theory and related~literature, linear-quadratic control problems have frequently been employed as benchmarks for examining~various theoretical properties of control methods, including stability, sample complexity, regret, and algorithm convergence \cite{lin2021perturbation, linBoundedRegretMPCPerturbation, shin2022near, maniaCertaintyEquivalenceEfficient2019, maniaSimpleRandomSearch2018, deanSampleComplexityLinear2020, luoDynamicRegretMinimization2022, fazelGlobalConvergencePolicy2018}.
    This is primarily because the linear-quadratic setting is analytically tractable and does not require assumptions that are difficult to verify in practice. The simplicity of this framework enables clear and rigorous analysis, providing insights that can often be generalized to more complex situations. For instance, one might anticipate that the performance~results presented in this paper can be generalized to nonlinear systems when they are operated around a small perturbation of the steady state \cite{shinControllabilityObservabilityImply2021}, or extended to inequality-constrained scenarios by assuming suitable controllability conditions for various active sets of inequality constraints \cite{xuExponentiallyAccurateTemporal2018, shinOverlappingSchwarzDecomposition2020}.
    Thus, this work concentrates on the linear-quadratic framework to offer a clear and rigorous analysis of the performance of the \red{SMPC} policy, avoiding the additional complexities associated with nonlinearities and constraints.
    We consider the extension of our results to more general settings (nonlinear and constrained) an intriguing avenue for future research.
  \end{remark}

}

\paragraph{Notation}{
  We denote $a\wedge b = \min(a,b)$ and $a \vee b = \max(a,b)$. The set of real numbers and the set of integers are denoted by $\mathbb{R}$ and $\mathbb{I}$, respectively. The set of symmetric matrices in $\mathbb{R}^{n\times n}$ are denoted by $\mathbb{S}_n$. We define $\mathbb{I}_{A}\coloneqq\mathbb{I}\cap A$, $\mathbb{I}_{\geq 0}\coloneqq\mathbb{I}_{[0,\infty)}$, and $\mathbb{I}_{> 0}\coloneqq\mathbb{I}_{(0,\infty)}$. The identity matrix is denoted by $\bI$, and the zero matrix or vector is denoted by $\bzero$. We use $\Vert\cdot\Vert$ to denote 2-norm for vectors and induced 2-norm for matrices. For matrices $A$ and $B$, $A\succ(\succeq) B$ indicates that $A-B$ is symmetric positive (semi)-definite. We say $A$ is $L$-bounded if $\|A\|\leq L$ and say it is $\gamma$-positive definite if $A\succeq \gamma \bI$ for $\gamma>0$. We use the syntax $[M_1;\cdots;M_n]\coloneqq[M_1^\top\,\cdots\,M_n^\top]^\top$; $\{M_i\}_{i\in \cI}\coloneqq[M_{i_1}; \cdots; M_{i_{m}}]$; $\{M_{i,j}\}_{i\in \cI,j\in \cJ}\coloneqq\{\{M_{i,j}^\top\}_{j\in \cJ}^\top\}_{i\in \cI}$ for $\cI\coloneqq\{i_1<\cdots<i_{m}\}$ and $\cJ\coloneqq\{j_1<\cdots<j_{n}\}$. Also, we use the syntax $M[\cI,\cJ]:=\{M[i,j]\}_{i\in \cI,j\in \cJ}$, where $M[i,j]$ is the $(i,j)$-th component of $M$. For $\{v_t\}_{t\in\mathbb{I}_{[0,T]}}$ and $\{M_{t,t'}\}_{t\in\mathbb{I}_{[0,T]},t'\in\mathbb{I}_{[0,T]}}$, we use the following convention: $\bv_{a:b}\coloneqq\{v_t\}_{t\in\mathbb{I}_{[a,b]}}$; $\bM_{a:b,c:d}\coloneqq\{M_{t,t'}\}_{t\in\mathbb{I}_{[a,b]},t'\in\mathbb{I}_{[c,d]}}$, where $a,b,c,d\in \mathbb{I}_{[0,T]}$. 
}

\section{Settings}\label{sec:setting}

\subsection{Model}\label{sec:setting-1}
We consider a discrete-time {\it stochastic process} $\bxi\coloneqq\{\xi_t\}_{t=0}^T$, where $t$ is the time index, $T\in\mathbb{I}_{>0}$ is the full horizon length, and $\xi_t$ is a random variable taking a value in some measurable set $\Xi$. We consider a discrete-time linear system with additive and multiplicative uncertainties:
\begin{equation}\label{eqn:dyn}
  x_t = f(x_{t-1},u_{t-1};\xi_{t})\coloneqq A(\xi_t) x_{t-1} + B(\xi_t) u_{t-1} + d(\xi_t). 
\end{equation}
Here, $A(\xi_t)$, $B(\xi_t)$, and $d(\xi_t)$ are the random data that take values in $\mathbb{R}^{n_x\times n_x}$, $\mathbb{R}^{n_x\times n_u}$, and $\mathbb{R}^{n_x}$, respectively. Moreover, we consider a stagewise {\it performance index} with additive and multiplicative uncertainties:
\begin{equation}\label{eqn:pi}
  \ell(x_t,u_t;\xi_t)  \coloneqq
  \frac{1}{2}  \begin{bmatrix}
    x_t\\
    u_t
  \end{bmatrix}^\top
  \begin{bmatrix}
    Q(\xi_t)\\
    & R(\xi_t)
  \end{bmatrix}\begin{bmatrix}
    x_t\\
    u_t
  \end{bmatrix} - \begin{bmatrix}
    q(\xi_t)\\
    r(\xi_t)
  \end{bmatrix}^\top
  \begin{bmatrix}
    x_t\\
    u_t
  \end{bmatrix}
\end{equation}
Here, $x_t\in\mathbb{R}^{n_x}$ and $u_t\in\mathbb{R}^{n_u}$ are the state and control variables at time $t$; and $Q(\xi_t)$, $R(\xi_t)$, $q(\xi_t)$, and $r(\xi_t)$ are the random data that take values in $\mathbb{S}_{n_x}$, $\mathbb{S}_{n_u}$, $\mathbb{R}^{n_x}$, and $\mathbb{R}^{n_u}$, respectively. In general, $d(\xi_t)$ in the system \eqref{eqn:dyn} is referred to as {\it disturbance}, and $q(\xi_t)$ and $r(\xi_t)$ in the index \eqref{eqn:pi} are referred to as {\it cost vectors}. The setting is illustrated in Figure \ref{fig:settings}. 

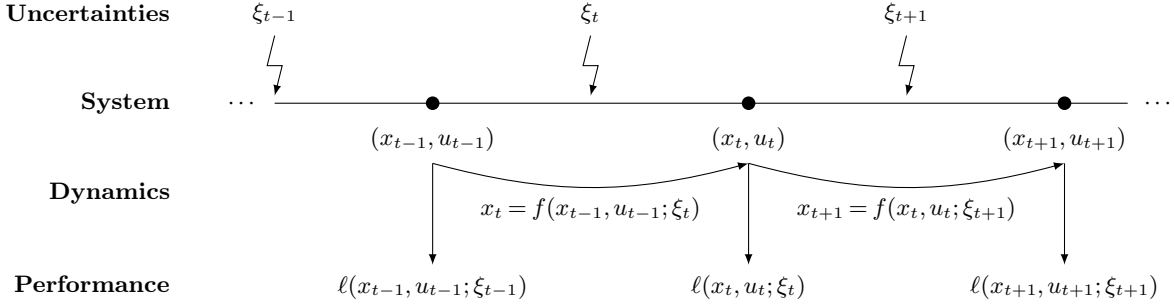
\begin{figure}[t]
  \centering
  \begin{tikzpicture}[font=\footnotesize]
    \def\L{4.2}
    \foreach \x/\l in {-1/t-1,0/t,1/t+1}{
      \node[circle,fill=black,scale=.5,label={[below,yshift=-8pt]:$(x_{\l},u_{\l})$}] at (\L*\x,0) {};\
      \draw[-latex] (\L*\x-\L/2,.9) -- (\L*\x-.1-\L/2,.5) -- (\L*\x+.1-\L/2,.5) -- (\L*\x-\L/2,.1);
      \node at (\L*\x-\L/2,1.2) {$\xi_{\l}$};
      \draw[-latex] (\L*\x,-.8) -- (\L*\x,-2.15) node[below]{$\ell(x_{\l},u_{\l};{\xi_{\l}})$};
    };
    \draw[-latex] (-\L,-.8) [out=-15,in=-165] to node[below] {$x_t=f(x_{t-1},u_{t-1};{ \xi_t})$} (0,-.8)  ;
    \draw[-latex] (0,-.8) [out=-15,in=-165] to node[below] {$x_{t+1}=f(x_{t},u_{t};{ \xi_{t+1}})$} (\L,-.8);
    \draw (-1.5*\L,0) -- (1.2*\L,0);
    \node at (-1.6*\L,0) {$\cdots$};
    \node at (1.3*\L,0) {$\cdots$};
    \node[anchor=east,align=right] at (-1.8*\L,1.2) {\bfseries Uncertainties};
    \node[anchor=east] at (-1.8*\L,-1.2) {\bfseries Dynamics};
    \node[anchor=east] at (-1.8*\L,0) {\bfseries System};
    \node[anchor=east,align=right] at (-1.8*\L,-2.4) {\bfseries Performance};
  \end{tikzpicture} 
  \caption{Illustration of settings}
  \label{fig:settings}
\end{figure}

\red{
\begin{remark}
A key difference between our setting and classical stochastic control is that we~do not impose certain restrictive assumptions about the distributions of uncertainty, which are commonly found in the SMPC literature.
Traditionally, the SMPC literature assumes that $\{\xi_t\}_{t=0}^T$ are mean zero and independent, identically distributed (i.i.d).  
While a nonzero mean can often be integrated into the system dynamics, nonstationary random disturbances cannot be classified as i.i.d random variables. 
The i.i.d assumption is appropriate when the control objective is to track the origin in~the presence of mean-zero disturbances; however, in many applications, the system is affected by exogenous factors that may not be i.i.d.
For instance, in energy management applications, the system must operate in response to time-varying and uncertain exogenous factors such as energy demand and generation costs \cite{chen2022reinforcement}.  
These factors exhibit periodicity but are also significantly affected by~unpredictable~weather conditions, which may only be forecasted up to a certain level of confidence. Our treatment of uncertainties is well-suited for such systems as we do not rely on the i.i.d assumption.

\end{remark}
}

\subsection{Problem Formulation}\label{sec:setting-2}

We assume the following event order:
\begin{equation*}
  \xi_0, x_0, u_0,\xi_1, x_1, u_1,\cdots,\xi_T, x_T, u_T.
\end{equation*}
In each stage, the control decision $u_t$ is made after partially observing the past uncertainty $\bxi_{0:t}$; that is, the control is a {\it recourse} decision \cite{birge2011introduction}. This implies~that $u_t$ can be dependent on $\bxi_{0:t}$, and it is of interest to obtain an optimal decision process $\{u_t(\cdot)\}_{t=0}^T$, where $u_t(\cdot)$ is a function of $\bxi_{0:t}$. Furthermore, we assume that the distribution of $\bxi$ is known. Thus, when the decision is made after observing $\bxi_{0:t}$, the conditional distribution of $\bxi_{t+1:T}$ given $\bxi_{0:t}$ can be taken into account. We denote by $\Xi_t$ and $\bXi_{0:t}$ the support of $\xi_t$ and $\bxi_{0:t}$, respectively. We let $\bXi_{0:t}(\overline{\bxi}_{0:\tau})\coloneqq\{\bxi_{0:t}\in\bXi_{0:t} : \bxi_{0:\tau} = \overline{\bxi}_{0:\tau}\}$ for $\tau\leq t$; $w_t(\bxi_{0:t})\coloneqq[x_t(\bxi_{0:t});u_t(\bxi_{0:t})]$;~$p(\xi_t)\coloneqq[q(\xi_t);r(\xi_t);d(\xi_t)]$; $\cT\coloneqq\mathbb{I}_{[0,T]}$; and $\cT_{a:b}\coloneqq\cT\cap \mathbb{I}_{[a,b]}$.

We now state the problem formulation: 
\begin{subequations}\label{eqn:orig}
  \begin{align} 
    J^\star(\overline\xi_0;\overline{w}_{-1})\coloneqq \min_{\{w_t(\cdot)\}_{t\in\cT}}\;
    & \mathbb{E}_{\bxi}\left[\sum_{t\in\cT}\ell(w_t(\bxi_{0:t});\xi_t)\;\middle|\; \xi_0 = \overline{\xi}_0\right]\\ 
    \st\;\; & x_0(\overline\xi_0) = f(\overline{w}_{-1}; \overline{\xi}_0)\\
    & x_t(\overline\bxi_{0:t}) = f(w_{t-1}(\overline\bxi_{0:t-1}); \overline\xi_{t}),\;\;\forall t\in\cT_{1:T},\;\overline\bxi_{0:t}\in\bXi_{0:t}(\overline{\xi}_0),\label{eqn:orig-con}
  \end{align}
\end{subequations}
where $\overline{w}_{-1}\in\mathbb{R}^{n_x}\times\mathbb{R}^{n_u}$ and $\overline{\xi}_0\in\Xi_{0}$ are given and $w_t:\bXi_{0:t}(\overline{\xi}_0)\rightarrow \mathbb{R}^{n_x}\times \mathbb{R}^{n_u}$. The solution of \eqref{eqn:orig} is denoted by $\{w^\star_t(\cdot;\overline{w}_{-1})\}_{t\in\cT}$ (the existence and uniqueness of the solution will be discussed in Section \ref{sec:perturb}); $w^\star_t(\bxi_{0:t};\overline{w}_{-1})$ is random because it depends on $\bxi_{0:t}$, but it is fixed once $\bxi_{0:t}$ is realized.

Problem \eqref{eqn:orig} seeks to minimize the expected performance index \eqref{eqn:pi} over time horizon $\cT$, while satisfying the stochastic dynamic equation \eqref{eqn:dyn} for all possible realizations of the uncertainty.  Problem \eqref{eqn:orig} does not assume the knowledge of exact uncertainty, but it assumes that {\it the exact distribution} is known. Unlike the deterministic case, in \eqref{eqn:orig} we seek an optimal decision {\it process} $\{w_t(\bxi_{0:t})\}_{t\in\cT}$, rather than an optimal {\it trajectory} $\{w_t\}_{t\in\cT}$. When the support of $\bxi$ is a singleton, the problem in \eqref{eqn:orig} reduces to a deterministic control problem.

In this paper, we analyze the problem in \eqref{eqn:orig} by reformulating the problem into an extensive-form multistage stochastic program under the finite support assumption on $\bxi$ (see Appendix \ref{apx:ext} for details). Unless $\bxi$ has finite support, the finite-dimensional scenario tree cannot be generated, and the extensive form of \eqref{eqn:orig} cannot be obtained. In this case, a sample average approximation strategy can be applied to formulate an approximate problem \cite{kleywegt2002sample,shapiro2006complexity}. In principle, the extensive problem can be solved to optimality, but solving it to optimality is notoriously difficult in most cases because the problem size grows exponentially in $T$ (assuming a fixed number of scenarios per stage). Thus, approximately solving \eqref{eqn:orig} via \red{SMPC} is of interest.


\begin{remark}
  The nature of sequential decision-making under uncertainty prohibits formulating the problem~as a here-and-now or anticipative problem (see \cite{birge2011introduction} for the introduction). The here-and-now formulation enforces $u_t$ are fixed (i.e.,  not dependent on $\bxi_{0:t}$); that is, taking recourse decisions  is not allowed. The here-and-now formulation is often adopted by various \red{SMPC} techniques (see \cite{mesbah2016stochastic} for an overview). While these methods may be able to stabilize the system and achieve respectable performance, their performance is bound to be suboptimal due to the absence of recourse. In contrast, the anticipative formulation allows $u_t$ to be dependent on the full uncertainty $\bxi$. It assumes the availability of the perfect information of $\bxi$ at time $0$. Such a policy cannot be implemented in practice due to nonanticipativity. These settings result in the following expected performances:
  \begin{align*}
    J^{(HN)}(\overline\xi_0;\overline{w}_{-1})
    &\coloneqq
      \begin{aligned}[t]
        \min_{\substack{\{x_t(\cdot)\}_{t\in\cT}\\ \{u_t\}_{t\in\cT}}}\;& \mathbb{E}_{\bxi}\left[\sum_{t\in\cT} \ell(x_t(\bxi_{0:t}),u_t;\xi_{t})\;\middle|\; \xi_0 = \overline{\xi}_0\right]\\
        \st\;& x_0(\overline\xi_{0}) = f(\overline{w}_{-1}; \overline\xi_0)\\
        &x_{t}(\overline\bxi_{0:t})= f(x_{t-1}(\overline\bxi_{0:t-1}),u_{t-1};\overline\xi_{t}),\; \forall t\in\cT_{1:T},\; \overline\bxi_{0:t} \in\bXi_{0:t}(\overline{\xi_0})
      \end{aligned}
    \\
    J^{(AN)}(\overline\xi_0;\overline{w}_{-1})&\coloneqq\mathbb{E}_{\bxi}\left[
                                                \begin{aligned}
                                                  \min_{\{w_t\}_{t\in\cT}}\;& \sum_{t\in\cT} \ell(w_t;\xi_{t}),\\
                                                  \st\;& x_{t}= f(w_{t-1};\xi_{t}),\;  \forall t\in\cT
                                                \end{aligned}
                                                         \;\middle|\; \xi_0 = \overline{\xi}_0,\; w_{-1}=\overline{w}_{-1}\right].
  \end{align*}
  Here, $J^{(HN)}(\overline\xi_0;\overline{w}_{-1})$ and $J^{(AN)}(\overline\xi_0;\overline{w}_{-1})$ denote the expected performance of here-and-now and anticipative policies, respectively. Since the anticipative policy has more flexibility, and here-and-now has less flexibility compared to \eqref{eqn:orig}, we have
  $$J^{(AN)}(\overline\xi_0;\overline{w}_{-1})\leq J^\star(\overline\xi_0;\overline{w}_{-1})\leq J^{(HN)}(\overline\xi_0;\overline{w}_{-1}).$$ Rigorously, the first inequality is due to \cite[(5.22)]{Shapiro2021Lectures}, and the second inequality is due to the fact that the feasible set of the here-and-now problem belongs to \eqref{eqn:orig-con}. 
\end{remark}

\red{
\begin{remark}
The stochastic control problem described in \eqref{eqn:orig} can exhibit a deterministic equivalence when the uncertainties are purely additive; that is, when $A_t$, $B_t$, $Q_t$, $R_t$ are deterministic.~This equivalence stems from the fact that the optimal control policy can be expressed as an affine~function of the realized uncertainties.
Consequently, in the additive uncertainty setting, the affine parameterization approach proposed in \cite{goulartInputtostateStabilityRobust2008} can be employed to compute the control policy in a tractable manner. This approach simplifies both the problem formulation and the computation by eliminating the need for scenario tree formulations. However, it cannot be applied to problems involving multiplicative uncertainties.
\end{remark}

}

\red{
\begin{remark}
One limitation of our analysis is the assumption that the exact distribution of uncertainties is known. In practice, the true distribution of these uncertainties may remain unknown,~highlighting the importance of incorporating distributional robustness into the control policy.
Recent work in a different context has demonstrated stability in expectation within the stochastic MPC~framework when the Wasserstein distance between the true and nominal distributions is bounded \cite{mcallisterInherentDistributionalRobustness2024}.~Nevertheless, the dynamic regret analysis regarding the distributional robustness property remains an open question.
While this topic is beyond the scope of this paper, it presents an intriguing direction for future research.
\end{remark}
}

\subsection{Stochastic Model Predictive Control}

In this section, we introduce the stochastic model predictive control (\red{SMPC}) approach for approximately solving \eqref{eqn:orig}. First, we consider a truncated version of Problem \eqref{eqn:orig}:
\begin{subequations}\label{eqn:spc}
  \begin{align}
    J^{(\tau,W)}(\overline{\bxi}_{0:\tau};\overline{w}_{\tau-1})
    \coloneqq
    \min_{\{w_t(\cdot)\}_{t\in\cT_{\tau:\tau+W}}}\;
    & \mathbb{E}_{\bxi}\left[\sum_{t\in\cT_{\tau:\tau+W}}\ell(w_t(\bxi_{0:t});\xi_t)\;\middle|\; \bxi_{0:\tau}= \overline{\bxi}_{0:\tau}\right]\\
    \st\;
    & x_\tau(\overline{\bxi}_{0:\tau}) = f (\overline{w}_{\tau-1};\overline\xi_{\tau}) \label{eqn:spc-initial}\\
    & x_t({\overline\bxi}_{0:t}) = f(w_{t-1}({\overline\bxi}_{0:t-1}); {\overline\xi}_{t}),\; \forall t\in\cT_{\tau+1:\tau+W},\; {\overline\bxi}_{0:t}\in\bXi_{0:t}(\overline{\bxi}_{0:\tau}). \label{eqn:spc-con}
  \end{align}
\end{subequations}
Here, we are at time $\tau$ and assume that $\overline{w}_{\tau-1}\in\mathbb{R}^{n_x}\times \mathbb{R}^{n_u}$ and $\overline{\bxi}_{0:\tau}\in\bXi_{0:\tau}(\overline\xi_0)$ are given; and $w_t:~\bXi_{0:t}(\overline{\bxi}_{0:\tau})\rightarrow \mathbb{R}^{n_x}\times \mathbb{R}^{n_u}$. Problem \eqref{eqn:spc} aims to find a sequence of optimal decision functions in the next $W$ stages that minimizes the conditional expectation of the performance index over the next $W$ stages, given the uncertainties observed up to that time point and subject to the dynamic constraints. Note that if $\tau> T-W$, the effective horizon length is shorter than $W$ (recall the definition of $\cT_{a:b}$).

Let $w^{(\tau,W,\overline\bxi_{0:\tau})}_t (\cdot;\overline{w}_{\tau-1}):\bXi_{0:t}(\overline\bxi_{0:\tau})\rightarrow  \mathbb{R}^{n_x}\times \mathbb{R}^{n_u}$ for  $t\in\cT_{\tau:\tau+W}$ be the solution of \eqref{eqn:spc} (the existence and uniqueness of the solution will be discussed in Section \ref{sec:perturb}). We observe that the domain $\bXi_{0:t}(\overline \bxi_{0:\tau})$ for different $\overline{\bxi}_{0:\tau}\in\bXi_{0:\tau}(\overline\xi_0)$ is disjoint with each other. Thus, we can accordingly define a {\it composite} solution mapping $w^{(\tau,W)}_t (\cdot;\overline{w}_{\tau-1}) :\bXi_{0:t}(\overline\xi_0)\rightarrow  \mathbb{R}^{n_x}\times \mathbb{R}^{n_u} $ for $t\in\cT_{\tau:\tau+W}$ such that
\begin{align*}
  w^{(\tau,W)}_t (\overline{\bxi}_{0:t};\overline{w}_{\tau-1})  = w^{(\tau,W,\overline{\bxi}_{0:\tau})}_t (\overline{\bxi}_{0:t};\overline{w}_{\tau-1}),\quad \forall \overline{\bxi}_{0:t} \in \bXi_{0:t}(\overline\xi_0).
\end{align*}
This is simply achieved by taking the disjoint unions of the domains and preserving the mapping on each domain. For convenience, we will refer to $\{w^{(\tau,W)}_t (\cdot;\overline{w}_{\tau-1})\}_{t\in\cT_{\tau:\tau+W}}$ as the solution of \eqref{eqn:spc} for varying $\overline{\bxi}_{0:\tau}$.

The \red{closed-loop \red{SMPC} policy} is defined recursively by
\begin{equation}\label{eqn:spc-pol}
  w^{\red{(\textrm{cl},W)}}_t(\bxi_{0:t};\overline{w}_{-1}) \coloneqq w^{(t,W)}_t(\bxi_{0:t};w^{\red{(\textrm{cl},W)}}_{t-1}(\bxi_{0:t-1};\overline{w}_{-1})),\quad t\in\cT,
\end{equation}
where $w^{\red{(\textrm{cl},W)}}_{-1}(\bxi_{0:-1};\overline{w}_{-1})=\overline{w}_{-1}$. The policy in \eqref{eqn:spc-pol} can be explained as follows. At time $t$, with the previous state and control $w_{t-1}^{\red{(\textrm{cl},W)}}(\overline{\bxi}_{0:t-1};\overline{w}_{-1})$ and the newly realized uncertainty $\overline{\xi}_t$ on hand, we look ahead $W$ stages by generating every possible realization of $\bxi_{t:t+W}$ given $\bxi_{0:t}=\overline{\bxi}_{0:t}$, and solve the truncated problem formulated in \eqref{eqn:spc}. Then, we obtain the~first step decision $u_{t}^{\red{(\textrm{cl},W)}}(\overline\bxi_{0:t};\overline{w}_{-1})$, with the state simply given by $x_{t}^{\red{(\textrm{cl},W)}}(\overline\bxi_{0:t};\overline{w}_{-1}) \stackrel{\eqref{eqn:spc-initial}}{=}f(w_{t-1}^{\red{(\textrm{cl},W)}}(\overline\bxi_{0:t-1};\overline{w}_{-1}); \xi_t)$, and we proceed to time $t+1$. Thus, only the first decision obtained from \eqref{eqn:spc} is actuated, and in the next stage, the decisions are reoptimized with the shifted horizon and the newly realized uncertainty. The decision at $t$ is dependent on the full history of past uncertainties ${\bxi}_{0:t}$, as it is used to define the conditional distribution of the future uncertainties over the prediction window. The \red{SMPC} scheme is illustrated in Figure \ref{fig:spc}.

\begin{figure}[t]
  \centering
  \begin{tikzpicture}[font=\footnotesize]
    \def\L{1.2}
    \foreach \x in {-6,...,0}{
      \node[circle,fill=black,scale=.3] at (\L*\x,-1.5) {};
    };
    \foreach \x in {2,...,6}{
      \node[circle,fill=black,scale=.3] at (\L*\x,-1.5) {};
    };
    \draw (\L*-6,-1.5) -- (\L*.5,-1.5);
    \draw (\L*1.5,-1.5) -- (\L*6,-1.5);
    \def\x{0.35}
    \foreach \l/\y/\w/\z in {0/0/0/black,1/1/1/black,2/2/2/black,T-W/8/3/black}{
      \fill[color=\z,fill opacity=.2] (-6*\L+\y*\L,-1.5-\x*\w) rectangle + (4*\L,-\x)  node [midway,black,opacity=1] {Prediction window at $t=\l$};
    }
    \draw[<->] (-4*\L,-1.5-\x*3.5) -- node[below] {$W$} (0,-1.5-\x*3.5);
    \node at (1*\L,-1.5-\x*3.5) {$\cdots$};
    \node at (1*\L,-1.5) {$\cdots$};
    \node at (-\L*6,-1.25) {$t=0$};
    \node at (\L*6,-1.25) {$t=T$};
  \end{tikzpicture}
  \caption{Schematic of stochastic model predictive control}
  \label{fig:spc}
\end{figure}
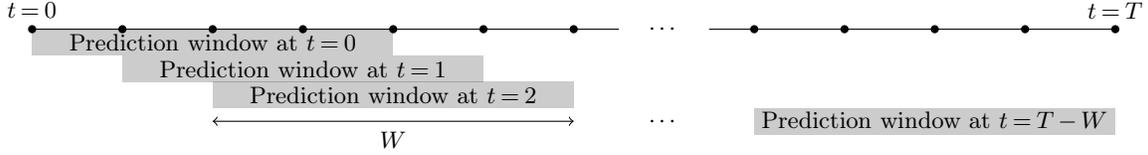

Now we can define the performance index of the \red{SMPC} policy:
\begin{equation}\label{eqn:J-spc}
  J^{(W)}(\overline\xi_0;\overline{w}_{-1})\coloneqq \mathbb{E}_{\bxi}\left[\sum_{t\in\cT} \ell(w^{\red{(\textrm{cl},W)}}_t(\bxi_{0:t};\overline{w}_{-1});\xi_{t}) \;\middle|\;\xi_{0}=\overline{\xi}_0\right].
\end{equation}
Further, the dynamic regret can be defined as follows.
\begin{align*}
  \text{Dynamic regret}\coloneqq J^{(W)}(\overline\xi_0;\overline{w}_{-1})-J^\star(\overline\xi_0;\overline{w}_{-1})
\end{align*}


When formulated as a finite-dimensional multistage stochastic program (see Appendix \ref{apx:ext}), Problem \eqref{eqn:spc} with small $W$ can be significantly smaller than the corresponding formulation of the full problem. The policy in \eqref{eqn:spc-pol} is certainly suboptimal, but we will show in Section \ref{sec:perf} that dynamic regret becomes exponentially small in the prediction horizon length $W$, and thus near-optimal performance can be achieved with moderate $W$.


\section{Main Results}\label{sec:perf}

In this section we establish the performance guarantee of \red{SMPC}. We start by stating the main assumptions: finite support, stabilizability, and detectability. Under these assumptions, we establish the perturbation bound of the open-loop \red{SMPC} policy. This result says that the effect of perturbation decays exponentially in time, which implies that the far-future stages have a small impact on the actuated decisions. The exponential input-to-state stability in expectation (EISSE) of the optimal policy is further obtained~from this result. Moreover, based on the exponential decay result of the open-loop policy, we prove the exponential decay for the closed-loop policy, which finally leads to the EISSE and the performance guarantee of \red{SMPC} policy. Specifically, we show that the dynamic regret of \red{SMPC} policy is exponentially small in the prediction horizon length.

\subsection{Main Assumptions}

We have two main assumptions. The first imposes the condition on the distribution of $\bxi$.

\begin{assumption}\label{ass:fin}
  The distribution of the discrete-time stochastic process $\bxi$ has finite support.
\end{assumption}

This assumption allows for the construction of a finite-dimensional scenario tree that completely describes the evolution of the stochastic process that drives the system dynamics. Thus, we can cast Problem \eqref{eqn:orig} as a finite-dimensional optimization problem (quadratic program, in particular). If the support is infinite (e.g., the uncertainty has a continuous distribution), one needs to apply the sample-average approximation strategy to construct a sampled scenario tree. It is well known that for a sufficiently large number of samples, the solution of the sample-average approximated problem can be arbitrarily close to the true optimal solution with a high probability \cite{shapiro2006complexity}.

We now state the second assumption: stabilizability and detectability.
First, we introduce the notion of stability, stabilizability, and detectability for deterministic settings.
These concepts are commonly used in control theory, but we reintroduce them to write out the associated constants explicitly.

\begin{definition}\label{def:stand}
  Given $L>0$ and $\alpha\in(0,1)$, we define the following.
  \begin{enumerate}[label=(\alph*), leftmargin=1.5em, labelsep = 0.25em]
  \item\label{def:stand-stab} (Stability) A square matrix $\Phi$ is $(L,\alpha)$-stable if $\|\Phi^t\|\leq L \alpha^{t}$ for any $t\in\mathbb{I}_{\geq 0}$.
  \item\label{def:stand-stabil} (Stabilizability) A matrix pair $(A,B)$ is $(L,\alpha)$-stabilizable if $\exists L$-bounded $K$ s.t. $A-BK$ is $(L,\alpha)$-stable.
  \item\label{def:stand-detect} (Detectability) A matrix pair $(A,C)$ is $(L,\alpha)$-detectable if $\exists L$-bounded $K$ s.t. $A-KC$ is $(L,\alpha)$-stable. 
  \end{enumerate}
\end{definition}

We note that the stabilizability and detectability concepts relax the controllability and observability concepts, respectively \cite{anderson2007optimal}. We now adapt these to the stochastic setting.

\begin{definition}\label{def:stoch}
  Given $\xi_0=\overline{\xi}_0$, $L>0$, and $\alpha\in(0,1)$,  we define the following.
  \begin{enumerate}[label=(\alph*), leftmargin=1.5em, labelsep = 0.25em]
  \item\label{def:stoch-stab} (Stability) The square random matrices $\{\Phi_t(\bxi_{0:t})\}_{t\in\cT_{1:T}}$ is $(L,\alpha)$-stable if $\|\prod_{t=t'+1}^{t''}\Phi_t(\bxi_{0:t})\|\leq L\alpha^{t''-t'}$ almost surely (a.s.) for all $t',t''\in\cT$ with $t'< t''$.

  \item\label{def:stoch-stabil} (Stabilizability) The random matrices pair $(\{A(\xi_t)\}_{t\in\cT_{1:T}},\{B(\xi_t)\}_{t\in\cT_{1:T}})$ is $(L,\alpha)$-stabilizable if $\exists L$-bounded (a.s.) $\{K_t(\bxi_{0:t})\}_{t\in\cT_{0:T-1}}$ such that $\{A(\xi_t)-B(\xi_t)K_{t-1}(\bxi_{0:t-1})\}_{t\in\cT_{1:T}}$ is $(L,\alpha)$-stable.

  \item\label{def:stoch-detect} (Detectability) The random matrices pair $(\{A(\xi_t)\}_{t\in\cT_{1:T}},\{C(\xi_t)\}_{t\in\cT_{0:T-1}})$ is $(L,\alpha)$-detectable if $\exists L$-bounded (a.s.) $\{K_t(\bxi_{0:t})\}_{t\in\cT_{1:T}}$ such that $\{A(\xi_t)-K_{t}(\bxi_{0:t})C(\xi_{t-1})\}_{t\in\cT_{1:T}}$ is $(L,\alpha)$-stable.
  \end{enumerate}
\end{definition}

Consider a system~$x_t=\Phi_t(\bxi_{0:t})x_{t-1}$, where $\Phi_t(\bxi_{0:t})$ denotes the state transition mapping. The stability condition in Definition \ref{def:stoch}\ref{def:stoch-stab} states that, for any sequence of realizations, the~product of state transition mappings $\Phi_{t''}(\bxi_{0:t''})\cdots \Phi_{t'+2}(\bxi_{0:t'+2})\Phi_{t'+1}(\bxi_{0:t'+1})$ decays exponentially in $t''-t'$. In~other words, the system $x_t=\Phi_t(\bxi_{0:t})x_{t-1}$ converges to zero a.s. Similarly, the stabilizability assumes that for any possible realization of the system matrices, there exists a sequence of state feedback matrices $K_t(\bxi_{0:t})$ that exponentially stabilizes the system. We emphasize that $K_t(\cdot)$ depends only on $\bxi_{0:t}$; that is, we require the system to be stabilizable (detectable) without using future information. Thus, assuming stabilizability and detectability in Definition \ref{def:stoch} is not contradictory to the nonanticipative nature of our stochastic system setting.

A natural question here is: {\it For a given system $(\{A(\xi_t)\}_{t\in\cT_{1:T}},\{B(\xi_t)\}_{t\in\cT_{1:T}})$, can one verify that a stabilizing state feedback sequence $\{K_t(\bxi_{0:t})\}_{t\in\cT_{0:T-1}}$ exists?} One way to empirically verify the argument is as follows: if there already exists a deterministic predictive or feedback controller that stabilizes the system in the face of uncertainty, it can be deduced that the underlying stochastic system is also stabilizable. This observation is relevant for many practical dynamical systems that are affected by uncertainties, which often arise in the context of conventional process control. Alternatively, one can take a rigorous approach. We show in the next proposition that if the stochastic system is sufficiently close to a {\it deterministically} stable/stabilizable/detectable system, the stochastic system is also stable/stabilizable/detectable.

\begin{proposition}\label{prop:just}

  Under Assumption \ref{ass:fin}, the following hold for any $L\geq 1$, $\alpha\in(0,1)$, and $\Delta \coloneqq (\alpha^{1/2}-\alpha)/L$.
  \begin{enumerate}[label=(\alph*), leftmargin=1.5em, labelsep = 0.25em]
  \item\label{prop:just-stab} $\{\Phi_t(\bxi_{0:t})\}_{t\in\cT_{1:T}}$ is $(L,\alpha^{1/2})$-stable if $\Phi$ is $(L,\alpha)$-stable and $\|\Phi - \Phi_t(\bxi_{0:t})\|\leq \Delta$ a.s. for $t\in\cT_{1:T}$.

  \item\label{prop:just-stabil} $(\{A(\xi_t)\}_{t\in\cT_{1:T}},\{B(\xi_t)\}_{t\in\cT_{1:T}})$ is $(L,\alpha^{1/2})$-stabilizable if $(A,B)$ is $(L,\alpha)$-stabilizable, $\|A-A(\xi_t)\|\leq \Delta/2$, and $\|B-B(\xi_t)\|\leq \Delta/2L$ a.s. for $t\in\cT_{1:T}$.

  \item\label{prop:just-detect} $(\{A(\xi_t)\}_{t\in\cT_{1:T}},\{C(\xi_t)\}_{t\in\cT_{0:T-1}})$ is $(L,\alpha^{1/2})$-detectable if $(A,C)$ is $(L,\alpha)$-detectable, $\|A-A(\xi_t)\|\leq \Delta/2$ a.s. for $t\in\cT_{1:T}$, and $\|C-C(\xi_t)\|\leq \Delta/2L$ a.s.  for $t\in\cT_{0:T-1}$.
  \end{enumerate}

\end{proposition}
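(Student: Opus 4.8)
The plan is to reduce parts \ref{prop:just-stabil} and \ref{prop:just-detect} to part \ref{prop:just-stab}, and then prove \ref{prop:just-stab} by a discrete variation-of-constants representation followed by a discrete Gronwall inequality, exploiting the precise calibration of $\Delta$.

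First I would dispatch parts \ref{prop:just-stabil} and \ref{prop:just-detect} by choosing the \emph{constant} feedback $K_t(\bxi_{0:t}) \equiv K$, where $K$ is the $L$-bounded matrix that deterministically stabilizes $(A,B)$ (resp.\ detects $(A,C)$). For stabilizability, set $\Phi \coloneqq A - BK$, which is $(L,\alpha)$-stable by hypothesis, and $\Phi_t(\bxi_{0:t}) \coloneqq A(\xi_t) - B(\xi_t)K$. The triangle inequality together with $\|A-A(\xi_t)\|\leq \Delta/2$, $\|B-B(\xi_t)\|\leq \Delta/(2L)$, and $\|K\|\leq L$ gives $\|\Phi - \Phi_t(\bxi_{0:t})\| \leq \Delta/2 + (\Delta/(2L))\cdot L = \Delta$ a.s., so part \ref{prop:just-stab} applies and the constant $K$ is trivially $\bxi_{0:t}$-measurable and $L$-bounded. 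The detectability case is identical with $\Phi \coloneqq A - KC$ and $\Phi_t(\bxi_{0:t}) \coloneqq A(\xi_t) - K C(\xi_{t-1})$, using $\|C - C(\xi_{t-1})\| \leq \Delta/(2L)$ (the index shift matching the assumption range $\cT_{0:T-1}$).

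The heart of the argument is part \ref{prop:just-stab}. Fix $t'<t''$ with $n = t''-t'$ and set $\Psi_k \coloneqq \prod_{t=t'+1}^{t'+k}\Phi_t(\bxi_{0:t})$ for $0\leq k\leq n$, with $\Psi_0 = \bI$. Writing $\Phi_t(\bxi_{0:t}) = \Phi + E_t$ where $\|E_t\|\leq \Delta$, the recursion $\Psi_k = \Phi\Psi_{k-1} + E_{t'+k}\Psi_{k-1}$ and the discrete variation-of-constants formula give $\Psi_k = \Phi^k + \sum_{s=1}^{k}\Phi^{k-s}E_{t'+s}\Psi_{s-1}$. Taking norms and using $(L,\alpha)$-stability of $\Phi$ yields $\|\Psi_k\| \leq L\alpha^k + \sum_{s=1}^{k}L\alpha^{k-s}\Delta\|\Psi_{s-1}\|$. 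Normalizing $b_k \coloneqq \alpha^{-k}\|\Psi_k\|$ reduces this to the discrete Gronwall inequality $b_k \leq L + (L\Delta/\alpha)\sum_{s=0}^{k-1}b_s$, whose solution (by induction, using $b_0 = 1 \leq L$ since $L\geq 1$) is $b_k \leq L(1 + L\Delta/\alpha)^k$. Hence $\|\Psi_k\| \leq L(\alpha + L\Delta)^k$.

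The final step is where the choice $\Delta = (\alpha^{1/2}-\alpha)/L$ does its work: it makes $\alpha + L\Delta = \alpha^{1/2}$ exactly, so $\|\Psi_n\| \leq L(\alpha^{1/2})^n = L\alpha^{(t''-t')/2}$, which is precisely the claimed $(L,\alpha^{1/2})$-stability and holds a.s.\ for every realization. I expect the only delicate point to be the bookkeeping in the Gronwall step—obtaining explicit constants rather than a non-constructive ``equivalent-norm'' contraction bound—but the variation-of-constants representation makes this routine, and the exact cancellation $\alpha + L\Delta = \alpha^{1/2}$ confirms that $\Delta$ has been tuned for exactly this purpose.
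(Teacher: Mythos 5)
Your proof is correct, and its overall architecture matches the paper's: both reduce parts \ref{prop:just-stabil} and \ref{prop:just-detect} to part \ref{prop:just-stab} by freezing the deterministic feedback $K$ and applying the triangle inequality $\|\Phi-\Phi_t(\bxi_{0:t})\|\leq \Delta/2+(\Delta/2L)\cdot L=\Delta$, exactly as in the paper. Where you differ is the engine behind part \ref{prop:just-stab}. The paper proves a helper lemma (Lemma \ref{lem:help-3}) by expanding $\prod_t\{(\Phi'_t-\Phi_t)+\Phi_t\}$ combinatorially over all subsets of perturbed positions, bounding the term with $\tau$ perturbations by $\binom{t''-t'}{\tau}\Delta^\tau L^{\tau+1}\alpha^{t''-t'-\tau}$, and summing via the binomial theorem; you instead unroll the recursion $\Psi_k=\Phi\Psi_{k-1}+E_{t'+k}\Psi_{k-1}$ into a discrete variation-of-constants formula and close the estimate with a discrete Gronwall induction. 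Both routes land on the identical quantity $L(\alpha+L\Delta)^{t''-t'}$, and both exploit the same calibration $\alpha+L\Delta=\alpha^{1/2}$, so they are two derivations of one bound; your Gronwall step is verified correctly (the base case needs $L\geq 1$, which you use). Two things the paper's formulation buys that yours does not: its lemma is stated for an arbitrary submultiplicative norm and for a \emph{time-varying} unperturbed sequence satisfying subinterval product bounds, and this extra generality is reused later (in the proof of Theorem \ref{thm:stab}, with the probability-scaled norm $\|\cdot\|_{\bpi}$ and the sequences $\underline\bS^{(W)}$ versus $\underline\bS^{(\infty)}$), whereas your argument as written relies on powers $\Phi^{k-s}$ of a constant matrix. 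That is entirely sufficient for Proposition \ref{prop:just}, but it would need the obvious modification (replace powers by subinterval products) to serve the later closed-loop analysis.
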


The proof is deferred to Appendix \ref{apx:just}. The sketch of the proof is as follows. To show Proposition \ref{prop:just}\ref{prop:just-stab}, we observe that the stability margin $1-sr(\Phi)$ of the deterministic system is uniformly bounded below, where $sr(\cdot)$ denotes the spectral radius. This fact implies that the system can endure a certain degree of deviation while remaining stable. We show that, for any possible sequence of realizations, if the deviation is sufficiently small, the product of the state transition mappings still enjoys exponential decay, which directly leads to stability. Proposition \ref{prop:just}\ref{prop:just-stabil} follows from the fact that the deterministically stabilizing feedback allows for making the closed-loop system sufficiently close to a deterministically stable system. Proposition \ref{prop:just}\ref{prop:just-detect} can be proved in a similar manner.

We are now ready to state the second main assumption:

\begin{assumption}\label{ass:main}
  There exist $L\geq 1$, $\alpha\in(0,1)$, and $\gamma\in(0,1]$ such that
  \begin{enumerate}[label=(\alph*), leftmargin=1.5em, labelsep = 0.25em]
  \item\label{ass:main-bdd} $\{A(\xi_t)\}_{t\in\cT},\{B(\xi_t)\}_{t\in\cT},\{Q(\xi_t)\}_{t\in\cT},\{R(\xi_t)\}_{t\in\cT}$ are $L$-bounded a.s.

  \item\label{ass:main-cvx} $\{Q(\xi_t)\}_{t\in\cT}$ are positive semi-definite a.s., and $\{R(\xi_t)\}_{t\in\cT}$ are $\gamma$-positive definite a.s.

  \item\label{ass:main-stab} $(\{A(\xi_t)\}_{t\in\cT_{1:T}},\{B(\xi_t)\}_{t\in\cT_{1:T}})$ is $(L,\alpha)$-stabilizable.

  \item\label{ass:main-detect} $(\{A(\xi_t)\}_{t\in\cT_{1:T}},\{Q(\xi_t)^{1/2}\}_{t\in\cT_{0:T-1}})$ is $(L,\alpha)$-detectable.
  \end{enumerate}
\end{assumption}

Here, to simplify the notation, we use common constants for different matrices (e.g., $\{A(\xi_t)\}_{t\in\cT}$ and~$\{B(\xi_t)\}_{t\in\cT}$ are $L$-bounded a.s.), rather than introducing constants for each bound (e.g., $\{A(\xi_t)\}_{t\in\cT}$ is $L_A$-bounded a.s. and $\{B(\xi_t)\}_{t\in\cT}$ is $L_B$-bounded a.s.). In particular, we consistently use $L$ for upper bounds, $\gamma$ for strictly positive lower bounds, and $\alpha$ for the upper bounds that are strictly less than $1$. We also emphasize that the requirements of $L\geq 1$ and $\gamma\in(0,1]$ are only for simplifying the presentation. When we only have $L,\gamma>0$, our results still hold by letting $L\leftarrow L\vee1$ and $\gamma\leftarrow\gamma\wedge 1$. We also note that, for deterministic problems, $Q\succeq \bzero$, $R\succ \bzero$, $(A,B)$ stabilizability, $(A,Q^{1/2})$ detectability are standard assumptions imposed in the control literature \cite{anderson2007optimal,rawlings2017model}. Therefore, we do not impose extra assumptions on the system property for studying stochastic problems, but just generalize the standard assumptions from deterministic settings. Our results~will~be~expressed in terms of the constants in Assumption \ref{ass:main}, i.e., $L$, $\alpha$, and $\gamma$.

\red{
\begin{remark}
One might consider a set of assumptions weaker than the conditions outlined in Definition \ref{def:stoch} and Assumption \ref{ass:main}, such as stabilizability in expectation.
However, as noted in other literature, stabilizability-like conditions must be imposed in a robust sense (cf. \cite[Assumption 4]{mcallisterNonlinearStochasticModel2023}) to enable the analysis.
\end{remark}
}

\subsection{Perturbation Analysis}\label{sec:perturb}

We now perform a perturbation analysis for the \red{SMPC} policy. The following theorem establishes the~existence and uniqueness of the solution of \eqref{eqn:spc}, and the perturbation bound of open-loop policy $\{w^{(\tau,W)}_t(\cdot;\overline w_{\tau-1})\}_{t\in\cT_{\tau:\tau+W}}$ with respect to the perturbation in the additive uncertainty $\{p(\xi_{t})\}_{t\in\cT}$. Recall that $p(\xi_t)\coloneqq[q(\xi_t);r(\xi_t);d(\xi_t)]$, and $q(\xi_t)$ and $r(\xi_t)$ are the perturbation in the objective, whereas $d(\xi_t)$ is the perturbation in the constraints.

\begin{theorem}[Perturbation Bound (Open-Loop)]\label{thm:decay}
  Under Assumptions \ref{ass:fin} and \ref{ass:main} and given $\overline{w}_{\tau-1}\in\mathbb{R}^{n_x}\times\mathbb{R}^{n_u}$ and $\overline{\xi}_0\in\Xi_0$, there exists a unique solution $\{w^{(\tau,W)}_t(\cdot;\overline{w}_{\tau-1})\}_{t\in\cT_{\tau:\tau+W}}$ of \eqref{eqn:spc} for all $\tau\in\cT$ and $W\geq 0$. Furthermore, for all $t\in\cT_{\tau:\tau+W}$ and $\overline{\bxi}_{0:\tau}\in\bXi_{0:\tau}(\overline\xi_0)$, we have
  \begin{multline*}
    \left\{\mathbb{E}_{\bxi}\left[\left\|w^{(\tau,W)}_t (\bxi_{0:t};\overline{w}_{\tau-1})\right\|^2\;\middle|\; \bxi_{0:\tau}=\overline{\bxi}_{0:\tau}\right]\right\}^{1/2}\\
    \leq c_1 \left(2L\rho^{t-\tau} \|\overline{w}_{\tau-1}\| + \sum_{t'\in\cT_{\tau:\tau+W}} \rho^{|t-t'|}\left\{\mathbb{E}_{\bxi}\left[\| p(\xi_{t'})\|^2 \;\middle|\; \bxi_{0:\tau}=\overline{\bxi}_{0:\tau}\right]\right\}^{1/2}\right),
  \end{multline*}
  where
  \begin{align}\label{eqn:constants}
    \rho & \coloneqq \left(\dfrac{L_{\tbH}^2-\gamma_{\tbH}^2}{L_{\tbH}^2+\gamma_{\tbH}^2}\right)^{1/2}, \quad c_1\coloneqq
           \frac{L_{\tbH}}{\gamma_{\tbH}^2 \rho},\quad L_{\tbH}  \coloneqq 2L+1, \quad  \gamma_{\tbF}\coloneqq\frac{(1-\alpha)^2}{(1+L)^2L^2},\quad
           \gamma_{\tbG}\coloneqq\frac{\gamma(1-\alpha)^2}{2(1+L)^2L^4}, \\
    \gamma_{\tbH} & \coloneqq
                    \left(\frac{2}{\gamma_{\tbG}} + \left(1+\frac{4L_{\tbH}}{\gamma_{\tbG}}+\frac{4L^2_{\tbH}}{\gamma_{\tbG}^2} \right)\frac{L_{\tbH}(1+\overline{\mu}L_{\tbH})}{\gamma_{\tbF}} + \overline{\mu}\right)^{-1},\quad
                    \overline{\mu}:= \frac{2L^2_{\tbH}/\gamma_{\tbG} + \gamma_{\tbG}+  L_{\tbH}}{\gamma_{\tbF}}.\nonumber
  \end{align}
\end{theorem}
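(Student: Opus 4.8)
The plan is to reduce the truncated stochastic program \eqref{eqn:spc} to a finite-dimensional, graph-structured quadratic program and then invoke an exponential-decay bound for the inverse of its first-order-optimality operator. First I would use Assumption \ref{ass:fin} to expand \eqref{eqn:spc} into its extensive form over the scenario tree (Appendix \ref{apx:ext}): the decision variables $\bw$ collect $w_t(\cdot)$ over all tree nodes in $\cT_{\tau:\tau+W}$, the dynamics \eqref{eqn:spc-con} become linear equality constraints with Jacobian $\bG$, and the objective is the separable convex quadratic \eqref{eqn:pi} with block-diagonal Hessian $\bH=\diag(Q(\xi_t),R(\xi_t))$ weighted by the scenario probabilities. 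The KKT conditions form the symmetric saddle-point system $\begin{bmatrix}\bH & \bG^\top\\ \bG & \bzero\end{bmatrix}\begin{bmatrix}\bw\\\blambda\end{bmatrix}=(\text{affine data})$, whose right-hand side is affine in $\overline{w}_{\tau-1}$ and in $\{p(\xi_{t'})\}$. Crucially, this operator inherits the tree topology, so that the graph distance between the blocks indexed by times $t$ and $t'$ is exactly $|t-t'|$.

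The heart of the argument is to establish the two uniform conditioning constants that drive every quantity in \eqref{eqn:constants}. The upper bound $L_{\tbH}$ follows directly from the $L$-boundedness in Assumption \ref{ass:main}\ref{ass:main-bdd}. The difficult estimate is the uniform lower bound $\gamma_{\tbH}$ on the \emph{reduced} Hessian (the Hessian $\tbH$ obtained after condensing out the states through the dynamics), and I would obtain it in two stages. Stabilizability (Assumption \ref{ass:main}\ref{ass:main-stab}) supplies, for any constraint residual, a feasible correction whose norm is controlled by the residual; propagating the stabilizing feedback over the window and using the $(L,\alpha)$ decay yields the inf-sup / constraint-qualification constant $\gamma_{\tbF}$, quantifying how well the null space of $\bG$ is conditioned. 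Detectability of $(\{A(\xi_t)\},\{Q(\xi_t)^{1/2}\})$ (Assumption \ref{ass:main}\ref{ass:main-detect}) together with the $\gamma$-positive definiteness of $R$ (Assumption \ref{ass:main}\ref{ass:main-cvx}) then gives the coercivity estimate $\gamma_{\tbG}$: detectability guarantees that a feasible trajectory with small stage cost (small $Q^{1/2}x$, and, via $R\succeq\gamma\bI$, small $u$) must itself have small state, so the objective is uniformly strongly convex along feasible directions. Combining these two estimates and bounding the cross terms that couple the feasible correction with the coercivity argument produces the explicit lower bound $\gamma_{\tbH}$ of \eqref{eqn:constants}.

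With $L_{\tbH}$ and $\gamma_{\tbH}$ in hand, existence and uniqueness are immediate: $\gamma_{\tbH}>0$ renders the reduced Hessian positive definite, so the convex QP has a unique minimizer, while feasibility of the QP (any control sequence generates a feasible trajectory by forward simulation of \eqref{eqn:spc-con}) guarantees existence. For the perturbation bound I would then invoke the exponential-decay result for graph-structured optimization \cite{demko1977inverses,shin2022exponential,shin2021controllability}: when the Hessian is $L_{\tbH}$-bounded and the reduced Hessian is $\gamma_{\tbH}$-positive definite, the blocks of the inverse operator decay geometrically in graph distance with rate $\rho=((L_{\tbH}^2-\gamma_{\tbH}^2)/(L_{\tbH}^2+\gamma_{\tbH}^2))^{1/2}$ and prefactor $c_1=L_{\tbH}/(\gamma_{\tbH}^2\rho)$. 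Applied here, this bounds the sensitivity of $w^{(\tau,W)}_t$ to each $p(\xi_{t'})$ by $c_1\rho^{|t-t'|}$ and to the initial condition $\overline{w}_{\tau-1}$ by $2Lc_1\rho^{t-\tau}$, the factor $2L$ arising because $\overline{w}_{\tau-1}$ enters \eqref{eqn:spc-initial} only through the $L$-bounded blocks $A(\xi_\tau)$ and $B(\xi_\tau)$.

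Finally, since $w^{(\tau,W)}_t$ is affine in the data, I would assemble the stated bound by the triangle inequality, carried out in the Hilbert space $L^2(\bxi\mid\bxi_{0:\tau}=\overline{\bxi}_{0:\tau})$ so that the conditional second moments appear, applying the decay estimates term by term. The principal obstacle is the second paragraph: tracking the explicit constants through the stabilizability-based inf-sup bound and the detectability-based coercivity bound, and correctly merging them into $\gamma_{\tbH}$, is the delicate part, whereas the decay mechanism itself is furnished off the shelf by the cited graph-structured perturbation theory.
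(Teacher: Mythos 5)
Your overall skeleton is the paper's: pass to the extensive scenario-tree QP, verify a uniform regularity triple $(L_{\tbH},\gamma_{\tbF},\gamma_{\tbG})$ from boundedness, stabilizability, and detectability plus $R\succeq\gamma\bI$, invoke the graph-structured exponential-decay theorem of \cite{shin2022exponential} to obtain the rate $\rho$ and prefactor $c_1$, assemble the bound in the conditional $L^2$ space, and absorb $\overline{w}_{\tau-1}$ into the stage-$\tau$ disturbance to produce the factor $2L$. This is exactly the route taken via Lemma \ref{lem:equiv}, Lemma \ref{lem:ureg}, Theorems \ref{thm:inv}, \ref{thm:decay-ext}, \ref{thm:decay-ext-sub}, and Proposition \ref{prop:ev}.

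There is, however, a genuine gap at what you call the heart of the argument. The KKT system you propose to feed into the decay theorem---with Hessian blocks $\pi_i\diag(Q(\underline{\xi}_i),R(\underline{\xi}_i))$ ``weighted by the scenario probabilities''---does \emph{not} satisfy uniform regularity in the standard $2$-norm: along feasible directions supported near the leaves, the quadratic form is throttled by the nodal probabilities, so the reduced-Hessian lower bound is of order $\min_i\pi_i$, which vanishes exponentially in the horizon and shrinks with the branching factor. Hence the $\gamma_{\tbG}$ (and thus $\gamma_{\tbH}$, $\rho$, $c_1$) you would extract cannot be uniform constants, and the decay theorem applied to that matrix yields a vacuous bound. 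The paper resolves this by first rescaling the primal-dual variables by $\pi_i^{1/2}$, yielding the scaled problem \eqref{eqn:spcd-scaled} whose KKT blocks are $\tQ_{ii}=\underline{Q}_i$ and $\tA_{ij}=(\pi_{i\mid j})^{1/2}\underline{A}_i$; uniformity of $L_{\tbH},\gamma_{\tbF},\gamma_{\tbG}$ then holds precisely because the conditional probabilities of children sum to one (Proposition \ref{prop:scen}\ref{prop:scen-b}, exploited in Lemmas \ref{lem:basic} and \ref{lem:ureg}). The decay theorem is applied to this scaled matrix, and undoing the scaling converts standard norms into the probability-scaled norms of Definition \ref{def:norm}, which Proposition \ref{prop:ev} identifies with conditional second moments---the rigorous counterpart of your final ``work in $L^2(\bxi\mid\bxi_{0:\tau}=\overline{\bxi}_{0:\tau})$'' step. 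Without introducing this scaling \emph{before} verifying regularity, your stabilizability/detectability estimates prove bounds for constants that are not actually uniform, and the theorem's horizon- and scenario-count-independent constants cannot be recovered.
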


The proof is deferred to Appendix \ref{apx:decay}. The proof involves the reformulation of Problem \eqref{eqn:orig} into~a~finite-dimensional multistage program. This formulation enables performing the perturbation analysis in a convenient linear system form. By applying the state-of-the-art perturbation bound on the graph-structured Karush--Kuhn--Tucker system \cite{shin2022exponential} and establishing a connection between Assumption \ref{ass:main} and the uniform regularity conditions, we obtain the desired result.

Theorem \ref{thm:decay} indicates that the perturbation in the far future stages $p(\xi_{t'})$ (with $t'\gg \tau)$ has an exponentially vanishing effect on the current stage decision $w^{(\tau,W)}_\tau(\bxi_{0:\tau};\overline{w}_{\tau-1})$, and conversely the perturbation in the current stage $p(\xi_{\tau})$ has an exponentially small effect on the later stage decisions $w^{(\tau,W)}_{t'}(\bxi_{0:t'};\overline{w}_{\tau-1})$ (with $t'\gg \tau)$. Since the current stage solution is the only actuated decision in \red{SMPC}, it makes intuitive sense that \red{SMPC} can achieve high performance without taking into account far-future time stages. This observation reveals the fact that the perturbation result will play a crucial role in establishing the near-optimality of \red{SMPC} performance.

We now observe that the optimal policy $\{w^\star_t(\cdot;\overline{w}_{-1})\}_{t\in\cT}$ is the same as the open-loop policy $\{w^{(0,T)}_t(\cdot;\overline{w}_{-1})\}_{t\in\cT}$, where the prediction window length is set to the full horizon length $W=T$, and we obtain the following corollary.

\begin{corollary}[Perturbation Bound (Optimal)]\label{cor:decay}
  Under Assumptions \ref{ass:fin} and \ref{ass:main} and given $\overline{w}_{-1}\in\mathbb{R}^{n_x}\times\mathbb{R}^{n_u}$ and $\overline{\xi}_0\in\Xi_0$, there exists a unique solution $\{w^{\star}_t(\cdot;\overline{w}_{-1})\}_{t\in\cT}$ of \eqref{eqn:orig}, and the following holds for all $t\in\cT$:
  \begin{equation}\label{eqn:optimal}
    \left\{\mathbb{E}_{\bxi}\left[\left\|w^{{\star}}_t(\bxi_{0:t};\overline{w}_{-1})\right\|^2\;\middle|\; \xi_0=\overline{\xi}_0\right]\right\}^{1/2}
    \leq c_1 \left(2L\rho^{t} \|\overline{w}_{-1}\| + \sum_{t'\in\cT} \rho^{|t-t'|}\left\{\mathbb{E}_{\bxi}\left[\| p(\xi_{t'})\|^2\;\middle|\; \xi_0=\overline{\xi}_0\right]\right\}^{1/2}\right),
  \end{equation}
  where $\rho, c_1$ are given by \eqref{eqn:constants}.
\end{corollary}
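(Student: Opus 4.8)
The plan is to recognize that the full-horizon problem \eqref{eqn:orig} is nothing but the truncated problem \eqref{eqn:spc} specialized to $\tau=0$ and $W=T$, and then to invoke Theorem \ref{thm:decay} essentially verbatim. This is exactly the identification announced in the sentence preceding the corollary, namely that $\{w^\star_t(\cdot;\overline{w}_{-1})\}_{t\in\cT}$ coincides with $\{w^{(0,T)}_t(\cdot;\overline{w}_{-1})\}_{t\in\cT}$, so the entire argument reduces to making that identification precise and substituting parameters.

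First I would verify the exact coincidence of the two formulations. Setting $\tau=0$ in \eqref{eqn:spc}, the conditioning event $\bxi_{0:\tau}=\overline{\bxi}_{0:\tau}$ collapses to $\xi_0=\overline{\xi}_0$, the summation range $\cT_{\tau:\tau+W}=\cT_{0:T}$ becomes $\cT$, the initial condition \eqref{eqn:spc-initial} reads $x_0(\overline{\xi}_0)=f(\overline{w}_{-1};\overline{\xi}_0)$, and the dynamic constraint \eqref{eqn:spc-con} coincides with \eqref{eqn:orig-con}. Because $\overline{w}_{\tau-1}=\overline{w}_{-1}$ in this instance, the objective and the feasible set of \eqref{eqn:spc} match those of \eqref{eqn:orig} term by term; hence the two optimization problems are literally the same, and their solutions agree: $w^\star_t(\cdot;\overline{w}_{-1})=w^{(0,T)}_t(\cdot;\overline{w}_{-1})$ for every $t\in\cT$.

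Given this identification, existence and uniqueness of the solution of \eqref{eqn:orig} would follow immediately from the corresponding conclusion of Theorem \ref{thm:decay} applied at $\tau=0$, $W=T$. I would then obtain the perturbation bound \eqref{eqn:optimal} by writing out the bound of Theorem \ref{thm:decay} at these parameter values and simplifying the notation: the factor $\rho^{t-\tau}$ becomes $\rho^t$, the sum over $\cT_{\tau:\tau+W}$ becomes a sum over $\cT$, and every conditioning on $\bxi_{0:0}=\overline{\bxi}_{0:0}$ reduces to a conditioning on $\xi_0=\overline{\xi}_0$. The constants $\rho$ and $c_1$ would remain unchanged, since by \eqref{eqn:constants} they depend only on the data bounds $L$, $\alpha$, $\gamma$ of Assumption \ref{ass:main} and not on the choice of $\tau$ or $W$.

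Since every step is a direct substitution into an already-proven theorem, there is no genuine analytical obstacle here; the only point requiring care is the bookkeeping that confirms the objective, the initial condition, and the dynamics of \eqref{eqn:orig} are reproduced exactly by \eqref{eqn:spc} at $\tau=0$, $W=T$, so that the hypotheses of Theorem \ref{thm:decay} transfer to \eqref{eqn:orig} without any modification.
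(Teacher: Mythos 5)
Your proposal is correct and is exactly the paper's own argument: the paper derives Corollary \ref{cor:decay} by observing that $\{w^\star_t(\cdot;\overline{w}_{-1})\}_{t\in\cT}=\{w^{(0,T)}_t(\cdot;\overline{w}_{-1})\}_{t\in\cT}$ and then specializing Theorem \ref{thm:decay} to $\tau=0$, $W=T$, with the constants $c_1,\rho$ unchanged since \eqref{eqn:constants} does not depend on $\tau$ or $W$. The bookkeeping you carry out (conditioning on $\bxi_{0:0}=\overline{\bxi}_{0:0}$ reducing to $\xi_0=\overline{\xi}_0$, $\cT_{0:T}=\cT$, $\rho^{t-\tau}=\rho^t$) is precisely what the paper leaves implicit.
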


Corollary \ref{cor:decay} establishes the EISSE of the optimal policy. In particular, if we define $D = \max_{t\in\cT} \{\mathbb{E}_{\bxi}[\| p(\xi_{t})\|^2\mid\xi_{0}=\overline{\xi}_{0}]\}^{1/2}$, then \eqref{eqn:optimal} leads to
\begin{equation*}
  \left\{\mathbb{E}_{\bxi}\left[\|w^{\star}_t(\bxi_{0:t};\overline{w}_{-1})\|^2\mid\xi_{0}=\overline{\xi}_{0}\right]\right\}^{1/2}\leq c_1\rbr{2L\rho^{t} \|\overline{w}_{-1}\| + \frac{2}{1-\rho}D}, \quad\quad \forall t\in \cT . \tag{EISSE}
\end{equation*}
That is, $w^{\star}_t(\bxi_{0:t};\overline{w}_{-1})$ have uniformly bounded second moments if all uncertainties~$\{p(\xi_{t'})\}_{t'\in\cT}$ have uniformly bounded second moments. Furthermore, the second moment of $w^{\star}_t(\bxi_{0:t};\overline{w}_{-1})$ forgets the effect of the initial condition exponentially fast. Input-to-state stability (ISS) is a general notion of stability for perturbed dynamical systems \cite{rawlings2017model}, and EISSE is a generalization of ISS for stochastic systems. Our notion of EISSE is close in spirit to robust asymptotic stability in expectation, defined in \cite{mcallisterNonlinearStochasticModel2023}.
\red{
A key difference is that the EISSE condition guarantees exponential convergence, while~the~robust asymptotic stability in expectation discussed in \cite{mcallisterNonlinearStochasticModel2023} ensures only asymptotic convergence.
}

We should mention that, however, the result in Corollary \ref{cor:decay} only guarantees stability in {\it expectation}. We do not have a guarantee that \eqref{eqn:optimal} holds for every scenario. Thus,  a pathological scenario may exist in which the optimal solution is unbounded. While stability in expectation is standard in the literature \cite{mcallisterNonlinearStochasticModel2023,kouvaritakis2010explicit}, a stronger version of stability  (e.g., stable a.s. \cite{primbs2009stochastic}) may be desired. We leave the investigation of this regard to future work.

Building on the perturbation analysis of the open-loop policy, in the next theorem we analyze the perturbation bound of the closed-loop policy defined by \red{SMPC} \eqref{eqn:spc}.

\begin{theorem}[Perturbation Bound (Closed-Loop)]\label{thm:stab}
  Under Assumptions \ref{ass:fin} and \ref{ass:main}, given $\overline{w}_{-1}\in\mathbb{R}^{n_x}\times\mathbb{R}^{n_u}$, $\overline{\xi}_0\in\Xi_0$, and $W\geq \overline{W}$, the following holds for $\{w^{\red{(\textrm{cl},W)}}_t(\cdot;\overline{w}_{-1})\}_{t\in\cT}$ (defined in \eqref{eqn:spc-pol}) and for all $t\in\cT$:
  \begin{equation*}
    \left\{\mathbb{E}_{\bxi}\left[\left\|w^{\red{(\textrm{cl},W)}}_t(\bxi_{0:t};\overline{w}_{-1})\right\|^2\;\middle|\;\xi_0=\overline\xi_0\right]\right\}^{1/2}\leq c_2 \left(2L\rho^{t/2} \|\overline{w}_{-1}\| + \sum_{t'\in\cT} \rho^{|t-t'|/2}\left\{\mathbb{E}_{\bxi}\left[\| p(\xi_{t'})\|^2\;\middle|\;\xi_0=\overline\xi_0\right]\right\}^{1/2}\right),
  \end{equation*}
  where $\rho$ and $c_1$ are defined in \eqref{eqn:constants} and 
  \begin{equation}\label{eqn:constants-2}
    \overline{W}\coloneqq\frac{\log\left((\alpha^{1/2}-\alpha)/4c_1^2L^3 \right)}{2\log\rho},\quad c_2\coloneqq \frac{2c_1^2L}{\rho(1-\rho^{3/2})}.
  \end{equation}
\end{theorem}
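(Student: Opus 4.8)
The plan is to transfer the open-loop decay of Theorem~\ref{thm:decay} to the receding-horizon trajectory $\{w^{(W)}_t(\cdot;\overline{w}_{-1})\}_{t\in\cT}$ by controlling the error introduced at each re-optimization. The starting point is the SPC recursion \eqref{eqn:spc-pol}: the actuated decision $w^{(W)}_t$ is the first-stage solution of the truncated problem \eqref{eqn:spc} solved at time $t$ with initial condition $w^{(W)}_{t-1}$ and horizon $W$. Applying Theorem~\ref{thm:decay} directly at the first stage (evaluating the open-loop bound at $t=\tau$, where the factor $\rho^{t-\tau}$ equals $1$) only yields a one-step recursion with gain $2Lc_1>1$, which is not contractive; the decay must instead be extracted by looking several stages into the prediction window.

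First I would establish a \emph{consistency} relation between consecutive solves. By the principle of optimality for the extensive-form multistage program (Appendix~\ref{apx:ext}), the tail of the problem solved at time $t-1$, restricted to the subtree rooted at the realized time-$t$ node, is itself an instance of \eqref{eqn:spc} at time $t$ with horizon $W-1$ and initial condition $w^{(W)}_{t-1}$---the very same initial condition used by the solve at time $t$. Since SPC actuates $w^{(W)}_{t-1}=w^{(t-1,W)}_{t-1}$, the two solves agree on the state at $t$, so the only discrepancy between the prediction $w^{(t-1,W)}_{t}$ and the actuated value $w^{(W)}_t=w^{(t,W)}_{t}$ is the horizon, $W-1$ versus $W$. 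Extending the horizon by one stage amounts to a perturbation of the terminal structure of the problem localized $W$ stages ahead of the decision point, so by Theorem~\ref{thm:decay} the resulting first-stage error is exponentially small, of order $\rho^{W}$ in conditional $L^2$ norm, with the implied magnitude itself bounded by the open-loop decay of the predicted state at the truncation point (this is where a second factor of $c_1$ enters, matching $c_2\sim c_1^2L$).

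Next I would telescope these per-step errors. Writing $w^{(W)}_{\tau+m}-w^{(\tau,W)}_{\tau+m}=\sum_{j=1}^{m}\bigl(w^{(\tau+j,W)}_{\tau+m}-w^{(\tau+j-1,W)}_{\tau+m}\bigr)$ and applying the consistency bound to each summand expresses the deviation of the actuated trajectory from a single reference open-loop solution as a sum of $O(\rho^{W})$ contributions, each further discounted by the open-loop decay between the re-solve point and the evaluated stage. Combining this with Theorem~\ref{thm:decay} applied to the reference solve then produces a self-referential inequality for $a_t:=\{\mathbb{E}_{\bxi}[\|w^{(W)}_t\|^2\mid \xi_0=\overline{\xi}_0]\}^{1/2}$: a clean exponential envelope in $\overline{w}_{-1}$ and the disturbances, plus a residual term proportional to $\rho^{W}$ times a convolution of $\{a_{t'}\}$ against a geometric kernel.

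Finally, the threshold $W\geq\overline{W}$ of \eqref{eqn:constants-2} is exactly what absorbs this residual self-term: the condition is equivalent to $4c_1^2L^3\rho^{2W}\leq \alpha^{1/2}-\alpha$, so the $\rho^W$-weighted feedback of $\{a_{t'}\}$ into itself lies within the stability margin. Closing the inequality---in the spirit of the perturbation-of-stable-dynamics argument behind Proposition~\ref{prop:just}, which trades the rate $\alpha$ for $\alpha^{1/2}$ under a deviation budget $\Delta=(\alpha^{1/2}-\alpha)/L$---degrades the decay rate from $\rho$ to $\rho^{1/2}$ and, after summing the resulting geometric series (ratio $\rho^{3/2}$, giving the factor $1/(1-\rho^{3/2})$ in $c_2$), yields the stated bound. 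I expect the main obstacle to be the first step: rigorously quantifying the horizon-truncation sensitivity (the horizon-$W$ versus horizon-$(W-1)$ first-stage difference) in conditional $L^2$ norm via the graph-structured perturbation machinery, and then ensuring the telescoped, self-referential recursion genuinely \emph{closes}---rather than merely bounding each term---so that the $\rho^{1/2}$ rate and the explicit constant $c_2$ emerge.
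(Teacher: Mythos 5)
Your overall architecture---transfer the open-loop decay of Theorem \ref{thm:decay} to the closed loop by quantifying the error introduced at each re-optimization---is viable, and it is organized differently from the paper's proof; but its two load-bearing quantitative steps are left as acknowledged obstacles rather than filled in, and one of them is mis-attributed. The paper (Appendix \ref{apx:stab}) never telescopes trajectories over consecutive solves. It works at the operator level: the closed loop obeys the linear recursion \eqref{eqn:rec-full}, $\underline\bw^{(W)}_{\cV_t}=\underline\bS^{(W)}_{\cV_t,\cV_{t-1}}\underline\bw^{(W)}_{\cV_{t-1}}+\sum_{t'}\underline\bPsi^{(\cV_t,W)}_{\cV_t,\cV_{t'}}\underline\bp_{\cV_{t'}}$; products of the full-horizon operators $\underline\bS^{(\infty)}$ decay at rate $\rho$ because, by the principle of optimality (Lemma \ref{lem:bellman}), they coincide with the optimal open-loop solution operator (Lemma \ref{lem:help-2}); the one-step operator error $\|\underline\bS^{(W)}-\underline\bS^{(\infty)}\|_{\bpi}\le 4c_1^2L^2\rho^{2W}$ comes from a block identity for inverse KKT matrices (Lemma \ref{lem:trunc}); and the matrix-product perturbation lemma (Lemma \ref{lem:help-3}), activated by $W\ge\overline W$, yields decay of products of $\underline\bS^{(W)}$ at the degraded rate $\rho^{1/2}$, after which unrolling (Lemma \ref{lem:help-1}) and summing the geometric series of ratio $\rho^{3/2}$ produce exactly $c_2$. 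Your route instead compares consecutive solves (horizon $W$ versus $W-1$) and closes a scalar self-referential recursion; the consistency relation you invoke (the tail of the time-$(t-1)$ solve is the horizon-$(W-1)$ problem at time $t$ with the same initial condition) is correct and is the finite-horizon analogue of Lemma \ref{lem:bellman}, and your diagnosis of where $\rho^{1/2}$ and $1/(1-\rho^{3/2})$ come from is consistent with the paper.

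The gaps are concrete. First, the per-step sensitivity bound cannot come ``by Theorem \ref{thm:decay}'': that theorem bounds the size of a single solution in terms of the problem data; it says nothing about the difference between the solutions of two problems with identical data but different horizons. What you need is a Lemma \ref{lem:trunc}-type estimate, and its proof is not a corollary of the theorem statement: it relies on the identity $\tbOmega^{(k,W)}-\tbOmega^{(k,\infty)}_{\cdot,\cdot}=\tbOmega^{(k,W)}\tbH_{\cdot,\cdot}(\tbOmega^{(k,\infty)}_{\cdot,\cdot})^\top$ for the inverse KKT matrices, combined with the primal-dual operator bounds of Theorem \ref{thm:decay-ext-sub}. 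Your verbal mechanism (a boundary error of size $c_1\rho^{W}$ propagated back with another factor $c_1\rho^{W-\text{offset}}$) is the right one, but it lives at the level of the extensive-form KKT system, not of Theorem \ref{thm:decay}. Second, your telescoping identity is only defined for $m\le W$ (the solve at time $\tau$ does not reach stage $\tau+m$ otherwise), so for $t>W$ the reference solve must start at $t-W$, whose initial condition is itself a closed-loop iterate; the resulting inequality is a renewal inequality of the form $a_t\le b_t+\sum_{s=1}^{W+1}K_s\,a_{t-s}$ with a length-$(W+1)$ kernel. Closing such a recursion at rate $\rho^{1/2}$ is possible for $W$ large, but it is a genuinely different and more delicate argument than Lemma \ref{lem:help-3} (which concerns products of matrices, not scalar recursions with long-range memory), and it will not reproduce the specific threshold $\overline W$ and constant $c_2$ of \eqref{eqn:constants-2} asserted in the statement. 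Until the truncation-sensitivity lemma is actually proved and the recursion closure is carried out, the proof is incomplete.
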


The proof is deferred to Appendix \ref{apx:stab}. The sketch of the proof is as follows. Using the open-loop perturbation result in Theorem \ref{thm:decay}, we show that the open-loop \red{SMPC} policy and the optimal policy become exponentially close as the horizon length $W$ increases. Thus, for $W$ large enough, one can show that the exponential~decay in the perturbation bound holds for the closed-loop \red{SMPC} policy, but the decay rate slightly deteriorates $\rho\rightarrow \rho^{1/2}$. We note that the EISSE of \red{SMPC} can be obtained directly from Theorem \ref{thm:stab}. This result is important not only because it guarantees the EISSE of \red{SMPC}, but also because it serves as an important intermediate step for establishing the performance guarantee.

\red{
We mention that a similar property of \red{SMPC} closed-loop trajectory has been explored in recent MPC literature \cite{mcallisterStochasticRobustnessNominal2023, mcallisterNonlinearStochasticModel2023}. Specifically, the bounds on the state trajectories, akin to those in Theorem \ref{thm:stab}, are presented and referred to as robust asymptotic stability in expectation.
Rather than expressing the state trajectories as an exponential function of time, they are represented using $\mathcal{K}$- and $\mathcal{KL}$-functions, which are commonly employed in classical MPC literature.

}

\subsection{Dynamic Regret Analysis}

We now move on to the \red{dynamic regret analysis}. The next theorem establishes the near-optimal performance guarantee of \red{SMPC} by analyzing its dynamic regret.

\begin{theorem}[Dynamic Regret]\label{thm:perf}
  Under Assumptions \ref{ass:fin} and \ref{ass:main}, given $\overline{w}_{-1}\in\mathbb{R}^{n_x}\times\mathbb{R}^{n_u}$, $\overline{\xi}_0\in\Xi_0$, and $W\geq \overline{W}$, the following holds:
  \begin{equation*}
    J^{(W)}(\overline\xi_0;\overline{w}_{-1}) - J^\star(\overline\xi_0;\overline{w}_{-1}) \leq
    \left[c_5 D^2 T  + c_6 D\|\overline{w}_{-1}\|+ c_7 \|\overline{w}_{-1}\|^2\right]\rho^W
  \end{equation*}
  where $J^\star(\overline\xi_0;\overline{w}_{-1})$, $J^{(W)}(\overline\xi_0;\overline{w}_{-1})$, $(\rho, c_1), (\overline{W}, c_2)$ are defined in \eqref{eqn:orig}, \eqref{eqn:J-spc}, \eqref{eqn:constants}, \eqref{eqn:constants-2}, and 
  \begin{align}\label{eqn:cs}
    c_3 & \coloneqq4c_1^2L   \left(\frac{ 2 c_2L}{1-\rho^{1/2}}+\frac{1}{1-\rho}\right),\\\nonumber
    c_4 & \coloneqq 8c_1^2c_2L^3\\\nonumber
    c_5 & \coloneqq c_3\left(\frac{2c_2L}{1-\rho^{1/2}} +c_3 L/2 + 1\right) + \frac{2c_1^2 c_3 L^2}{1-\rho^2}\left( -1 + 2c_3 L + \frac{4}{1-\rho} + \frac{8c_2L}{1-\rho^{1/2}} \right)\\\nonumber
    c_6 & \coloneqq \frac{1}{1-\rho^{1/2}}\Bigg(\frac{2c_2c_4L}{1-\rho^{1/2}} +c_3c_4 L + c_4 + 2c_2c_3L^2\\\nonumber
        &\qquad+ \frac{2c_1^2 L^2}{1-\rho^2}  \left(-c_4 + 2c_3c_4 L + \frac{4c_4}{1-\rho} + \frac{8c_2c_4L}{1-\rho^{1/2}} + 2c_3L(4c_2L + c_4) \right)\Bigg)\\\nonumber
    c_7 & \coloneqq \frac{1}{1-\rho}\left(c_4(2c_2L^2+c_4L/2) + \frac{4c_1^2 c_4 L^3(4c_2L+c_4)}{1-\rho^2}\right)\\\nonumber
    D & \coloneqq \max_{t\in\cT} \left\{\mathbb{E}_{\bxi}\left[\| p(\xi_{t})\|^2\;\middle|\;\xi_0=\overline{\xi}_0\right]\right\}^{1/2}.
  \end{align}
\end{theorem}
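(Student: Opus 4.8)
The plan is to reduce the dynamic regret to a Hessian-weighted squared distance between the SPC and the optimal trajectories, and then to control that distance using the perturbation bounds already established.

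First I would exploit the convex-quadratic structure. Conditioned on $\xi_0=\overline{\xi}_0$ and under Assumption \ref{ass:fin}, Problem \eqref{eqn:orig} is a finite-dimensional quadratic program over the scenario tree whose objective is convex (by Assumption \ref{ass:main}\ref{ass:main-cvx}, $Q(\xi_t)\succeq\bzero$ and $R(\xi_t)\succ\bzero$) and whose constraints \eqref{eqn:orig-con} are affine. The key observation is that the SPC policy $\{w^{(W)}_t(\cdot;\overline{w}_{-1})\}_{t\in\cT}$ from \eqref{eqn:spc-pol} is nonanticipative (its value at $t$ depends only on $\bxi_{0:t}$) and satisfies the dynamics for every realization, hence it is a \emph{feasible} point of \eqref{eqn:orig}, not merely a competitor. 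For a convex quadratic program with an affine feasible set, the suboptimality of any feasible point equals one half of the Hessian-weighted distance to the minimizer, since the first-order term $\nabla J(w^\star)^\top(w^{(W)}-w^\star)$ vanishes (the difference is a feasible direction and $w^\star$ is stationary). Writing $\delta_t\coloneqq w^{(W)}_t(\bxi_{0:t};\overline{w}_{-1})-w^\star_t(\bxi_{0:t};\overline{w}_{-1})$ and $H_t\coloneqq\diag(Q(\xi_t),R(\xi_t))$, and using $\|H_t\|\le L$ from Assumption \ref{ass:main}\ref{ass:main-bdd}, this gives the exact identity
\[
J^{(W)}(\overline{\xi}_0;\overline{w}_{-1})-J^\star(\overline{\xi}_0;\overline{w}_{-1})=\frac{1}{2}\,\mathbb{E}_{\bxi}\!\left[\sum_{t\in\cT}\delta_t^\top H_t\,\delta_t\;\middle|\;\xi_0=\overline{\xi}_0\right]\le\frac{L}{2}\sum_{t\in\cT}\mathbb{E}_{\bxi}\!\left[\|\delta_t\|^2\;\middle|\;\xi_0=\overline{\xi}_0\right],
\]
which reduces the theorem to a second-moment bound on the trajectory deviation $\delta_t$.

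Second I would bound $\{\mathbb{E}_{\bxi}[\|\delta_t\|^2\mid\xi_0=\overline{\xi}_0]\}^{1/2}$. Since $\delta_t$ is purely a truncation artifact (it vanishes once $W\ge T$), it must carry a geometric factor in $W$, and propagating it through the exponentially stable closed loop of Theorem \ref{thm:stab} (whose rate is the deteriorated $\rho^{1/2}$) yields the $\rho^{W/2}$ scaling. Building on the open-loop estimate of Theorem \ref{thm:decay} together with the EISSE of both policies (Corollary \ref{cor:decay} and Theorem \ref{thm:stab}), I would establish a per-stage deviation bound of the schematic form
\[
\left\{\mathbb{E}_{\bxi}\!\left[\|\delta_t\|^2\;\middle|\;\xi_0=\overline{\xi}_0\right]\right\}^{1/2}\le c_3\,D\,\rho^{W/2}+c_4\,\|\overline{w}_{-1}\|\,\rho^{(t+W)/2},
\]
with $c_4=8c_1^2c_2L^3$ and $c_3$ as in \eqref{eqn:cs}. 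The disturbance contribution is \emph{uniform} in $t$: at every stage the SPC problem discards the data $p(\xi_{t'})$ beyond its horizon, whose second moments are at most $D$ and whose influence on the actuated decision is exponentially suppressed by Theorem \ref{thm:decay}; the initial-condition contribution \emph{decays} in $t$ because the memory of $\overline{w}_{-1}$ is forgotten exponentially along both trajectories, which supplies the extra factor $\rho^{t/2}$.

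Finally I would substitute this deviation bound into the regret identity, square the two-term right-hand side, and sum the three resulting geometric series over $t\in\cT$: the squared disturbance term gives $\sum_{t}c_3^2D^2\rho^W=O(c_3^2D^2T\rho^W)$ (the unique origin of the factor $T$, matching $c_5\sim c_3^2$); the cross term gives $\sum_{t}2c_3c_4D\|\overline{w}_{-1}\|\rho^{W}\rho^{t/2}=O\!\big(\tfrac{c_3c_4}{1-\rho^{1/2}}D\|\overline{w}_{-1}\|\rho^W\big)$ (matching the $\tfrac{1}{1-\rho^{1/2}}$ prefactor of $c_6$); and the squared initial-condition term gives $\sum_{t}c_4^2\|\overline{w}_{-1}\|^2\rho^{W}\rho^{t}=O\!\big(\tfrac{c_4^2}{1-\rho}\|\overline{w}_{-1}\|^2\rho^W\big)$ (matching the $\tfrac{1}{1-\rho}$ prefactor of $c_7$). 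Folding the factor $L/2$ and the remaining stability constants into $c_5,c_6,c_7$ produces the stated bound, valid for $W\ge\overline{W}$ — the same threshold as in Theorem \ref{thm:stab}, which is precisely what keeps the stagewise truncation error below the stability margin.

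I expect the main obstacle to be the per-stage deviation bound of the second step, specifically producing the sharp $\rho^{W/2}$ factor together with the correct separation of the $t$-uniform disturbance part from the $t$-decaying initial-condition part, and with explicit constants $c_3,c_4$. The difficulty is that the stagewise control discrepancy couples two sources: the two policies act from \emph{different} states (the SPC trajectory has already drifted from the optimal one, an effect that must be contracted away using closed-loop stability) and the SPC uses a \emph{truncated} horizon (an effect controlled only by the open-loop estimate of Theorem \ref{thm:decay}). Closing a self-consistent geometric recursion on these coupled errors — so that the drift does not amplify the truncation error faster than the closed loop damps it — is the crux of the argument and is exactly where the closed-loop analysis behind Theorem \ref{thm:stab}, rather than Theorem \ref{thm:decay} alone, becomes indispensable.
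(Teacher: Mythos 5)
Your reduction is correct, and it takes a genuinely different route from the paper. The paper never compares the SPC trajectory with the optimal trajectory directly: it telescopes the regret stagewise (display \eqref{eqn:perf}), comparing the SPC cost-to-go $\underline{J}^{(W)}_{\cV_t}$ with the optimal cost-to-go $\underline{J}^{(\cV_t,\infty)}(\underline{\bw}^{(W)}_{\cV_{t-1}})$ evaluated at the \emph{SPC} state, through the hypothetical iterate $\widehat{w}^{(W)}_k$ of \eqref{equ:def:what}. Because that comparison is anchored at a non-optimal point, first-order terms survive, and the paper needs both Lemma \ref{lem:perf} (the $O(\rho^W)$ closeness of $\underline{\bw}^{(W)}_{\cV_t}$ and $\widehat{\bw}^{(W)}_{\cV_t}$) and Lemma \ref{lem:sig} (off-diagonal decay of the value-function Hessian) to control them, giving stagewise regret $O(\rho^W)$ and total regret $O(T\rho^W)$. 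Your global quadratic identity --- regret $=\tfrac12\sum_i\pi_i\,\delta_i^\top\diag(\underline{Q}_i,\underline{R}_i)\,\delta_i$, valid because the SPC tree trajectory is feasible for the extensive QP \eqref{eqn:origd} and the first-order term vanishes at the global minimizer --- bypasses Lemma \ref{lem:sig} and the long computation \eqref{eqn:perf} entirely. Moreover, your crux step does close, using the paper's own machinery: by Lemma \ref{lem:bellman} the optimal trajectory obeys the one-step recursion \eqref{eqn:rec-full} with $W=\infty$, so subtracting it from \eqref{eqn:hypo} gives the \emph{exact} recursion $\boldsymbol{\delta}_{\cV_t}=\underline{\bS}^{(\infty)}_{\cV_t,\cV_{t-1}}\boldsymbol{\delta}_{\cV_{t-1}}+\bigl(\underline{\bw}^{(W)}_{\cV_t}-\widehat{\bw}^{(W)}_{\cV_t}\bigr)$ with $\boldsymbol{\delta}_{\cV_{-1}}=\bzero$; Lemma \ref{lem:perf} bounds the forcing term, Lemma \ref{lem:help-2} bounds the products of $\underline{\bS}^{(\infty)}$, and unrolling yields $\|\boldsymbol{\delta}_{\cV_t}\|_{\bpi}\leq\frac{2c_1L}{\rho}\rho^{W}\left(\frac{c_3D}{1-\rho}+\frac{c_4\rho^{t/2}}{1-\rho^{1/2}}\|\overline{w}_{-1}\|\right)$, exactly the uniform-in-$t$ disturbance part plus $\rho^{t/2}$-decaying initial-condition part you describe.

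Two caveats. First, your per-stage deviation bound is not correct with the constants you name: $c_3,c_4$ of \eqref{eqn:cs} bound $\underline{\bw}^{(W)}_{\cV_t}-\widehat{\bw}^{(W)}_{\cV_t}$ (Lemma \ref{lem:perf}), not the deviation from the optimal trajectory, which picks up the extra unrolling factors $\tfrac{2c_1L}{\rho(1-\rho)}$ and $\tfrac{2c_1L}{\rho(1-\rho^{1/2})}$ shown above; consequently your route proves the theorem's bound in form but with explicit constants different from $c_5,c_6,c_7$ of \eqref{eqn:cs}, i.e., strictly a variant of the stated result. Second --- a point in your favor --- the natural deviation bound is $O(\rho^W)$, not your conservative $\rho^{W/2}$; squared in your identity it gives regret $O\bigl((D^2T+D\|\overline{w}_{-1}\|+\|\overline{w}_{-1}\|^2)\rho^{2W}\bigr)$, a sharper exponent than the paper's $\rho^W$, precisely because the quadratic expansion cancels the linear-in-deviation terms that the paper's stagewise telescoping must retain.
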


The proof is deferred to Appendix \ref{apx:perf}. The sketch of the proof is as follows. We first analyze the stagewise regret. This quantity estimates how much performance compromise is made in each stage by implementing the \red{SMPC} policy instead of the optimal one. By the exponential decay result, we can show that the stagewise regret is $O(\rho^W)$, and this quantity does not grow in $t$ due to the stability result in Theorem \ref{thm:stab}. Then, by summing up the stagewise dynamic regret over the full horizon $\cT$, we can obtain the result in Theorem \ref{thm:perf}.

This result matches the result for DMPC reported in \cite[Theorem 4.2]{lin2021perturbation}. We note that in addition to dynamic regret, \cite{lin2021perturbation} analyzed the competitive ratio $J^{(W)}(\overline\xi_0;\overline{w}_{-1})/J^\star(\overline\xi_0;\overline{w}_{-1})$, and showed that this ratio is $1+O(\rho^W)$. Unfortunately, this type of analysis does not apply to our setting because the competitive ratio is not well defined for our setting; we allow the perturbation to enter not only as disturbances but also as cost vectors, which makes the optimal performance metric indefinite with respect to the perturbations. 

Theorem \ref{thm:perf} indicates that \red{SMPC} can achieve high performance with a moderate length of prediction horizon: it is sufficient for $W$ to be $O(\log(1/\epsilon))$ to achieve $O(\epsilon T)$ dynamic regret, $J^{(W)}(\overline\xi_0;\overline{w}_{-1}) - J^\star(\overline\xi_0;\overline{w}_{-1}) $, and $O(\log(T))$ to achieve $O(1)$ dynamic regret. Thus, \red{SMPC} can achieve near-optimal performance without dealing with the full horizon. We note that the optimal performance index $J^\star(\overline\xi_0;\overline{w}_{-1})$ often grows linearly with $T$, because, as long as $\mathbb{E}_{\bxi}[\|p(\xi_t)\|^2]\neq 0$ for $t\in\cT$, this nonzero additive noise continually perturbs the system. In this sense, $O(\epsilon T)$ dynamic regret is a reasonable performance guarantee.

\red{
Note that the regret becomes exactly zero when $W \geq T$, but the bound in Theorem \ref{thm:perf} does not account for this. Thus, the bound in Theorem \ref{thm:perf} is particularly useful when $W$ is smaller~than~$T$,~which is a natural setup of the MPC problem. Furthermore, as previously mentioned, Theorem \ref{thm:perf} suggests that it is sufficient for $W$ to be $O(\log(1/\epsilon))$ to achieve $O(\epsilon T)$ dynamic regret. Consequently, the tightness of the bound in the large $W$ regime is of less practical importance.  
}

Theorem \ref{thm:perf} also reveals the trade-off between the computational expense and the performance of the \red{SMPC} scheme. The dynamic regret of \red{SMPC} improves exponentially with $W$, but the improved performance comes at the expense of more complex online computations. Thus, one needs to choose a $W$ that appropriately balances the computational expense and the performance. Furthermore, we observe that $\overline{W}\rightarrow \infty$ and $\rho\rightarrow 1$ as $L\rightarrow \infty$, $\alpha\rightarrow 1$, or $\gamma \rightarrow 0$. Therefore, if Assumption \ref{ass:main} is close to being violated, a longer prediction horizon is necessary to make \red{SMPC} stabilizing and achieve near-optimal performance. Moreover, we note that the results in Theorems \ref{thm:decay}, \ref{thm:stab}, and \ref{thm:perf} are independent of the number of supports of $\bxi$. This fact implies that in the case of sample average approximation, the constants $c_1,\cdots,c_7$, and $\rho$ do not deteriorate as the number of samples increases.

\section{Conclusions and Future Work}\label{sec:conc}

Our \red{dynamic regret analysis} indicates that \red{SMPC} is nearly optimal as a decision policy for sequential decision-making under uncertainty.
\red{Our results suggest that the dynamic regret of \red{SMPC} decreases exponentially with the length of the prediction horizon.}
This, in turn, implies that \red{SMPC} can achieve near-optimal performance with a moderate prediction horizon.
Thus, we conclude that \red{SMPC} is an effective strategy for addressing the intractability of stochastic control problems.

However, our results are limited in some aspects, and  important open questions remain:
\begin{itemize}[leftmargin=1.25em, labelsep = 0.5em]
\item For problems with a large number of scenarios, the current multistage formulation may not adequately reduce the computational complexity. {\it Robust horizon approximation} is one of the widely used methods for reducing the complexity \cite{lucia2013multi}, wherein the extensive scenario tree is considered only up to a point called the {\it robust horizon}, and the tree is sparsified afterward. Accordingly, the problem size grows much more slowly. In the future, we propose to investigate the performance of \red{SMPC} with robust-horizon approximation.

\item The current approach can only deal with stochastic control problems with {\it finite horizons}. However, depending on the application, the performance and stability of \red{SMPC} over an {\it infinite horizon} might be of interest. In the future, we propose to study the average performance of the \red{SMPC} scheme in an appropriate infinite-horizon control setting.

\item \red{
Our \red{dynamic regret analysis} assumes that the exact distribution of uncertainties is known in advance.
Distributional robustness concerning stability properties has been established in recent work \cite{mcallisterInherentDistributionalRobustness2024}.
Analyzing \red{dynamic regret} under inexact knowledge of the distribution presents~an~intriguing avenue for future research.

}
\end{itemize}

\APPENDICES
\section{Perturbation Analysis of Extensive Problem}\label{apx:ext}

In this section, we derive a finite-dimensional equivalent of \eqref{eqn:orig}, which we call an extensive problem, and analyze the perturbation bound of that problem. In particular, we formulate \eqref{eqn:orig} as a multistage stochastic program based on an extensive scenario tree. Then, we apply the perturbation analysis result for graph-structured optimization problems to obtain its perturbation bound.

\paragraph{Notation}{
  A {\it scenario tree} $\cG=(\cV,\cE)$ is a finite, rooted, connected acyclic graph (a typical structure is depicted in Figure \ref{fig:scenario-tree}). For each node, the neighbor on the path toward the root is called the {\it parent} node, and the rest of the neighbors are called {\it children} nodes. The nodes without children are called {\it leaves}. A scenario tree is called {\it stage-$T$ scenario tree} if its leaves have a uniform distance $T$ from the root. We say $j$ is a {\it descendant} of $i$ if it is either a child of $i$ or is (recursively) a descendant of any of the children of $i$. We say $i$ is an {\it ancestor} of $j$ if $j$ is a descendant of $i$. Every node is both an ancestor and a descendant of itself. For $i\in\cV$, we denote its parent by $a(i)$, the set of children by $c(i)$, and $c^t(i)\coloneqq c\circ \cdots \circ c (i)$ (repeated $t$ times). We let $\cV_t$  be the set of nodes at stage $t$, and let $\cV_{t':t''} \coloneqq \bigcup_{t\in\cT_{t':t''}}\cV_t$. Further, we let $\cV^{(k)}_t$ and $\cV^{(k)}_{t':t''}$ be the subsets of $\cV_t$ and $\cV_{t':t''}$ whose elements are descendants of $k$. We also denote by $t(j)$ the stage of $j\in\cV$ (the distance from the root node). Consider $\{i_1 \coloneqq i,\cdots,i_n \coloneqq j\}\subseteq \cV$ such that $i_{t-1}=a(i_t)$ for $t=2,\cdots,n$; we denote such a sequence by $i\rightarrow j$ and denote $\{i_2,\cdots,i_n\}$ by $i\rightharpoonup j$. For $\{v_i\}_{i\in\cV}$, we let $\bv_{i\rightarrow j}\coloneqq [v_{i_1};v_{i_2};\cdots;v_{i_n}]$. Moreover, $\prod\bPhi_{i\rightarrow j}\coloneqq \Phi_{i_n}\cdots \Phi_{i_1}$, and $\prod\Phi_{i\rightharpoonup j}\coloneqq \Phi_{i_n}\cdots \Phi_{i_2}$. For $\{v_i\}_{i\in\cV}$ and $\{M_{ij}\}_{i,j\in\cV}$, we let $\bv_{\cV'}\coloneqq \{v_i\}_{i\in\cV'}$ and $\bM_{\cV',\cV''}\coloneqq \{M_{ij}\}_{i\in\cV',\cV''}$, where $\cV',\cV''\subseteq \cV$.
}

\subsection{Scenario Tree}

We now discuss the construction of a scenario tree from a known distribution of uncertainty $\bxi$. The following lemma proves the existence of the scenario tree that completely captures the distribution of $\bxi$.

\begin{proposition}\label{prop:scen}
  Under Assumption \ref{ass:fin} and given $\overline{\xi}_0$, there exist stage-$T$ scenario tree $\cG\coloneqq(\cV,\cE)$ whose root is $0\in\cV$, nodal realizations $\underline{\bxi}\coloneqq\{\underline{\xi}_i\in\Xi\}_{i\in\cV}$, and nodal probabilities $\bpi\coloneqq\{\pi_{i}\}_{i\in\cV}$ such that
  \begin{enumerate}[label=(\alph*), leftmargin=1.5em, labelsep = 0.25em]
  \item\label{prop:scen-a} $\overline\bxi_{0:t}\in\bXi_{0:t}(\overline{\bxi}_{0:\tau}) \iff \exists k\in\cV_{\tau}$and its descendant $j\in\cV^{(k)}_{t}$ such that $ \overline\bxi_{0:\tau} =\underline{\bxi}_{0\rightarrow k}$ and $ \overline\bxi_{0:t} =\underline{\bxi}_{0\rightarrow j}$.

  \item\label{prop:scen-b} $\pi_0=1$, $\bpi>0$, and $\sum_{j\in c(i)}\pi_j = \pi_i$ for all $i\in \cV_{0:T-1}$.

  \item\label{prop:scen-c} $\mathbb{P}[\bxi_{0:t(j)}=\underline{\bxi}_{0\rightarrow j} \mid \bxi_{0:t(k)}=\underline{\bxi}_{0\rightarrow k}] = \pi_{j\mid k} \coloneqq \pi_j / \pi_k$ for $j\in\cV$ and its ancestor $k\in\cV$.
  \end{enumerate}

\end{proposition}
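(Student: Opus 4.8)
The plan is to build $\cG$ as the \emph{prefix tree} (trie) of the positive-probability histories of $\bxi$. Concretely, I would take the stage-$t$ node set $\cV_t$ to be a labeled copy of the marginal support $\bXi_{0:t}(\overline{\xi}_0)$, so that each $i\in\cV_t$ carries a distinct label $\overline{\bxi}_{0:t}\in\bXi_{0:t}(\overline{\xi}_0)$; the root $0\in\cV_0$ is labeled by the singleton $\overline{\xi}_0$. For a node $i\in\cV_t$ with $t\geq 1$ labeled by $\overline{\bxi}_{0:t}$, I set its parent $a(i)$ to be the stage-$(t-1)$ node labeled by the truncation $\overline{\bxi}_{0:t-1}$, and declare $\cE$ to consist of all such parent--child pairs. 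I then define the nodal realization $\underline{\xi}_i\coloneqq\overline{\xi}_t$ (the last coordinate of the label) and the nodal probability $\pi_i\coloneqq\mathbb{P}[\bxi_{0:t}=\overline{\bxi}_{0:t}\mid\xi_0=\overline{\xi}_0]$.

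First I would check that $\cG=(\cV,\cE)$ is a stage-$T$ scenario tree. Finiteness of $\cV$ is immediate from Assumption \ref{ass:fin}. The parent map is well defined because a truncation of a positive-probability prefix is again a positive-probability prefix, i.e., $\overline{\bxi}_{0:t}\in\bXi_{0:t}(\overline{\xi}_0)$ forces $\overline{\bxi}_{0:t-1}\in\bXi_{0:t-1}(\overline{\xi}_0)$; iterating the parent map from any node terminates at the unique root, giving connectedness and acyclicity. To see that every leaf lies at stage $T$, I would invoke the dual fact that a marginal support is exactly the set of restrictions of full-length trajectories: for $t<T$, any $\overline{\bxi}_{0:t}\in\bXi_{0:t}(\overline{\xi}_0)$ satisfies $\mathbb{P}[\bxi_{0:t}=\overline{\bxi}_{0:t}\mid\xi_0=\overline{\xi}_0]=\sum_{\overline{\xi}_{t+1}}\mathbb{P}[\bxi_{0:t+1}=\overline{\bxi}_{0:t+1}\mid\xi_0=\overline{\xi}_0]$, so at least one one-step extension has positive probability and the node has a child.

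The crux is the path--label identity underlying property \ref{prop:scen-a}. By construction, walking up from a node $j\in\cV_t$ labeled by $\overline{\bxi}_{0:t}$ passes through nodes labeled $\overline{\bxi}_{0:t-1},\overline{\bxi}_{0:t-2},\ldots,\overline{\xi}_0$, so the realizations collected along the path satisfy $\underline{\bxi}_{0\rightarrow j}=\overline{\bxi}_{0:t}$, and likewise $\underline{\bxi}_{0\rightarrow k}=\overline{\bxi}_{0:\tau}$ for the stage-$\tau$ ancestor $k$ of $j$. Granting this, both directions of \ref{prop:scen-a} collapse to a tautology: $\overline{\bxi}_{0:t}\in\bXi_{0:t}(\overline{\bxi}_{0:\tau})$ means exactly that $\overline{\bxi}_{0:t}$ is a positive-probability prefix restricting to $\overline{\bxi}_{0:\tau}$, which by the labeling is precisely a stage-$t$ node $j$ whose label restricts to the label $\overline{\bxi}_{0:\tau}$ of its stage-$\tau$ ancestor $k\in\cV_\tau$, i.e., $j\in\cV^{(k)}_t$.

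Finally I would dispatch the probabilistic claims. For \ref{prop:scen-b}: $\pi_0=\mathbb{P}[\xi_0=\overline{\xi}_0\mid\xi_0=\overline{\xi}_0]=1$; positivity $\bpi>0$ holds since every label lies in the support; and the conservation $\sum_{j\in c(i)}\pi_j=\pi_i$ is the total-probability decomposition already displayed above, since the children of $i$ are labeled by exactly the positive-probability one-step extensions of $i$'s label. For \ref{prop:scen-c}: when $k$ is an ancestor of $j$, the label of $k$ is a prefix of that of $j$, so $\{\bxi_{0:t(j)}=\underline{\bxi}_{0\rightarrow j}\}\subseteq\{\bxi_{0:t(k)}=\underline{\bxi}_{0\rightarrow k}\}$, whence the conditional probability equals the ratio of the two probabilities; since conditioning on $\xi_0=\overline{\xi}_0$ is common to both and cancels, this ratio is $\pi_j/\pi_k=\pi_{j\mid k}$. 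I expect the only genuine content to be the stage-$T$ claim and the path--label identity---the two places where one must use that marginal supports are restrictions of full-trajectory supports and that distinct histories are kept distinct by the trie construction; the conditional-probability algebra for \ref{prop:scen-b} and \ref{prop:scen-c} is then routine.
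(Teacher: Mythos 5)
Your construction is exactly the paper's own proof: the paper likewise takes $\cV=\bigcup_{t\in\cT}\bXi_{0:t}(\overline{\xi}_0)$ with edges given by one-step prefix extension, the same nodal realizations and probabilities, and then verifies (a)--(c) by the same prefix/subset-of-events arguments. Your treatment is correct, and if anything slightly more careful than the paper's on one point (explicitly showing via marginalization that every stage-$t<T$ node has a child, so all leaves lie at stage $T$).
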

\begin{proof}{Proof.}
  We prove the claim by direct construction. Let
  \begin{subequations}\label{eqn:G}
    \begin{align}
      \label{eqn:G-1}&\cG  \coloneqq( \cV, \cE),\\
      \label{eqn:G-2}&\cV \coloneqq\bigcup_{t\in\cT} \bXi_{0:t}(\overline{\xi}_0), \\
      \label{eqn:G-3}&\cE  \coloneqq\bigcup_{t\in\cT_{1:T}} \{(\overline\bxi_{0:t-1},\overline\bxi_{0:t}) : \overline\bxi_{0:t}\in \bXi_{0:t}(\overline\xi_0)\}, \\
      \label{eqn:G-4}&\forall j (= \overline\bxi_{0:t})\in \cV: \quad\underline{\xi}_j \coloneqq\overline\xi_t,\quad \pi_j \coloneqq \mathbb{P}[\bxi_{0:t} = \overline\bxi_{0:t} \mid \xi_0 = \overline{\xi}_0].
    \end{align}
  \end{subequations}
  We first show that $\cG$ is a stage-$T$ scenario tree. We let $\overline{\xi}_0$ be the root (note that one can later relabel~this~as~$0$). In fact, for any $\overline\bxi_{0:t}\in\cV$, one can see that there exists a unique path $\{\overline\xi_0, \overline\bxi_{0:1}\}, \cdots\{\overline\bxi_{0:t-1},\overline\bxi_{0:t}\}$ from the~root. Thus, ${\cG}$ is acyclic and connected. Furthermore, each leave $\overline\bxi_{0:T}\in\cV$ has a uniform distance $T$ to the root. Moreover, $\cV$ is finite since $\bXi_{0:t}(\overline\xi_0)$ for $t\in\cT$ are finite (cf. Assumption \ref{ass:fin}). Thus, ${\cG}$ is a stage-$T$ scenario~tree.

  We now prove Proposition \ref{prop:scen}\ref{prop:scen-a}. If $\overline\bxi_{0:t}\in\bXi_{0:t}(\overline{\bxi}_{0:\tau})$, there exist $k\coloneqq\overline\bxi_{0:\tau}\in\cV$ and $j\coloneqq\overline\bxi_{0:t}\in\cV$ by \eqref{eqn:G-2}. From \eqref{eqn:G-3}, we can observe that $j$ and $k$ are $t$ and $\tau$ hops apart from the root node, and one can see that $j$ is a descendant of $k$. This implies that $j\in \cV^{(k)}_t$ and $k\in\cV_\tau$. Further, from \eqref{eqn:G-3}-\eqref{eqn:G-4}, one can see that $\overline\bxi_{0:t} = \underline\bxi_{0\rightarrow j}$. Conversely, if $\exists k\in\cV_\tau $ and $j\in\cV^{(k)}_t$ such that $\overline\bxi_{0:t} =\underline{\bxi}_{0\rightarrow j}$, we have from \eqref{eqn:G-3}-\eqref{eqn:G-4} that $j=\overline\bxi_{0:t}$ and $k=\overline\bxi_{0:\tau}$. By \eqref{eqn:G-2}, $\overline\bxi_{0:\tau}\in \bXi_{0:\tau}(\overline\xi_0)$ and $\overline\bxi_{0:t}\in \bXi_{0:t}(\overline\xi_0)$, and therefore, we have $\overline\bxi_{0:t}\in\bXi_{0:t}(\overline{\bxi}_{0:\tau})$.

  Next, we prove Proposition \ref{prop:scen}\ref{prop:scen-b}. One can see that $\pi_0=1$ from \eqref{eqn:G-4}. Since the support of $\bxi$ is finite~(cf. Assumption \ref{ass:fin}), we have $\pi_j>0$ for all $j\in\cV$. By \eqref{eqn:G-2}, we have that $\bxi_{0:t}=\overline\bxi_{0:t}$ for all $\overline\bxi_{0:t}\in\cV_t$ are~disjoint nonempty events, and $\bxi_{0:t}=\overline\bxi_{0:t}$ for all $\overline\bxi_{0:t}\notin\cV_t$ are empty events. Accordingly, 
  \begin{align*}
  \mathbb{P}[\bxi_{0:t}=i \mid \xi_0 = \overline{\xi}_0] = \sum_{j\in c(i)}\mathbb{P}[\bxi_{0:t+1}=j \mid \xi_0 = \overline{\xi}_0],\;\forall i\in\cV_{0:T-1}.
  \end{align*}
From this and the definition of $\{\pi_j\}_{j\in\cV}$ in \eqref{eqn:G-4}, we obtain $\sum_{j\in c(i)}\pi_j = \pi_i$.

  Finally, we prove Proposition \ref{prop:scen}\ref{prop:scen-c}. Since the event $\bxi_{0:t(j)} = \underline\bxi_{0\rightarrow j}$ is a subset of the event $\bxi_{0:t(k)} = \underline\bxi_{0\rightarrow k}$ when $j$ is a descendant of $k$, and since both events are a subset of the event $\xi_{0} = \overline{\xi}_{0}$, we have
  \begin{equation*}
    \mathbb{P}[\bxi_{0:t(j)}=\underline{\bxi}_{0\rightarrow j} \mid \bxi_{0:t(k)}=\underline{\bxi}_{0\rightarrow k}] = \frac{\mathbb{P}[\bxi_{0:t(j)}=\underline{\bxi}_{0\rightarrow j} \mid \xi_0 = \overline{\xi}_0]}{\mathbb{P}[ \bxi_{0:t(k)}=\underline{\bxi}_{0\rightarrow k} \mid \xi_0 = \overline{\xi}_0]} \stackrel{\eqref{eqn:G-4}}{=} \frac{\pi_j}{\pi_k}.
  \end{equation*}
  This completes the proof.
  \qedhere
\end{proof}

Proposition \ref{prop:scen}\ref{prop:scen-a} suggests that the scenario tree fully captures the support of $\bxi$, and Proposition \ref{prop:scen}\ref{prop:scen-c} suggests that the conditional distribution of $\bxi$ can be completely characterized by the scenario tree. In the following, we will assume that the scenario tree $\cG=(\cV,\cE)$, nodal realizations $\underline\bxi$, and nodal probabilities $\bpi$  are automatically given whenever Assumption \ref{ass:fin} is invoked and $\overline{\xi}_0\in\Xi_0$ is given.

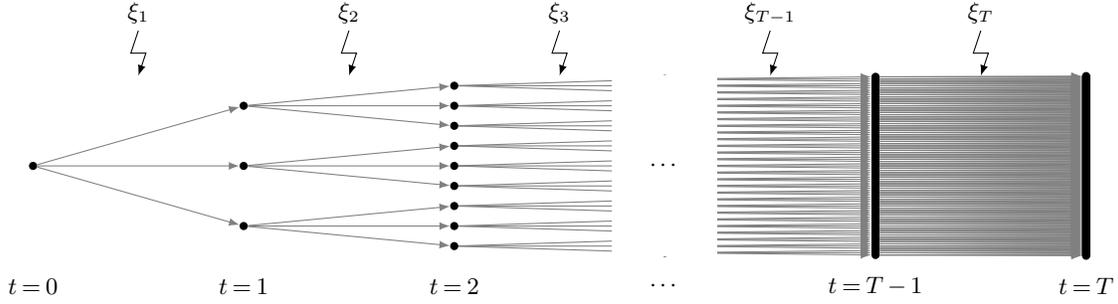
\begin{figure}[t]
  \centering
  \begin{tikzpicture}[font=\footnotesize]
    \def\W{2.8};
    \def\H{.8};
    \foreach \x/\l in {1/1,2/2,3/3,4/T-1,5/T}{
      \draw[-latex] (\W*\x-\W/2,1.5*\H+.6) node[label={[yshift=-5pt]:$\xi_{\l}$}] {} -- (\W*\x-.1-\W/2,1.5*\H+.3) -- (\W*\x+.1-\W/2,1.5*\H+.3) -- (\W*\x-\W/2,1.5*\H);
    };
    \node[circle,fill=black,scale=.3] (A0)  at (0,0) {};
    \foreach \x/\l in {-1/3,0/2,1/1}
    \node[circle,fill=black,scale=.3] (B\x) at (\W,\H*\x) {};
    \foreach \x in {-1,0,1}{
      \foreach \y in {-1,0,1}
      \node[circle,fill=black,scale=.3] (C\x\y) at (2*\W,\H*\x + \H*\y/3) {};}
    \foreach \x in {-1,0,1}{
      \foreach \y in {-1,0,1}{
        \foreach \z in {-1,0,1}
        \node[circle,fill=black,scale=.3] (D\x\y\z) at (3*\W,\H*\x + \H*\y/3 +\H*\z/9){};
      }}
    \foreach \x in {-1,0,1}{
      \foreach \y in {-1,0,1}{
        \foreach \z in {-1,0,1}{
          \foreach \w in {-1,0,1}
          \node[circle,fill=black,scale=.3] (E\x\y\z\w) at (4*\W,\H*\x + \H*\y/3 +\H*\z/9 +\H*\w/27){};
        }}}
    \foreach \x in {-1,0,1}{
      \foreach \y in {-1,0,1}{
        \foreach \z in {-1,0,1}{
          \foreach \w in {-1,0,1}{
            \foreach \v in {-1,0,1}
            \node[circle,fill=black,scale=.3] (F\x\y\z\w\v) at (5*\W,\H*\x + \H*\y/3 +\H*\z/9 +\H*\w/27 +\H*\v/81){};
          }}}}

    \foreach \x in {-1,0,1}
    \draw[gray,-latex] (A0)--(B\x);
    \foreach \x in {-1,0,1}{
      \foreach \y in {-1,0,1}
      \draw[gray,-latex] (B\x)-- (C\x\y);}
    \foreach \x in {-1,0,1}{
      \foreach \y in {-1,0,1}{
        \foreach \z in {-1,0,1}
        \draw[gray,-latex] (C\x\y)--(D\x\y\z);}}
    \foreach \x in {-1,0,1}{
      \foreach \y in {-1,0,1}{
        \foreach \z in {-1,0,1}{
          \foreach \w in {-1,0,1}
          \draw[gray,-latex] (D\x\y\z)--(E\x\y\z\w);}}}
    \foreach \x in {-1,0,1}{
      \foreach \y in {-1,0,1}{
        \foreach \z in {-1,0,1}{
          \foreach \w in {-1,0,1}{
            \foreach \v in {-1,0,1}
            \draw[gray,-latex] (E\x\y\z\w)--(F\x\y\z\w\v);}}}}
    \foreach \x/\l in {0/t=0,1/t=1,2/t=2,3/\cdots,4/t=T-1,5/t=T}{
      \node at (\W*\x,-2*\H) {$\l$};
    };
    \fill[white] (2.75*\W,-1.5*\H) rectangle + (.5*\W,3*\H);
    \node at (\W*3,0) {$\cdots$};
  \end{tikzpicture}
  \caption{Structure of a typical scenario tree.}
  \label{fig:scenario-tree}
\end{figure}

\subsection{Extensive Problem Formulation}

In this section, we derive the extensive problem formulations for \eqref{eqn:orig} and \eqref{eqn:spc}, under Assumption \ref{ass:fin}, given $\xi_0=\overline{\xi}_0\in\Xi_0$ and the scenario tree $\cG=(\cV,\cE)$ with nodal realizations $\underline\bxi$, and nodal probabilities $\bpi$. We define
\begin{subequations}\label{eqn:modeld}
  \begin{align}
    \underline{f}_i(\underline{w}_{a(i)}) &\coloneqq \underline{A}_{i}\underline{x}_{a(i)} +  \underline{B}_{i} \underline{u}_{a(i)} + \underline{d}_{i}, \quad\forall i\in\cV\\
    \underline{\ell}_i(\underline{w}_i) &\coloneqq \begin{bmatrix}
      \underline{x}_i\\
      \underline{u}_i
    \end{bmatrix}^\top\begin{bmatrix}
      \underline{Q}_i\\
      & \underline{R}_i
    \end{bmatrix}\begin{bmatrix}
      \underline{x}_i\\
      \underline{u}_i
    \end{bmatrix} - \begin{bmatrix}
      \underline{q}_i\\
      \underline{r}_i
    \end{bmatrix}^\top\begin{bmatrix}
      \underline{x}_i\\
      \underline{u}_i
    \end{bmatrix}, \quad\forall i\in\cV ,
  \end{align}
\end{subequations}
where $\underline w_i \coloneqq [\underline{x}_i;\underline{u}_i] $ are the state and control variables at node $i$; $ \underline{A}_i\coloneqq A(\underline{\xi}_i)$, $ \underline{B}_i\coloneqq B(\underline{\xi}_i)$, $ \underline{Q}_i\coloneqq Q(\underline{\xi}_i)$, $ \underline{R}_i\coloneqq R(\underline{\xi}_i)$, $ \underline{d}_i\coloneqq d(\underline{\xi}_i)$, $ \underline{q}_i\coloneqq q(\underline{\xi}_i)$, and $ \underline{r}_i\coloneqq r(\underline{\xi}_i)$, for all $i\in\cV$. We recall the definitions of $A(\cdot)$, $B(\cdot)$, $Q(\cdot)$, $R(\cdot)$, etc. from Section \ref{sec:setting-1}. Here, we use the underline notation $\underline\cdot$ to denote the  variables associated with the scenario tree.

We now consider an extensive form multistage stochastic program that corresponds to \eqref{eqn:orig}:
\begin{subequations}\label{eqn:origd}
  \begin{align}
    \underline{J}^\star(\overline{w}_{-1})\coloneqq\min_{\{\underline{w}_i\}_{i\in\cV}}\;
    & \sum_{i\in \cV} \pi_{i}\cdot \underline{\ell}_i(\underline{w}_i)\\
    \st\;\; & \underline{x}_0 = \underline{f}_0(\overline{w}_{-1}) \;\mid\; \underline y_0 \\
    & \pi_{i}\cdot\underline{x}_{i} = \pi_{i}\cdot\underline{f}_i(\underline{w}_{a(i)}) \;\mid\; \underline y_i, \quad i\in\cV_{1:T},\label{eqn:origd-con}
  \end{align}
\end{subequations}
where $\overline{w}_{-1}\in\mathbb{R}^{n_x}\times \mathbb{R}^{n_u}$ is given and $\underline y_{i}$ is the Lagrange multiplier. Problem \eqref{eqn:origd} explicitly considers every possible realization sequence of the uncertainty in a single optimization problem. Thus, solving \eqref{eqn:origd} yields a tree of decisions $\underline\bw^\star\coloneqq\{\underline w^\star_i\}_{i\in\cV}$. We note that the scaling factor $\pi_i$ is introduced in constraint \eqref{eqn:origd-con}, but this scaling does not change the solution in the primal space. We introduce the following short-hand notation: $\underline{z}_i\coloneqq[\underline{w}_i;\underline{ y}_i]$ and $\underline{p}_i\coloneqq [\underline{q}_i;\underline{r}_i;\underline{d}_i]$.

We now consider the extensive formulation that corresponds to the problem in \eqref{eqn:spc}:
\begin{subequations}\label{eqn:spcd}
  \begin{align}
    \underline{J}^{(k,W)}(\overline{w}_{\tau-1})\coloneqq\min_{\{\underline w_i\}_{i\in\cV^{(k)}_{\tau:\tau+W}}}\;
    & \sum_{i\in \cV^{(k)}_{\tau:\tau+W}} \pi_{i|k}\cdot  \underline{\ell}_i(\underline{w}_i) \\
    \st\;\; & x_k = \underline{f}_k(\overline{w}_{\tau-1}) \;\mid\; y_k \label{eqn:spcd-con-1}\\
    & \pi_{i|k}\cdot \underline x_{i} = \pi_{i|k}\cdot \underline{f}_i(\underline w_{a(i)})\;\mid\; \underline y_i, \quad i\in\cV^{(k)}_{\tau+1:\tau+W},
  \end{align}
\end{subequations}
where $\overline{w}_{\tau-1}\in\mathbb{R}^{n_x}\times \mathbb{R}^{n_u}$ and $\overline{\bxi}_{0:\tau}\in\bXi_{0:\tau}(\overline \xi_0)$ are given, and $k\in\cV$ satisfies $\underline{\bxi}_{0\rightarrow k} = \overline\bxi_{0:\tau}$ (accordingly, $\tau=t(k)$; such $k$ exists due to Proposition \ref{prop:scen}\ref{prop:scen-a}). The solution of \eqref{eqn:spcd} is denoted by $\underline\bw^{(k,W)}(\overline{w}_{\tau-1})\coloneqq\{\underline w^{(k,W)}_i (\overline{w}_{\tau-1}) \}_{i\in\cV^{(k)}_{\tau:\tau+W}}$ (the existence and uniqueness of the solution will be discussed in Section \ref{sec:perturbd}). The problem is formulated over a subtree that roots from $k$ and spans the subsequent $W$ layers of the descendant. In Lemma \ref{lem:equiv} we will establish the equivalence~between~\eqref{eqn:spc}~and~\eqref{eqn:spcd}.

The \red{SMPC} policy in \eqref{eqn:spc-pol} can be recursively expressed by:
\begin{equation}\label{eqn:spcd-pol}
  \underline w^{\red{(\textrm{cl},W)}}_k(\overline{w}_{-1})  \coloneqq \underline w^{(k,W)}_k(\underline w^{\red{(\textrm{cl},W)}}_{a(k)}(\overline{w}_{-1}) ),\quad k\in\cV ,
\end{equation}
where $\underline w^{\red{(\textrm{cl},W)}}_{a(0)}= \overline w_{-1}$ and $\overline w_{-1}\in\mathbb{R}^{n_x}\times \mathbb{R}^{n_u}$ is given. Here, note that the dependence on the past uncertainty is implicitly expressed by the node index $k$ (the decision is dependent on $\underline\bxi_{0\rightarrow k}$). Similarly to \eqref{eqn:spc}, the policy is dependent on the full history of past uncertainties, since it is used to define the conditional distribution of the future uncertainties over the prediction window.

\subsection{Probability-Scaled Norms}

As the probability of the realization of each node decays exponentially as it moves toward leaves, the problem in \eqref{eqn:origd} is inherently ill-conditioned, and this poses a challenge in the analysis of perturbation bound. We introduce a custom norm for scenario-tree-indexed vectors and matrices to overcome this challenge. In particular, we scale the nodal quantities by the probability of the associated node being realized. The probability-scaled norm (more precisely, probability-weighted inner product space) was first introduced for the analysis of multistage stochastic programs in \cite{rockafellar1991scenarios}. The definition of the {\it probability-scaled norm} is as follows. 

\begin{definition}\label{def:norm}
  Given a stage-$T$ scenario tree $\cG=(\cV,\cE)$ with nodal realizations $\underline\bxi$ and nodal probabilities $\bpi$, and given the vectors $\bv\coloneqq\{v_i\}_{i\in\cV}$, $\bu\coloneqq\{u_i\}_{i\in\cV}$, and the matrix $\bM\coloneqq\{M_{ij}\}_{i,j\in\cV}$, where $\cV',\cV''\subseteq \cV$, we define the probability-scaled norms as
  \begin{enumerate}[label=(\alph*), leftmargin=1.5em, labelsep = 0.25em]
  \item $\|\bv_{\cV'}\|_{\bpi}\coloneqq\|\{\pi^{1/2}_i v_i\}_{i\in\cV'}\|$;
  \item $\|\bM_{\cV',\cV''}\|_{\bpi}\coloneqq\max\{\|\bM_{\cV',\cV''} \bv_{\cV''}\|_{\bpi}: \|\bv_{\cV''}\|_{\bpi}\leq 1\}$;
  \item $\osigma_{\bpi}(\bM_{\cV',\cV''})\coloneqq\max\{\bu^\top_{\cV'}\bM_{\cV',\cV''} \bv_{\cV''}: \|\bu_{\cV'}\|_{\bpi}\leq 1,\|\bv_{\cV''}\|_{\bpi}\leq 1\}$.
  \end{enumerate}
\end{definition}

Note that $\|\cdot\|_{\bpi}$ with vector arguments is a vector norm, and $\|\cdot\|_{\bpi}$ and $\osigma_{\bpi}(\cdot)$ with matrix arguments are matrix norms (satisfying subadditivity, absolute homogeneity, and positive definiteness). Also, note that the matrix norm $\|\cdot\|_{\bpi}$ is an operator norm induced by the vector norm $\|\cdot\|_{\bpi}$.

In the next proposition, we establish the basic properties of the probability-scaled matrix norms.

\begin{proposition}\label{prop:norm}
  The following holds for $\bM\coloneqq\{M_{ij}\}_{i,j\in\cV}$, $\bM'\coloneqq\{M'_{ij}\}_{i,j\in\cV}$, $\widetilde{\bM}\coloneqq\{(\pi_{i\mid j})^{1/2}M_{ij}\}_{i,j\in\cV}$, $\widehat{\bM}\coloneqq\{(\pi_i\pi_j)^{-1/2}M_{ij}\}_{i,j\in\cV}$, and $\cV',\cV'',\cV'''\subseteq \cV$.
  \begin{enumerate}[label=(\alph*), leftmargin=1.5em, labelsep = 0.25em]
  \item\label{prop:norm-a} $\|\bM_{\cV',\cV''}\|_{\bpi} = \|\widetilde{\bM}_{\cV',\cV''}\|$.
  \item\label{prop:norm-b} $\osigma_{\bpi}(\bM_{\cV',\cV''}) = \|\widehat{\bM}_{\cV',\cV''}\|$.
  \item\label{prop:norm-c} $\|\bM_{\cV',\cV''} \bM'_{\cV'',\cV'''}\|_{\bpi}\leq \|\bM_{\cV',\cV''}\|_{\bpi}\cdot \|\bM'_{\cV'',\cV'''} \|_{\bpi}$.
  \item\label{prop:norm-d} $\sigma_{\bpi}(\bM_{\cV',\cV''}\bM'_{\cV'',\cV'''})\leq \osigma_{\bpi} (\bM_{\cV',\cV''})\cdot \|\bM'_{\cV'',\cV'''}\|_{\bpi} \wedge \osigma_{\bpi} (\bM'_{\cV'',\cV'''})\cdot \|\bM^\top_{\cV',\cV''}\|_{\bpi}  $.
  \end{enumerate}
\end{proposition}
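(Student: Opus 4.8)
The plan is to reduce every probability-scaled quantity to an ordinary (unscaled) norm by a single block-diagonal rescaling, then read off parts (a)--(b) by an entrywise computation and parts (c)--(d) from the resulting operator-norm / bilinear-form structure. Concretely, for any $\cV'\subseteq\cV$ introduce the block-diagonal matrix $\bD_{\cV'}$ whose diagonal block at node $i\in\cV'$ equals $\pi_i^{1/2}\bI$. Since $\bpi>0$ by Proposition \ref{prop:scen}\ref{prop:scen-b}, each $\bD_{\cV'}$ is invertible, and the scaled vector norm of Definition \ref{def:norm} is exactly $\|\bv_{\cV'}\|_{\bpi}=\|\bD_{\cV'}\bv_{\cV'}\|$.

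For part (a) I would substitute $\bw\coloneqq\bD_{\cV''}\bv_{\cV''}$ in the definition of $\|\bM_{\cV',\cV''}\|_{\bpi}$, so the constraint $\|\bv_{\cV''}\|_{\bpi}\le1$ becomes $\|\bw\|\le1$ and the objective becomes $\|\bD_{\cV'}\bM_{\cV',\cV''}\bD_{\cV''}^{-1}\bw\|$; hence $\|\bM_{\cV',\cV''}\|_{\bpi}=\|\bD_{\cV'}\bM_{\cV',\cV''}\bD_{\cV''}^{-1}\|$, whose $(i,j)$ block is $\pi_i^{1/2}M_{ij}\pi_j^{-1/2}=(\pi_{i\mid j})^{1/2}M_{ij}$, i.e. the $(i,j)$ block of $\widetilde{\bM}_{\cV',\cV''}$. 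Part (b) is analogous: setting $\ba\coloneqq\bD_{\cV'}\bu_{\cV'}$ and $\bz\coloneqq\bD_{\cV''}\bv_{\cV''}$ rewrites $\osigma_{\bpi}(\bM_{\cV',\cV''})$ as $\max\{\ba^\top\bN\bz:\|\ba\|\le1,\|\bz\|\le1\}$ with $\bN\coloneqq\bD_{\cV'}^{-1}\bM_{\cV',\cV''}\bD_{\cV''}^{-1}$, and the maximum of a bilinear form over the product of unit balls equals the largest singular value $\|\bN\|$; since the $(i,j)$ block of $\bN$ is $(\pi_i\pi_j)^{-1/2}M_{ij}$, this gives $\osigma_{\bpi}(\bM_{\cV',\cV''})=\|\widehat{\bM}_{\cV',\cV''}\|$.

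For part (c) I would invoke the fact, already noted after Definition \ref{def:norm}, that $\|\cdot\|_{\bpi}$ is the operator norm induced by the vector norm $\|\cdot\|_{\bpi}$, so submultiplicativity is automatic: for $\|\bv_{\cV'''}\|_{\bpi}\le1$ chain $\|\bM_{\cV',\cV''}\bM'_{\cV'',\cV'''}\bv_{\cV'''}\|_{\bpi}\le\|\bM_{\cV',\cV''}\|_{\bpi}\,\|\bM'_{\cV'',\cV'''}\bv_{\cV'''}\|_{\bpi}\le\|\bM_{\cV',\cV''}\|_{\bpi}\,\|\bM'_{\cV'',\cV'''}\|_{\bpi}$ and take the supremum. (Equivalently, part (a) reduces this to ordinary submultiplicativity once one observes that the tilde-scaling of a product equals the product of the tilde-scalings, since the intermediate factors $\pi_j^{-1/2}\pi_j^{1/2}$ cancel.)

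Part (d) is where the two factors must be regrouped, and I expect it to be the only step requiring care. The key tool is the bilinear bound implied by part (b): $\bu^\top\bM_{\cV',\cV''}\bw\le\osigma_{\bpi}(\bM_{\cV',\cV''})\,\|\bu\|_{\bpi}\,\|\bw\|_{\bpi}$ for all $\bu,\bw$. To obtain the first term in the minimum, I would take an optimal pair $(\bu,\bv)$ for $\osigma_{\bpi}(\bM_{\cV',\cV''}\bM'_{\cV'',\cV'''})$, set $\bw\coloneqq\bM'_{\cV'',\cV'''}\bv_{\cV'''}$, and combine this bound with the operator-norm estimate $\|\bw\|_{\bpi}\le\|\bM'_{\cV'',\cV'''}\|_{\bpi}\,\|\bv_{\cV'''}\|_{\bpi}$ from part (c). To obtain the second term, I would instead regroup $\bu^\top\bM_{\cV',\cV''}\bM'_{\cV'',\cV'''}\bv=(\bM^\top_{\cV',\cV''}\bu)^\top\bM'_{\cV'',\cV'''}\bv$, apply the bilinear bound to $\bM'_{\cV'',\cV'''}$ with left argument $\bM^\top_{\cV',\cV''}\bu$, and estimate $\|\bM^\top_{\cV',\cV''}\bu\|_{\bpi}\le\|\bM^\top_{\cV',\cV''}\|_{\bpi}\,\|\bu\|_{\bpi}$. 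Imposing $\|\bu\|_{\bpi},\|\bv\|_{\bpi}\le1$ and taking the minimum of the two upper bounds yields the claim. The only subtlety is keeping the transpose on the correct factor in the second regrouping, which is dictated by whether the residual scaling $\bD^{-1}$ lands on the row- or column-index side of $\bM$.
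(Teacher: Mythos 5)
Your proof is correct. Parts \ref{prop:norm-a} and \ref{prop:norm-b} coincide with the paper's argument: your block-diagonal rescaling $\bD_{\cV'}$ is just a packaged form of the substitution $w_j=\pi_j^{1/2}v_j$ that the paper carries out entrywise, and both identify the scaled norms with $\|\widetilde{\bM}\|$ and $\|\widehat{\bM}\|$. Where you genuinely diverge is in parts \ref{prop:norm-c} and \ref{prop:norm-d}. The paper's route is to stay on the ``unscaled'' side throughout: it uses the multiplicative identities $\widetilde{(\bM\bM')}=\widetilde{\bM}\,\widetilde{\bM}'$ and $\widehat{(\bM\bM')}=\widehat{\bM}\,\widetilde{\bM}'$ (the cancellation $(\pi_i\pi_j)^{-1/2}(\pi_{j\mid k})^{1/2}=(\pi_i\pi_k)^{-1/2}$), then invokes ordinary $2$-norm submultiplicativity, and obtains the second term of the minimum in \ref{prop:norm-d} from the transpose-invariance of $\osigma_{\bpi}(\cdot)$. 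You instead work intrinsically with the scaled objects: \ref{prop:norm-c} follows for you directly from the induced-operator-norm property of $\|\cdot\|_{\bpi}$ (no scaling identity needed), and \ref{prop:norm-d} follows from the variational bilinear bound $\bu^\top\bM\bw\leq\osigma_{\bpi}(\bM)\|\bu\|_{\bpi}\|\bw\|_{\bpi}$ combined with the two regroupings $\bu^\top\bM(\bM'\bv)$ and $(\bM^\top\bu)^\top\bM'\bv$; your second regrouping plays exactly the role of the paper's transpose-invariance argument. The trade-off: your approach is more elementary and self-contained in that it never requires verifying the product-scaling cancellations, while the paper's reduction to ordinary norms makes the mixed hat/tilde structure of \ref{prop:norm-d} visible explicitly, which is the form it later reuses (e.g., in Lemma \ref{lem:sig}). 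One cosmetic note: in the first term of \ref{prop:norm-d} you attribute the estimate $\|\bM'_{\cV'',\cV'''}\bv_{\cV'''}\|_{\bpi}\leq\|\bM'_{\cV'',\cV'''}\|_{\bpi}\|\bv_{\cV'''}\|_{\bpi}$ to part \ref{prop:norm-c}, but it is simply the defining property of the induced norm; nothing breaks, since that property is available from Definition \ref{def:norm}.
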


\begin{proof}{Proof.}
  To prove Proposition \ref{prop:norm}\ref{prop:norm-a}, we note that
  \begin{equation*}
    \max_{\|\bv_{\cV''}\|_{\bpi}\leq 1} \|\bM_{\cV',\cV''}\bv_{\cV''}\|_{\bpi} = \max_{\|\{\pi^{1/2}_i v_i\}_{i\in\cV''}\|\leq 1} \left\|\left\{\sum_{j\in\cV''}(\pi_{i\mid j})^{1/2}M_{ij}(\pi^{1/2}_jv_j)\right\}_{i\in\cV'}\right\| = \|\widetilde{\bM}_{\cV',\cV''}\|. 
  \end{equation*}	
  To prove Proposition \ref{prop:norm}\ref{prop:norm-b}, we note that
  \begin{equation*}
    \max_{\substack{\|\bu_{\cV'}\|_{\bpi}\leq 1\\\|\bv_{\cV''}\|_{\bpi}\leq 1}}\bu_{\cV'}^\top \bM_{\cV',\cV''} \bv_{\cV''} = \max_{\substack{\|\{\pi^{1/2}_iu_i\}_{i\in\cV'}\|\leq 1\\\|\{\pi^{1/2}_jv_j\}_{j\in\cV''}\|\leq 1}}  \sum_{i\in\cV',j\in\cV''}(\pi^{1/2}_iu_i )^\top\cbr{(\pi_i\pi_j)^{-1/2} M_{ij}} \pi^{1/2}_j v_j = \|\widehat{\bM}_{\cV',\cV''}\|.
  \end{equation*}
  To prove Proposition \ref{prop:norm}\ref{prop:norm-c}, we note that
  \begin{align*}
    \|\bM_{\cV',\cV''} \bM'_{\cV'',\cV'''}\|_{\bpi}
    &=\left\|\left\{\pi^{1/2}_{i|j}M_{ij}\right\}_{i\in\cV',j\in\cV''}\left\{\pi^{1/2}_{j|k}M'_{jk}\right\}_{j\in\cV'',k\in\cV'''}\right\| \\
    & = \|\widetilde{\bM}_{\cV',\cV''}\widetilde{\bM}'_{\cV'',\cV'''} \| \leq \|\widetilde{\bM}_{\cV',\cV''}\|\|\widetilde{\bM}'_{\cV'',\cV'''} \|
      = \|\bM_{\cV',\cV''}\|_{\bpi} \| \bM'_{\cV'',\cV'''}\|_{\bpi} ,
  \end{align*}
  where $\widetilde{\bM}'\coloneqq\{(\pi_{i\mid j})^{1/2}M'_{ij}\}_{i,j\in\cV}$ and the first and the last equalities follow from Proposition \ref{prop:norm}\ref{prop:norm-a}. Finally, to prove Proposition \ref{prop:norm}\ref{prop:norm-d}, we note that
  \begin{align*}
    \osigma_{\bpi}(\bM_{\cV',\cV''}\bM'_{\cV'',\cV'''} ) &=\left\|\left\{(\pi_{i}\pi_j)^{-1/2}M_{ij}\right\}_{i\in\cV',j\in\cV''}\left\{\pi^{1/2}_{j|k}M'_{jk}\right\}_{j\in\cV'',k\in\cV'''}\right\|\\
                                                         &= \|\widehat{\bM}_{\cV',\cV''}\widetilde{\bM}'_{\cV'',\cV'''} \| \leq \|\widehat{\bM}_{\cV',\cV''}\|\|\widetilde{\bM}'_{\cV'',\cV'''} \|=    \osigma_{\bpi}(\bM_{\cV',\cV''}) \|\bM'_{\cV'',\cV'''}\|,
  \end{align*}
  where the first equality follows from Proposition \ref{prop:norm}\ref{prop:norm-b}. We note that the second part of the result can be obtained by using the transpose-invariant property of $\osigma_{\bpi}(\cdot)$. This completes the proof.
  \qedhere
\end{proof}

The basic statistical property of the probability-scaled norm is established in the following proposition.

\begin{proposition}\label{prop:ev}
  Under Assumption \ref{ass:fin} and given $\overline{\xi}_0\in\Xi_0$, $k\in\cV$, $t_1,t_2\in \cT$ with $t(k)\leq t_1\leq t_2$, $\{v_t(\bxi_{0:t})\}_{t\in\cT_{t_1:t_2}}$, and $\{\underline v_i\}_{i\in\cV^{(k)}_{t_1:t_2}}$, we suppose $v_{t(j)}(\underline\bxi_{0\rightarrow j}) = \underline v_j$ for all $j\in\cV^{(k)}_{t_1:t_2}$. Then, we have
  \begin{equation*}
    \|\underline\bv_{\cV^{(k)}_{t}}\|_{\bpi} = \pi_{k}^{1/2}\left\{\mathbb{E}_{\bxi}\left[\|v_{t}(\bxi_{0:t})\|^2\;\middle|\; \bxi_{0:t(k)}=\underline{\bxi}_{0\rightarrow k}\right]\right\}^{1/2},\quad\forall t\in\cT_{t_1:t_2}.
  \end{equation*}

\end{proposition}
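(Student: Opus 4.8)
The plan is to square both sides, reduce each to the common quantity $\sum_{j\in\cV^{(k)}_t}\pi_j\|\underline v_j\|^2$, and then take square roots. First I would unfold the left-hand side using the definition of the probability-scaled norm. By Definition \ref{def:norm}(a), $\|\underline\bv_{\cV^{(k)}_t}\|_{\bpi}$ is the ordinary Euclidean norm of the stacked vector $\{\pi_i^{1/2}\underline v_i\}_{i\in\cV^{(k)}_t}$, so squaring gives $\|\underline\bv_{\cV^{(k)}_t}\|_{\bpi}^2=\sum_{i\in\cV^{(k)}_t}\pi_i\|\underline v_i\|^2$.

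Next I would expand the conditional expectation on the right-hand side as a finite sum over the stage-$t$ nodes descending from $k$. The key fact, supplied by Proposition \ref{prop:scen}\ref{prop:scen-a}, is that the events $\{\bxi_{0:t}=\underline{\bxi}_{0\rightarrow j}\}_{j\in\cV^{(k)}_t}$ form a partition of the conditioning event $\{\bxi_{0:t(k)}=\underline{\bxi}_{0\rightarrow k}\}$: every realization of $\bxi_{0:t}$ consistent with the history $\underline{\bxi}_{0\rightarrow k}$ corresponds to exactly one descendant $j\in\cV^{(k)}_t$, with no outcome omitted and none double-counted. Hence the conditional second moment decomposes as $\mathbb{E}_{\bxi}[\|v_t(\bxi_{0:t})\|^2\mid\bxi_{0:t(k)}=\underline{\bxi}_{0\rightarrow k}]=\sum_{j\in\cV^{(k)}_t}\|v_t(\underline{\bxi}_{0\rightarrow j})\|^2\,\mathbb{P}[\bxi_{0:t}=\underline{\bxi}_{0\rightarrow j}\mid\bxi_{0:t(k)}=\underline{\bxi}_{0\rightarrow k}]$.

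Then I would substitute the conditional probabilities and the matching hypothesis. By Proposition \ref{prop:scen}\ref{prop:scen-c}, each conditional probability equals $\pi_{j\mid k}=\pi_j/\pi_k$; and since $t(j)=t$ on the set $\cV^{(k)}_t$, the assumption $v_{t(j)}(\underline{\bxi}_{0\rightarrow j})=\underline v_j$ gives $v_t(\underline{\bxi}_{0\rightarrow j})=\underline v_j$. Multiplying the expansion through by $\pi_k$ therefore yields $\pi_k\,\mathbb{E}_{\bxi}[\|v_t(\bxi_{0:t})\|^2\mid\bxi_{0:t(k)}=\underline{\bxi}_{0\rightarrow k}]=\sum_{j\in\cV^{(k)}_t}\pi_j\|\underline v_j\|^2$, which matches the expression for $\|\underline\bv_{\cV^{(k)}_t}\|_{\bpi}^2$ from the first step. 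Taking square roots (both sides being nonnegative) gives the claim.

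The computation is essentially bookkeeping, so I do not anticipate a genuine obstacle; the only point requiring care is the partition argument, namely confirming that $\cV^{(k)}_t$ enumerates precisely the support of $\bxi_{0:t}$ conditioned on $\bxi_{0:t(k)}=\underline{\bxi}_{0\rightarrow k}$, which is exactly what Proposition \ref{prop:scen}\ref{prop:scen-a} guarantees. I would also flag the role of the range constraint $t(k)\le t_1\le t\le t_2$: it ensures the chosen stage $t$ lies in the interval on which $v_t$ and $\underline v$ are assumed to agree, so the substitution $v_t(\underline{\bxi}_{0\rightarrow j})=\underline v_j$ in the final step is legitimate.
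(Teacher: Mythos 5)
Your proposal is correct and follows essentially the same route as the paper's proof: both unfold the norm via Definition \ref{def:norm}, use Proposition \ref{prop:scen}\ref{prop:scen-a} to see that the events $\{\bxi_{0:t}=\underline{\bxi}_{0\rightarrow j}\}_{j\in\cV^{(k)}_t}$ partition the conditioning event, invoke Proposition \ref{prop:scen}\ref{prop:scen-c} for $\pi_{j\mid k}=\pi_j/\pi_k$, and substitute the hypothesis $v_{t(j)}(\underline{\bxi}_{0\rightarrow j})=\underline v_j$. The only cosmetic difference is that you square first and take roots at the end, whereas the paper carries the square roots through a single chain of equalities.
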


\begin{proof}{Proof.}
  The result can be obtained from
  \begin{align*}
    \|\underline\bv_{\cV^{(k)}_{t}}\|_{\bpi}
    &=\pi^{1/2}_k\left(\sum_{j\in \cV^{(k)}_{t}} \pi_{j|k} \|\underline v_j\|^2\right)^{1/2} = \pi^{1/2}_k\left(\sum_{j\in\cV^{(k)}_{t}} \mathbb{P}[\bxi_{0:t} = \underline\bxi_{0\rightarrow j} \mid \bxi_{0:t(k)} = \underline\bxi_{0\rightarrow k}]\|v_{t}(\underline\bxi_{0\rightarrow j})\|^2 \right)^{1/2}\\
    &=  \pi^{1/2}_k\left\{\mathbb{E}_{\bxi}\left[\|v_{t}(\bxi_{0:t})\|^2\;\middle|\; \bxi_{0:t(k)}=\underline{\bxi}_{0\rightarrow k}\right]\right\}^{1/2},
  \end{align*}
  where the first equality follows from Definition \ref{def:norm}; the second equality follows from $v_t(\bxi_{0:t})=\underline{v}_j$ a.s. given $\bxi_{0:t}=\underline\bxi_{0\rightarrow j}$ (from the assumption in Proposition \ref{prop:ev}), and $\mathbb{P}[\bxi_{0:t} = \underline\bxi_{0\rightarrow j} \mid \bxi_{0:t(k)} = \underline\bxi_{0\rightarrow k}]=\pi_{j\mid k}$ (by Proposition \ref{prop:scen}\ref{prop:scen-c}); and the last equality follows from the fact that the event $\bxi_{0:t(k)}=\underline\bxi_{0\rightarrow k}$ is a disjoint union of the events $\bxi_{0:t}=\underline\bxi_{0\rightarrow j}$ for $j\in \cV^{(k)}_{t}$ (by Proposition \ref{prop:scen}\ref{prop:scen-a}).
  \qedhere
\end{proof}

Proposition \ref{prop:ev} says that the probability-scaled vector norm is fundamentally related to the expected value of the squared norm of the associated random variable. This relationship allows for imposing statistical meaning to the perturbation bound for the extensive problems.

\subsection{Perturbation Analysis of Extensive Problem}\label{sec:perturbd}

In this section, we  study the sensitivity of the {\it primal-dual} solution $\underline\bz^{(k,W)}(\overline{w}_{-1})\coloneqq\{\underline z^{(k,W)}_i(\overline{w}_{-1})\}_{i\in\cV^{(k)}_{t(k):t(k)+W}}$ of \eqref{eqn:spcd} against the perturbation in $\underline\bp \coloneqq \{\underline p_i\}_{i\in\cV}$ under Assumptions \ref{ass:fin} and \ref{ass:main}. Due to the notational complexity~of~\eqref{eqn:spcd}, we simplify the presentation by establishing the perturbation bound for \eqref{eqn:origd} instead. Since \eqref{eqn:origd} and \eqref{eqn:spcd} have the same problem structure (\eqref{eqn:spcd} corresponds to a subtree of \eqref{eqn:origd}), the perturbation results for \eqref{eqn:origd} (Theorem \ref{thm:decay-ext}) can be directly extended to the results for \eqref{eqn:spcd} (Theorem \ref{thm:decay-ext-sub}).

We first discuss the extensive counterpart of Definition \ref{def:stoch}.

\begin{definition}\label{def:stochd}
  Given a stage-$T$ scenario tree $\cG=(\cV,\cE)$ with nodal realizations $\underline\bxi$ and nodal probabilities $\bpi$, we define the following for $L>0$ and $\alpha\in(0,1)$.
  \begin{enumerate}[label=(\alph*), leftmargin=1.5em, labelsep = 0.25em]
  \item\label{def:stochd-stab} (Stability) $\{\underline\Phi_i\}_{i\in\cV_{1:T}}$ is $(L,\alpha)$-stable if $\|\prod\underline{\bPhi}_{i\rightharpoonup j}\|\leq L \alpha^{t(j)-t(i)}$ for all $i\in\cV$ and its descendant $j\in\cV$.
    
  \item\label{def:stochd-stabil} (Stabilizability) $(\{\underline{A}_i\}_{i\in\cV_{1:T}},\{\underline{B}_i\}_{i\in\cV_{1:T}})$ is $(L,\alpha)$-stabilizable if $\exists L$-bounded $\{\underline{K}_i\}_{i\in\cV_{0:T-1}}$ such that $\{\underline{A}_i-\underline{B}_i\underline{K}_{a(i)}\}_{i\in\cV_{1:T}}$ is $(L,\alpha)$-stable.

  \item\label{def:stochd-detect} (Detectability) $(\{\underline{A}_i\}_{i\in\cV_{1:T}},\{\underline C_i\}_{i\in\cV_{0:T-1}})$ is $(L,\alpha)$-detectable if $\exists L$-bounded $\{\underline{K}_i\}_{i\in\cV_{1:T}}$ such that $\{\underline{A}_i-\underline{K}_i\underline C_{a(i)}\}_{i\in\cV_{1:T}}$ is $(L,\alpha)$-stable.

  \end{enumerate}
\end{definition}

Definition \ref{def:stochd} allows for interpreting Assumption \ref{ass:main} in the context of the extensive formulation.

\begin{lemma}\label{lem:stochd}
  Under Assumption \ref{ass:fin} and given $\overline{\xi}_0\in\Xi_0$, the following holds.
  \begin{enumerate}[label=(\alph*), leftmargin=1.5em, labelsep = 0.25em]
  \item\label{lem:stochd:stab} If $\{\Phi_t(\bxi_{0:t}))\}_{t\in\cT_{1:T}}$ is $(L,\alpha)$-stable, then $\{\underline\Phi_i\coloneqq\Phi_t(\underline\bxi_{0\rightarrow i})\}_{i\in\cV_{1:T}}$ is $(L,\alpha)$-stable.

  \item\label{lem:stochd:stabil} If $(\{A(\xi_t)\}_{t\in\cT_{1:T}},\{B(\xi_t)\}_{t\in\cT_{1:T}})$ is $(L,\alpha)$-stabilizable, then $(\{\underline{A}_i\coloneqq A(\underline\xi_{i})\}_{i\in\cV_{1:T}},\{\underline{B}_i\coloneqq B(\underline\xi_{i})\}_{i\in\cV_{1:T}})$ is $(L,\alpha)$-stabilizable.

  \item\label{lem:stochd:detect} If $(\{A(\xi_t)\}_{t\in\cT_{1:T}},\{C(\xi_t)\}_{t\in\cT_{0:T-1}})$ is $(L,\alpha)$-detectable, $(\{\underline{A}_i\coloneqq A(\underline\xi_{i})\}_{i\in\cV_{1:T}},\{\underline C_i\coloneqq C(\underline\xi_{i})\}_{i\in\cV_{0:T-1}})$ is $(L,\alpha)$-detectable.
  \end{enumerate}
\end{lemma}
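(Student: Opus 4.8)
The plan is to exploit the correspondence, established in Proposition~\ref{prop:scen}, between the nodes of the scenario tree and the elements of the support of $\bxi$. Specifically, Proposition~\ref{prop:scen}\ref{prop:scen-a} identifies each node $i\in\cV_t$ with a realization $\underline\bxi_{0\rightarrow i}\in\bXi_{0:t}(\overline\xi_0)$, and Proposition~\ref{prop:scen}\ref{prop:scen-b} guarantees that every node carries strictly positive probability. Together these imply that a bound holding \emph{almost surely} for the stochastic process is equivalent to the same bound holding \emph{for every node} (equivalently, every root-to-leaf path) of the tree. This equivalence is exactly the bridge between Definition~\ref{def:stoch} and Definition~\ref{def:stochd}, and it reduces all three parts to a single translation argument.

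First I would prove part~\ref{lem:stochd:stab}. Fix a node $i$ and a descendant $j$, and let $\{i_1=i,i_2,\dots,i_n=j\}$ be the unique path $i\rightarrow j$, so that $t(i_s)=t(i)+s-1$ and $n=t(j)-t(i)+1$. Using $\underline\Phi_i\coloneqq\Phi_{t(i)}(\underline\bxi_{0\rightarrow i})$ and the convention $\prod\underline\bPhi_{i\rightharpoonup j}=\underline\Phi_{i_n}\cdots\underline\Phi_{i_2}$, I get
\[
  \prod\underline\bPhi_{i\rightharpoonup j}=\Phi_{t(j)}(\underline\bxi_{0\rightarrow i_n})\cdots\Phi_{t(i)+1}(\underline\bxi_{0\rightarrow i_2}).
\]
Since the tree is stage-$T$, $j$ has a leaf descendant $\ell$; the realization $\bxi_{0:T}=\underline\bxi_{0\rightarrow\ell}$ passes through $j$, and its prefix at each stage $t(i_s)$ equals $\underline\bxi_{0\rightarrow i_s}$ (the ancestor of $\ell$ at that stage), i.e.\ $\bxi_{0:t(i_s)}=\underline\bxi_{0\rightarrow i_s}$ by construction of $\cG$ and Proposition~\ref{prop:scen}\ref{prop:scen-a}. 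Hence the displayed product coincides with $\prod_{t=t(i)+1}^{t(j)}\Phi_t(\bxi_{0:t})$ for this realization. Because the realization has positive probability, the almost-sure bound of Definition~\ref{def:stoch}\ref{def:stoch-stab} (with $t'=t(i)$, $t''=t(j)$) must hold for it, yielding $\|\prod\underline\bPhi_{i\rightharpoonup j}\|\leq L\alpha^{t(j)-t(i)}$, which is precisely Definition~\ref{def:stochd}\ref{def:stochd-stab}.

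Parts~\ref{lem:stochd:stabil} and~\ref{lem:stochd:detect} then follow immediately from part~\ref{lem:stochd:stab}. For stabilizability, let $\{K_t(\bxi_{0:t})\}_{t\in\cT_{0:T-1}}$ be the a.s.\ $L$-bounded feedback from Definition~\ref{def:stoch}\ref{def:stoch-stabil}, and set $\underline{K}_i\coloneqq K_{t(i)}(\underline\bxi_{0\rightarrow i})$ for $i\in\cV_{0:T-1}$. Along any realization through $i$ one has $\underline\bxi_{0\rightarrow a(i)}=\bxi_{0:t(i)-1}$, so
\[
  \underline{A}_i-\underline{B}_i\underline{K}_{a(i)}=A(\xi_{t(i)})-B(\xi_{t(i)})K_{t(i)-1}(\bxi_{0:t(i)-1});
\]
applying part~\ref{lem:stochd:stab} to $\Phi_t(\bxi_{0:t})\coloneqq A(\xi_t)-B(\xi_t)K_{t-1}(\bxi_{0:t-1})$ gives the desired $(L,\alpha)$-stability, while $\|\underline{K}_i\|\leq L$ is inherited from the a.s.\ bound on $K_t(\cdot)$. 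The detectability case is identical: take $\underline{K}_i\coloneqq K_{t(i)}(\underline\bxi_{0\rightarrow i})$ for $i\in\cV_{1:T}$ and $\Phi_t(\bxi_{0:t})\coloneqq A(\xi_t)-K_t(\bxi_{0:t})C(\xi_{t-1})$, using $\underline\xi_{a(i)}=\xi_{t(i)-1}$ so that $\underline{A}_i-\underline{K}_i\underline{C}_{a(i)}$ matches $\Phi_{t(i)}$ along each realization.

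The only real obstacle is bookkeeping. One must align the index offset between the stochastic product $\prod_{t=t'+1}^{t''}$ and the tree product $\prod\underline\bPhi_{i\rightharpoonup j}$, which drops the initial node $i_1=i$, and verify the prefix-truncation identity $\bxi_{0:t(i_s)}=\underline\bxi_{0\rightarrow i_s}$ for every node along the path. Once this correspondence is fixed, the positivity of the nodal probabilities performs the almost-sure-to-everywhere translation and the proof is routine.
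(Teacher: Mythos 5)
Your proposal is correct and follows essentially the same route as the paper's proof: translate the almost-sure bounds of Definition~\ref{def:stoch} into per-node bounds on the tree via the correspondence of Proposition~\ref{prop:scen}\ref{prop:scen-a}, then obtain stabilizability and detectability by defining $\underline{K}_i\coloneqq K_{t(i)}(\underline\bxi_{0\rightarrow i})$ and invoking the stability translation. Your treatment is somewhat more explicit than the paper's (extending a path to a leaf and using the positivity of nodal probabilities from Proposition~\ref{prop:scen}\ref{prop:scen-b} to justify the almost-sure-to-everywhere step), but the substance is identical.
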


\begin{proof}{Proof.}
  For Lemma \ref{lem:stochd}\ref{lem:stochd:stab}, if $\{\Phi_t(\bxi_{0:t})\}_{t\in\cT_{1:T}}$ is $(L,\alpha)$-stable, we have $\|\prod_{t=t'+1}^{t''}\Phi_{t}(\overline\bxi_{0:t})\|\leq L\alpha^{t''-t'}$ for any $\overline\bxi_{0:T}\in{\bXi}_{0:T}(\xi_0)$ (cf. Definition \ref{def:stoch}\ref{def:stoch-stab}). This implies that for any $i\in\cV_{t'}$ and its descendant $j\in\cV_{t''}$, we have $\|\prod\underline\bPhi_{i\rightharpoonup j}\|\leq L\alpha^{t''-t'}$ (cf. Proposition \ref{prop:scen}\ref{prop:scen-a}). Thus, $\{\underline\Phi_i\}_{i\in\cV_{1:T}}$ is $(L,\alpha)$-stable.

  For Lemma \ref{lem:stochd}\ref{lem:stochd:stabil}, if $(\{A(\xi_t)\}_{t\in\cT_{1:T}},\{B(\xi_t)\}_{t\in\cT_{1:T}})$ is $(L,\alpha)$-stabilizable, there exists $\{K_t(\bxi_{0:t})\}_{t\in\cT_{0:T-1}}$ such that $\{A(\xi_t)-B(\xi_t)K_{t-1}(\bxi_{0:t-1})\}_{t\in\cT_{1:T}}$ is $(L,\alpha)$-stable (cf. Definition \ref{def:stoch}\ref{def:stoch-stabil}). Letting $\{\underline K_{i}\coloneqq K_{t(i)}(\underline\bxi_{0\rightarrow i}) \}_{i\in\cV_{0:T-1}}$, we immediately have that $\{\underline A_i-\underline B_i\underline K_{a(i)}\}_{i\in\cV_{1:T}}$ is $(L,\alpha)$-stable. Thus, $(\{\underline{A}_i\}_{i\in\cV_{1:T}},\{\underline{B}_i\}_{i\in\cV_{1:T}})$ is $(L,\alpha)$-stabilizable.

  For Lemma \ref{lem:stochd}\ref{lem:stochd:detect}, we can prove it in the same way as Lemma \ref{lem:stochd}\ref{lem:stochd:stabil}; thus, we omit the proof.
  \qedhere
\end{proof}

We are now ready to establish the main perturbation result for \eqref{eqn:origd}. Here, we assume $\overline{w}_{-1}=\bzero$, since the effect of $\overline{w}_{-1}$ can be cast as the perturbation on the initial stage data $\underline d_0 \leftarrow \underline d_0 + \underline A_0 \overline{x}_{-1}+ \underline B_0 \overline{u}_{-1}$.

\begin{theorem}\label{thm:decay-ext}
  Under Assumptions \ref{ass:fin} and \ref{ass:main} and given $\overline{\xi}_0\in\Xi_0$ and $\overline{w}_{-1}=\bzero$, there exists a unique primal-dual solution $\underline\bz^\star\coloneqq\{\underline z^\star_i\}_{i\in\cV}$ of Problem \eqref{eqn:origd}. Further, there exist $\underline\bOmega^\star\coloneqq\{\underline\Omega^\star_{ij}\}_{i,j\in\cV}$ and $\underline\bPsi^\star\coloneqq\{\underline\Psi^\star_{ij}\}_{i,j\in\cV}$  such that $\underline\bz^\star=\underline\bOmega^\star \underline\bp$, $\underline\bw^\star=\underline\bPsi^\star \underline\bp$. Moreover,
  \begin{equation*}
    \|\underline \bOmega^\star_{\cV_t,\cV_{t'}}\|_{\bpi} \vee \|\underline \bPsi^\star_{\cV_t,\cV_{t'}}\|_{\bpi} \leq c_1 \rho^{|t-t'|},\quad \forall t,t'\in\cT,
  \end{equation*}
  where $c_1$ and $\rho$ are defined in \eqref{eqn:constants}.
\end{theorem}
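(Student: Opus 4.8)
The plan is to cast the first-order optimality conditions of \eqref{eqn:origd} as a linear saddle-point system whose sparsity pattern is inherited from the scenario tree, to verify uniform boundedness and regularity constants tied to Assumption \ref{ass:main}, and then to invoke the exponential-decay-of-inverses bound for graph-structured KKT systems from \cite{shin2022exponential}. Since \eqref{eqn:origd} is an equality-constrained convex quadratic program (convexity from Assumption \ref{ass:main}\ref{ass:main-cvx}), its stationarity and primal-feasibility conditions form a symmetric linear system in $\underline\bz\coloneqq\{\underline z_i\}_{i\in\cV}$ with $\underline z_i=[\underline w_i;\underline y_i]$. Working in the probability-scaled coordinates of Definition \ref{def:norm} turns this into a saddle-point operator $\tbH$ whose block $(i,j)$ vanishes unless $i=j$ or $(i,j)\in\cE$, with right-hand side a scaled copy of $\underline\bp$. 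Consequently $\underline\bz^\star=\underline\bOmega^\star\underline\bp$ with $\underline\bOmega^\star$ the inverse operator (transported back to unscaled coordinates), and $\underline\bPsi^\star$ the restriction of $\underline\bOmega^\star$ to the $\underline\bw$-rows; existence and uniqueness of $\underline\bz^\star$ then reduce to nonsingularity of $\tbH$, which holds once the regularity constant $\gamma_{\tbH}$ below is shown positive.

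The core step is to prove that $\tbH$ is $L_{\tbH}$-bounded and $\gamma_{\tbH}$-regular (that is, $\|\tbH^{-1}\|\le 1/\gamma_{\tbH}$), with $L_{\tbH}$ and $\gamma_{\tbH}$ exactly as in \eqref{eqn:constants}. The bound $L_{\tbH}=2L+1$ follows directly from Assumption \ref{ass:main}\ref{ass:main-bdd}. The lower bound decomposes into two control-theoretic ingredients. From stabilizability (Assumption \ref{ass:main}\ref{ass:main-stab}) I would construct, along each ancestor-descendant path, a feedback-generated feasible perturbation of uniformly controlled norm; aggregating these over the tree yields the uniform constraint-regularity bound $\gamma_{\tbF}$ (a controllability-Gramian-type quantity), which explains why $\gamma_{\tbF}$ depends only on $L,\alpha$. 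From $\gamma$-positive-definiteness of $\{R(\xi_t)\}$ together with detectability of $(\{A(\xi_t)\},\{Q(\xi_t)^{1/2}\})$ (Assumption \ref{ass:main}\ref{ass:main-cvx},\ref{ass:main-detect}) I would lower-bound the accumulated stage cost along each path to obtain the reduced-Hessian coercivity constant $\gamma_{\tbG}$; detectability is precisely what converts the possibly degenerate state cost into a genuine lower bound, which is why $\gamma$ enters $\gamma_{\tbG}$. The explicit formula for $\gamma_{\tbH}$ in \eqref{eqn:constants} then records the combination of $\gamma_{\tbF}$, $\gamma_{\tbG}$, $L_{\tbH}$, and the auxiliary $\overline{\mu}$ into a single regularity bound for the full saddle-point system. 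All three constants are derived pathwise and are therefore independent of the number of scenarios.

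With boundedness and regularity established, I would invoke the graph-structured perturbation bound of \cite{shin2022exponential}: for a symmetric saddle-point operator supported on a graph, $L_{\tbH}$-bounded and $\gamma_{\tbH}$-regular, the inverse decays as $\|(\tbH^{-1})_{i,j}\|\le c_1\rho^{d_\cG(i,j)}$ with $\rho$ and $c_1$ as in \eqref{eqn:constants} and $d_\cG$ the graph distance. Finally I would convert this entrywise decay into the claimed block-stage bound in $\|\cdot\|_{\bpi}$. For $i\in\cV_t$ and $j\in\cV_{t'}$ the tree distance satisfies $d_\cG(i,j)\ge|t-t'|$, so each block is bounded by $c_1\rho^{|t-t'|}$; Proposition \ref{prop:norm}\ref{prop:norm-a}--\ref{prop:norm-b} then transfers the entrywise bound to the probability-scaled operator norm of the stage block, the probability weighting preventing any growth from the tree's branching, giving $\|\underline\bOmega^\star_{\cV_t,\cV_{t'}}\|_{\bpi}\le c_1\rho^{|t-t'|}$, with the same bound for $\underline\bPsi^\star$ since it is a submatrix of $\underline\bOmega^\star$.

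I expect the main obstacle to be the second paragraph: establishing the uniform regularity $\gamma_{\tbH}$ from stabilizability and detectability with scenario-independent constants. The delicate points are constructing bounded feasible directions via the stabilizing feedback uniformly over the entire tree, lower-bounding the accumulated cost through detectability when $Q$ is only positive semidefinite, and carrying both through the probability scaling so that the resulting constants do not degrade as the scenario tree grows.
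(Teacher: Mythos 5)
Your overall strategy matches the paper's: form the KKT system of the extensive problem, pass to probability-scaled coordinates, verify uniform regularity constants ($L_{\tbH}$ from boundedness, $\gamma_{\tbF}$ from stabilizability, $\gamma_{\tbG}$ from detectability plus $R\succeq\gamma\bI$), and invoke the exponential-decay result of \cite{shin2022exponential}. The paper's Lemma \ref{lem:ureg} does essentially what your second paragraph sketches, via the factorizations $\tbF=\begin{bmatrix}\bI-\tbPhi&-\tbB\end{bmatrix}\begin{bmatrix}\bI&\\ \tbK&\bI\end{bmatrix}$ for the constraint bound and an analogous one for the detectability direction, so that part of your plan is sound (if vaguer than the paper's matrix argument).

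The gap is in your final step. You apply the decay theorem at the level of individual tree nodes, obtaining entrywise bounds $\|(\tbH^{-1})_{ij}\|\le c_1\rho^{d_\cG(i,j)}$, and then claim that since $d_\cG(i,j)\ge|t-t'|$ for $i\in\cV_t$, $j\in\cV_{t'}$, the stage block satisfies $\|\underline\bOmega^\star_{\cV_t,\cV_{t'}}\|_{\bpi}\le c_1\rho^{|t-t'|}$, with Proposition \ref{prop:norm} doing the conversion. This does not follow: operator-norm bounds on each sub-block of a block matrix do not bound the operator norm of the assembled block matrix --- in general you lose a factor up to $(|\cV_t|\,|\cV_{t'}|)^{1/2}$, and any row/column-sum argument picks up $\sum_{j\in\cV_{t'}}\rho^{d_\cG(i,j)}$, which grows with the branching of the tree because the number of nodes of $\cV_{t'}$ at graph distance $d$ from a fixed $i$ grows exponentially in $d$. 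The probability weights cannot rescue this: in the scaled system they are already absorbed into the entries of $\tbH^{-1}$, so the entrywise bounds carry no residual $\pi$ factors to sum against, and your constants would degrade with the number of scenarios, contradicting the scenario-independence you (correctly) want. Proposition \ref{prop:norm}\ref{prop:norm-a} only equates the $\bpi$-norm of a given block with the unweighted norm of its scaled counterpart; it does not assemble a block bound from entry bounds. The paper sidesteps the issue entirely: it applies Theorem \ref{thm:inv} with the \emph{stagewise} partition $\{I_t\}_{t=0}^T$, each $I_t$ collecting all nodes of $\cV_t$, so the relevant graph is a line and $\tbH$ is block-tridiagonal across stages; the theorem then yields $\|\tbOmega^\star_{\cV_t,\cV_{t'}}\|\le c_1\rho^{|t-t'|}$ directly in the scaled norm, and undoing the scaling via Proposition \ref{prop:norm}\ref{prop:norm-a} gives the claimed $\bpi$-norm bound with constants independent of the number of scenarios. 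To repair your proof, invoke the decay theorem with the stage partition rather than the node partition.
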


The proof is given later in this section. The sketch of the proof is as follows. We start from the observation that Problem \eqref{eqn:origd} is a graph-structured optimization problem whose structure is given by the scenario~tree. We aim to apply the exponential decay of the perturbation bound established in \cite{shin2022exponential}, but Problem~\eqref{eqn:origd} does not satisfy the {\it uniform regularity} conditions that are required by \cite{shin2022exponential} (cf. Theorem \ref{thm:inv}). To address this issue, we consider an equivalent {\it scaled} version of Problem \eqref{eqn:origd}, where the scaling factor is given by the probability. We show that the scaled problem satisfies the uniform regularity, from which the exponential decay is obtained for the scaled solutions. By undoing the scaling, we effectively replace the standard 2-norms with the probability-scaled norms, and the exponential decay in Theorem \ref{thm:decay-ext} is finally obtained.

Let us first state the exponential decay result for graph-structured optimization problems established in \cite{shin2022exponential}. For our problem, the graph structure is a line, induced by the time horizon. Thus, we state a special case of the original theorem by assuming that the graph is a line.

\begin{theorem}\label{thm:inv}
  Given a matrix $\tbH\coloneqq\begin{bmatrix}\tbG & \tbF^\top\\ \tbF\end{bmatrix}\in\mathbb{S}_{n_{\tbH}}$, we suppose the following holds for some constants $L_{\tbH},\gamma_{\tbG},\gamma_{\tbF}> 0$:
  \begin{enumerate}[label=(\alph*), leftmargin=1.5em, labelsep = 0.25em]
  \item $\|\tbH\|\leq L_{\tbH}$,
  \item $\tbF \tbF^\top \succeq \gamma_{\tbF} I$,
  \item $ReH(\tbG,\tbF)\succeq \gamma_{\tbG} \bI$,
  \end{enumerate}
  where $ReH(\tbG,\tbF)\coloneqq\bZ^\top\tbG\bZ$ with $\bZ$ being the null-space matrix of $\tbF$, i.e., $\bZ$ has orthonormal columns and satisfies $\tbF\bZ = \bzero$. We consider a partition $\{I_i\}_{i=0}^T$ of $\mathbb{I}_{[1,n_{\tbH}]}$. If $\tbH[I_i,I_j]=\bzero$ for any $|i-j| > 1$, then
  \begin{equation*}
    \left\Vert (\tbH^{-1})[I_{i},I_{j}]\right\Vert \leq c_1 \rho^{|i-j|},\quad \forall i,j\in \mathbb{I}_{[0,T]},
  \end{equation*} 
  where $c_1$ and $\rho$ are defined in \eqref{eqn:constants}.
\end{theorem}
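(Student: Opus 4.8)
The statement is a one-dimensional (line-graph) specialization of the general exponential-decay theorem for graph-structured saddle-point systems in \cite{shin2022exponential}, so the quickest route is to verify that conditions (a)--(c) are exactly the uniform boundedness, constraint-qualification, and reduced second-order conditions required there, and to observe that the partition $\{I_i\}$ with $\tbH[I_i,I_j]=\bzero$ for $|i-j|>1$ is the incidence structure of a path graph on $\mathbb{I}_{[0,T]}$. I will instead describe a self-contained two-step plan, since it exposes where the constants in \eqref{eqn:constants} originate.

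\emph{Step 1 (spectral localization).} I would first show that $\tbH$ is nonsingular with $\|\tbH^{-1}\|\le\gamma_{\tbH}^{-1}$, where $\gamma_{\tbH}$ is the constant in \eqref{eqn:constants}. Because $\tbH$ is symmetric with $\|\tbH\|\le L_{\tbH}$, this immediately localizes its spectrum in the symmetric, $0$-avoiding set
\[
\sigma(\tbH)\subseteq[-L_{\tbH},-\gamma_{\tbH}]\cup[\gamma_{\tbH},L_{\tbH}].
\]
To obtain the lower bound on $\sigma_{\min}(\tbH)$, I would consider $\tbH[\bw;\by]=[\bg;\bh]$ (i.e. $\tbG\bw+\tbF^\top\by=\bg$ and $\tbF\bw=\bh$) and split $\bw=\bZ\bw_Z+\tbF^\top\bw_R$ into its null-space and range components. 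Condition (b), $\tbF\tbF^\top\succeq\gamma_{\tbF}\bI$, controls the range part $\bw_R$ and the multiplier $\by$ in terms of $\bh$ and the first block residual; condition (c), $\ReH(\tbG,\tbF)=\bZ^\top\tbG\bZ\succeq\gamma_{\tbG}\bI$, controls the null-space part $\bw_Z$; and condition (a) bounds all cross terms by $L_{\tbH}$. Propagating these estimates through the two block rows and collecting coefficients is precisely the bookkeeping that produces the nested expression for $\gamma_{\tbH}$ (and the auxiliary $\overline{\mu}$) in \eqref{eqn:constants}.

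\emph{Step 2 (banded inverse decay).} With the spectrum localized, I would invoke a Demko--Moss--Smith--type polynomial-approximation argument \cite{demko1977inverses}. For any polynomial $p$ with $\deg p<|i-j|$, the block-tridiagonal structure forces $[p(\tbH)][I_i,I_j]=\bzero$, so that
\[
\left\|(\tbH^{-1})[I_i,I_j]\right\|=\left\|(\tbH^{-1}-p(\tbH))[I_i,I_j]\right\|\le\|\tbH^{-1}-p(\tbH)\|=\max_{\lambda\in\sigma(\tbH)}\left|\tfrac{1}{\lambda}-p(\lambda)\right|,
\]
the final equality using that $\tbH$ is symmetric. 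Minimizing the right-hand side over polynomials of degree below $|i-j|$ reduces the problem to the best uniform approximation of $1/\lambda$ on $[-L_{\tbH},-\gamma_{\tbH}]\cup[\gamma_{\tbH},L_{\tbH}]$; a Chebyshev estimate on this symmetric, $0$-free set gives a geometric bound that, after optimizing the constants, yields exactly the rate $\rho$ and prefactor $c_1=L_{\tbH}/(\gamma_{\tbH}^2\rho)$ of \eqref{eqn:constants}.

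I expect Step 1 to be the main obstacle: the decay mechanism in Step 2 is the standard and essentially mechanical part, whereas extracting an \emph{explicit}, dimension-independent lower bound $\gamma_{\tbH}$ on the smallest singular value of an \emph{indefinite} saddle-point matrix---using only boundedness, the range condition $\tbF\tbF^\top\succeq\gamma_{\tbF}\bI$, and the reduced Hessian condition $\ReH(\tbG,\tbF)\succeq\gamma_{\tbG}\bI$---requires careful quantitative control of the null-space/range splitting, and it is this analysis that is responsible for the intricate form of the constants in \eqref{eqn:constants}.
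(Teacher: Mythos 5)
The paper itself offers no argument for this theorem: its ``proof'' is a two-line citation to \cite[Theorem 3.6]{shin2022exponential} and \cite[Theorem A.3]{shin2022near}. Your two-step plan is essentially a reconstruction of how that cited result is established, so structurally you are on the right track: Step 1 (a dimension-independent bound $\|\tbH^{-1}\|\le\gamma_{\tbH}^{-1}$ extracted from $\|\tbH\|\le L_{\tbH}$, $\tbF\tbF^\top\succeq\gamma_{\tbF}\bI$, and $\ReH(\tbG,\tbF)\succeq\gamma_{\tbG}\bI$ via the null-space/range splitting, with $\overline{\mu}$ arising as the bound on the multiplier block) is exactly the ``uniform regularity implies uniform invertibility'' step of the reference, and Step 2 is the banded-inverse decay step.

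The one substantive inaccuracy is your claim that the Demko--Moss--Smith/Chebyshev argument reproduces ``exactly'' the constants of \eqref{eqn:constants}. It does not: best polynomial approximation of $1/\lambda$ on $[-L_{\tbH},-\gamma_{\tbH}]\cup[\gamma_{\tbH},L_{\tbH}]$ (reduced by oddness to approximating $1/\mu$ on $[\gamma_{\tbH}^2,L_{\tbH}^2]$) yields the per-index rate $\bigl((L_{\tbH}-\gamma_{\tbH})/(L_{\tbH}+\gamma_{\tbH})\bigr)^{1/2}$, which is strictly smaller than $\rho=\bigl((L_{\tbH}^2-\gamma_{\tbH}^2)/(L_{\tbH}^2+\gamma_{\tbH}^2)\bigr)^{1/2}$. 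The constants in \eqref{eqn:constants} instead come from the cruder first-order (Richardson/Neumann) argument: write $\tbH^{-1}=\tbH(\tbH^2)^{-1}$, note $\tbH^2$ is positive definite with spectrum in $[\gamma_{\tbH}^2,L_{\tbH}^2]$ so that $\|\bI-\tfrac{2}{L_{\tbH}^2+\gamma_{\tbH}^2}\tbH^2\|\le\rho^2$, expand $(\tbH^2)^{-1}$ in the corresponding Neumann series (the degree-$k$ term has block bandwidth $2k$), and account for the final multiplication by the block-tridiagonal $\tbH$; this bookkeeping produces precisely $c_1=L_{\tbH}/(\gamma_{\tbH}^2\rho)$ and $\rho$. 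Your Chebyshev route proves a stronger decay rate, but to conclude the theorem as literally stated you would still need to verify that the resulting prefactor does not exceed $c_1$ for all $|i-j|$ (the explicit two-interval approximation constants, which you have not supplied), or simply swap in the Neumann-series step. This is a fixable bookkeeping issue, not a flaw in the overall strategy; with that substitution your outline matches the cited proof.
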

\begin{proof}{Proof.}
  See \cite[Theorem 3.6]{shin2022exponential} and \cite[Theorem A.3]{shin2022near}.
  \qedhere
\end{proof}

The three conditions in Theorem \ref{thm:inv} are called the {\it uniform regularity} conditions. Theorem \ref{thm:inv} says that~the uniform regularity conditions are sufficient conditions for exponential decay of the inverse of a graph-induced sparse matrix. Furthermore, the decay bounds are expressed in terms of $L_{\tbH},\gamma_{\tbG},\gamma_{\tbF}$. Thus, in order to establish the exponential decay from Assumption \ref{ass:main}, it suffices to validate the uniform regularity conditions from Assumption \ref{ass:main}. However, the original problem in \eqref{eqn:origd} does not satisfy the uniform regularity conditions, since the probability of each scenario vanishes to zero as $T$ increases; one can easily see that $ReH(\tbG,\tbF)\succeq \gamma_{\tbG} \bI$ condition is violated. We address this issue by considering a {\it scaled} version of \eqref{eqn:origd}, which will be proven to be uniformly regular. The scaled problem is formulated as follows:
\begin{subequations}\label{eqn:spcd-scaled}
  \begin{align}
    \min_{\tbx,\tbu}\;\; & \frac{1}{2}\begin{bmatrix}
      \tbx\\
      \tbu\\
    \end{bmatrix}^\top\begin{bmatrix}
      \tbQ\\
      & \tbR\\
    \end{bmatrix}\begin{bmatrix}
      \tbx\\
      \tbu\\
    \end{bmatrix} - \begin{bmatrix}
      \tbq\\
      \tbr\\
    \end{bmatrix}^\top\begin{bmatrix}
      \tbx\\
      \tbu\\
    \end{bmatrix},\\
    \st\;\;& (\bI-\tbA)\tbx - \tbB\tbu  = \tbd\;\; \mid\; \tby,
  \end{align}
\end{subequations}
where $\tbx\coloneqq\{\tx_i\}_{i\in\cV}$ (similar for $\tbu$, $\tby$, $\tbd$, $\tbq$, $\tbr$), $\tbQ \coloneqq\{\tQ_{ij}\}_{i,j\in\cV}$ (similar for $\tbR$), and $\tbA\coloneqq\{\tA_{ij}\}_{i,j\in\cV}$ (similar for $\tbB$).
Here, $\tx_i$, $\tQ_{ij}$, and $\tA_{ij}$ are (scaled) variables defined as (similar for $\tu_i$, $\ty_i$, $\td_i$, $\tq_i$, $\tr_i$, $\tR_{ij}$, $\tB_{ij}$)
\begin{equation*}
  \tx_i\coloneqq (\pi_{i})^{1/2}\underline x_i,\quad \quad \tQ_{ij} \coloneqq
  \begin{cases}
    \underline Q_{i} & \text{if } i=j, \\
    0&\text{otherwise},
  \end{cases}\quad \quad
  \tA_{ij} \coloneqq
  \begin{cases}
    (\pi_{i\mid j})^{1/2}\underline A_{i}&\text{if } i\in c(j), \\
    0&\text{otherwise}.
  \end{cases}
\end{equation*}
We let $\tbG\coloneqq\diag(\tbQ,\tbR)$ and  $\tbF\coloneqq\begin{bmatrix}\bI-\tbA& -\tbB\end{bmatrix}$. Then, the first-order optimality condition for \eqref{eqn:spcd-scaled} is
\begin{equation}\label{eqn:lin}
  \tbH\tbz = \tbp ,
\end{equation}
where $\tbz\coloneqq\{\tz_i\}_{i\in\cV}$, $\tbp\coloneqq\{\tp_i\}_{i\in\cV}$, $\tbH\coloneqq\{\tH_{ij}\}_{i,j\in\cV}$ with $\tz_i \coloneqq [\tx_i;\tu_i;\ty_i]$, $\tp_i \coloneqq [\tq_i;\tr_i;\td_i]$, and
\begin{equation}\label{eqn:Hij}
  \tH_{ij}\coloneqq\begin{bmatrix} \tQ_{ij} & &\delta_{ij}I - \tA^\top_{ji}\\ & \tR_{ij} & -\tB^\top_{ji}\\ \delta_{ij}I-\tA_{ij}& -\tB_{ij}\end{bmatrix}.
\end{equation}
Here, $\delta_{ij}=1$ if $i=j$ and $0$ otherwise. We note that $\tbH$ can be permuted to $\begin{bmatrix}
  \tbG &\tbF^\top\\
  \tbF
\end{bmatrix}$.

We now validate the uniform regularity conditions for Problem \eqref{eqn:spcd-scaled} based on Assumption \ref{ass:main}.

\begin{lemma}\label{lem:ureg}
  Under Assumptions \ref{ass:fin} and \ref{ass:main} and given $\overline{\xi}_0\in\Xi_0$, the following statements hold with constants $L_{\tbH}$, $\gamma_{\tbF}$, $\gamma_{\tbG}$ defined in \eqref{eqn:constants}:
  \begin{enumerate}[label=(\alph*), leftmargin=1.5em, labelsep = 0.25em]
  \item\label{lem:ureg-ublh} $\|\tbH\|\leq L_{\tbH}$;
  \item\label{lem:ureg-licq} $\tbF\tbF^\top\succeq \gamma_{\tbF} \bI$;
  \item\label{lem:ureg-sosc} $ReH(\tbG,\tbF) \succeq \gamma_{\tbG} \bI$.
  \end{enumerate}
\end{lemma}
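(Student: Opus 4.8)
The plan is to verify the three uniform regularity conditions by first establishing a few scaled block-norm bounds and then feeding the deterministic stabilizability/detectability of Assumption~\ref{ass:main} (transported to the scenario tree by Lemma~\ref{lem:stochd}) into the LICQ and reduced-Hessian estimates. The preliminary step is to bound the scaled blocks. Since $\underline Q_i,\underline R_i,\underline A_i,\underline B_i$ inherit the $L$-bound from Assumption~\ref{ass:main}\ref{ass:main-bdd}, block-diagonality gives $\|\tbQ\|\vee\|\tbR\|\le L$. For $\tbA$ (and identically $\tbB$) the crucial input is the probability normalization $\sum_{i\in c(j)}\pi_{i\mid j}=1$ from Proposition~\ref{prop:scen}\ref{prop:scen-b}: since each node has a unique parent, $(\tbA\bv)_i=(\pi_{i\mid a(i)})^{1/2}\underline A_i v_{a(i)}$, so $\|\tbA\bv\|^2=\sum_{i}\pi_{i\mid a(i)}\|\underline A_i v_{a(i)}\|^2\le L^2\sum_{j}\|v_j\|^2\sum_{i\in c(j)}\pi_{i\mid j}\le L^2\|\bv\|^2$, whence $\|\tbA\|\le L$ and $\|\tbB\|\le L$. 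The same telescoping, applied to the scaled product of $k$ consecutive transitions, shows that whenever $\{\underline\Phi_i\}$ is $(L,\alpha)$-stable in the sense of Definition~\ref{def:stochd}\ref{def:stochd-stab}, the scaled operator $\tbPhi$ (with blocks $(\pi_{i\mid a(i)})^{1/2}\underline\Phi_i$) obeys $\|\tbPhi^k\|\le L\alpha^k$, so $\|(\bI-\tbPhi)^{-1}\|\le\sum_{k\ge 0}L\alpha^k=L/(1-\alpha)$. This resolvent bound is the workhorse for parts~\ref{lem:ureg-licq} and~\ref{lem:ureg-sosc}.

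For part~\ref{lem:ureg-ublh}, I would bound $\|\tbH\|$ directly from the block form \eqref{eqn:Hij} row by row. Writing $\tbz=[\tbx;\tbu;\tby]$ and using $\|\tbQ\|\vee\|\tbR\|\vee\|\tbA\|\vee\|\tbB\|\le L$, each block-row of $\tbH\tbz$ is dominated by a nonnegative linear combination of $\|\tbx\|,\|\tbu\|,\|\tby\|$, so $\|\tbH\tbz\|\le\|N\,(\|\tbx\|,\|\tbu\|,\|\tby\|)^\top\|$ for the symmetric nonnegative matrix $N=\begin{bmatrix}L&0&1+L\\0&L&L\\1+L&L&0\end{bmatrix}$. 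Consequently $\|\tbH\|\le\|N\|\le\|N\|_\infty=2L+1=L_{\tbH}$, the last step being the maximum absolute row sum.

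Part~\ref{lem:ureg-licq} is where stabilizability enters: I would establish uniform surjectivity of $\tbF$ by exhibiting, for each right-hand side $\bd$, a bounded preimage. Taking the stabilizing feedback $\{\underline K_i\}$ from Lemma~\ref{lem:stochd}\ref{lem:stochd:stabil} and setting $\tbu=-\tbK\tbx$ with $\tbK$ node-diagonal ($\|\tbK\|\le L$), we get $\tbF[\tbx;\tbu]=(\bI-\tbA+\tbB\tbK)\tbx=(\bI-\tbPhi)\tbx$, where $\tbPhi$ is the scaled closed loop, which is $(L,\alpha)$-stable. Hence $\tbx=(\bI-\tbPhi)^{-1}\bd$ solves $\tbF[\tbx;\tbu]=\bd$, and by the resolvent bound $\|[\tbx;\tbu]\|\le\sqrt{1+L^2}\,\|\tbx\|\le\frac{(1+L)L}{1-\alpha}\|\bd\|$. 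This gives $\sigma_{\min}(\tbF)\ge(1-\alpha)/((1+L)L)$, i.e. $\tbF\tbF^\top\succeq\gamma_{\tbF}\bI$.

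Part~\ref{lem:ureg-sosc}, the reduced-Hessian bound, is the main obstacle, and is where detectability together with $\tbR\succeq\gamma\bI$ is used. Since $ReH(\tbG,\tbF)=\bZ^\top\tbG\bZ\succeq\gamma_{\tbG}\bI$ is equivalent to $\tbx^\top\tbQ\tbx+\tbu^\top\tbR\tbu\ge\gamma_{\tbG}(\|\tbx\|^2+\|\tbu\|^2)$ for all $[\tbx;\tbu]\in\ker\tbF$, I would estimate $\|\tbx\|$ from the homogeneous dynamics $(\bI-\tbA)\tbx=\tbB\tbu$ using the detector gain of Lemma~\ref{lem:stochd}\ref{lem:stochd:detect}. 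Injecting the scaled output $\tbQ^{1/2}\tbx$ rewrites the dynamics as $(\bI-\tbPhi_o)\tbx=(\text{output injection})+\tbB\tbu$ with $\tbPhi_o$ the $(L,\alpha)$-stable observer closed loop, and the resolvent bound plus the probability-normalization estimate on the injection term yield $\|\tbx\|\le\frac{L}{1-\alpha}\big(L\|\tbQ^{1/2}\tbx\|+L\|\tbu\|\big)$. Combining $\|\tbQ^{1/2}\tbx\|^2=\tbx^\top\tbQ\tbx$ with $\gamma\|\tbu\|^2\le\tbu^\top\tbR\tbu$ then bounds $\|\tbx\|^2+\|\tbu\|^2$ by a multiple $\Theta\big(L^4/(\gamma(1-\alpha)^2)\big)$ of $\tbx^\top\tbQ\tbx+\tbu^\top\tbR\tbu$, matching $1/\gamma_{\tbG}$. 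The points I expect to require the most care are the correct scaling of the output-injection term (so the probability weights again telescope to $1$) and the constant bookkeeping needed to land on exactly the $\gamma_{\tbG}$ of \eqref{eqn:constants}.
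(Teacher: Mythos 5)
Your proposal is correct, and it rests on the same machinery as the paper's proof: your scaled block bound for $\tbA,\tbB$ via the normalization $\sum_{i\in c(j)}\pi_{i\mid j}=1$ and your resolvent bound $\|(\bI-\tbPhi)^{-1}\|\le L/(1-\alpha)$ under tree stability are exactly the paper's Lemma \ref{lem:basic}, and stabilizability/detectability enter through the same gain injections (Lemma \ref{lem:stochd}). The differences lie in how the final estimates are assembled, and they are worth noting. For part \ref{lem:ureg-ublh} your row-domination by the $3\times 3$ nonnegative matrix $N$ with $\|N\|\le\|N\|_\infty=2L+1$ replaces the paper's three-term triangle-inequality splitting; both give $L_{\tbH}$. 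For part \ref{lem:ureg-licq} the paper factors $\tbF$ as $[\,\bI-\tbPhi\;\;{-\tbB}\,]$ times a unit lower-triangular matrix built from $\tbK$ and chains smallest-eigenvalue inequalities (see \eqref{eqn:ureg-licq-1}); your bounded-right-inverse argument (set $\tbu=-\tbK\tbx$, solve via the resolvent, and use $1=\bv^\top\tbF\bw\le\|\tbF^\top\bv\|\,\|\bw\|$) is a more elementary route to the identical constant $\gamma_{\tbF}$. For part \ref{lem:ureg-sosc} the paper first uses the kernel relation and $\tbR\succeq\gamma\bI$ to reduce to a lower bound on $\bD^\top\bD$ with $\bD=[\tbQ^{1/2};\bI-\tbA]$ (see \eqref{eqn:ureg-sosc-1}) and then factors $\bD$ through the observer loop, whereas you estimate $\|\tbx\|$ directly on the kernel from $(\bI-\tbPhi_o)\tbx=\tbK\tbQ^{1/2}\tbx+\tbB\tbu$; carried through, this yields a reduced-Hessian constant of order $\gamma(1-\alpha)^2/(3L^4)$, which dominates $\gamma_{\tbG}=\gamma(1-\alpha)^2/\bigl(2(1+L)^2L^4\bigr)$ since $L\ge 1$, so the stated claim follows a fortiori. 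The one point you flagged—the scaling of the output-injection term—is resolved exactly as you anticipate: the injection matrix has blocks $(\pi_{i\mid j})^{1/2}\underline{K}_i\underline{Q}_j^{1/2}$, i.e., it factors as the subdiagonally scaled gain times the block-diagonal $\tbQ^{1/2}$, and the gain has norm at most $L$ by the same telescoping; hence there is no gap.
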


To prove Lemma \ref{lem:ureg}, we need a helper lemma.

\begin{lemma}\label{lem:basic}
  Suppose $\tbPhi\coloneqq\{\tPhi_{ij}\}_{i,j\in\cV}$ and $\tPhi_{ij}\coloneqq
  \begin{cases}
    \pi_{i\mid j}^{1/2}\underline \Phi_{i}&\text{if }i\in c(j) \\
    0&\text{otherwise}
  \end{cases}$. The following statements hold.
  \begin{enumerate}[label=(\alph*), leftmargin=1.5em, labelsep = 0.25em]
  \item\label{lem:basic-bdd} If $\{\underline \Phi_i\}_{i\in\cV_{1:T}}$ is $L$-bounded, then $\tbPhi$ is $L$-bounded.
  \item\label{lem:basic-stab} If $\{\underline \Phi_i\}_{i\in\cV_{1:T}}$ is $(L,\alpha)$-stable, then $(\bI-\tbPhi)^{-1}$ is $L/(1-\alpha)$-bounded.
  \end{enumerate}
\end{lemma}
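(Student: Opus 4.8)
The plan is to exploit that $\tbPhi$ acts as a weighted parent-to-child shift on the scenario tree. Since each node $i\neq 0$ has a unique parent $a(i)$ and $\tPhi_{ij}\neq\bzero$ only when $j=a(i)$, the action on any $\bv=\{v_j\}_{j\in\cV}$ collapses to
\[
(\tbPhi\bv)_i = \pi_{i\mid a(i)}^{1/2}\,\underline\Phi_i\, v_{a(i)},\qquad i\in\cV\setminus\{0\},
\]
with $(\tbPhi\bv)_0=\bzero$. For part \ref{lem:basic-bdd} I would bound $\|\underline\Phi_i\|\le L$, sum the squared nodal norms, and regroup by parent:
\[
\|\tbPhi\bv\|^2 \;\le\; L^2\sum_{i\neq 0}\pi_{i\mid a(i)}\|v_{a(i)}\|^2 \;=\; L^2\sum_{j\in\cV}\|v_j\|^2\sum_{i\in c(j)}\pi_{i\mid j}.
\]
The inner sum equals $1$ for every non-leaf $j$ and is empty for leaves, by Proposition \ref{prop:scen}\ref{prop:scen-b} together with $\pi_{i\mid j}=\pi_i/\pi_j$; hence $\|\tbPhi\bv\|^2\le L^2\|\bv\|^2$, which gives $\|\tbPhi\|\le L$.

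For part \ref{lem:basic-stab} I would first note that $\tbPhi$ raises the stage index by one, so $\tbPhi^{T+1}=\bzero$ and $\bI-\tbPhi$ is invertible with $(\bI-\tbPhi)^{-1}=\sum_{k=0}^{T}\tbPhi^k$. Iterating the shift, the value of $\tbPhi^k\bv$ at a node $i$ with $t(i)\ge k$ is produced by running $\underline\Phi$ along the length-$k$ path from the $k$-th ancestor up to $i$, while the conditional-probability weights telescope, yielding
\[
(\tbPhi^k\bv)_i = \pi_{i\mid a^k(i)}^{1/2}\Big(\textstyle\prod\underline\bPhi_{a^k(i)\rightharpoonup i}\Big)\,v_{a^k(i)},
\]
where $a^k$ denotes the $k$-fold parent map. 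By $(L,\alpha)$-stability (Definition \ref{def:stochd}\ref{def:stochd-stab}) the matrix product has norm at most $L\alpha^k$, and regrouping the squared norms by the ancestor $j=a^k(i)$, together with the identity $\sum_{i\in c^k(j)}\pi_{i\mid j}=1$ (the $k$-fold version of Proposition \ref{prop:scen}\ref{prop:scen-b}), gives $\|\tbPhi^k\|\le L\alpha^k$ for all $k\ge 0$. Summing the geometric series and using $L\ge 1$ then yields
\[
\|(\bI-\tbPhi)^{-1}\| \;\le\; \sum_{k=0}^{T} L\alpha^k \;\le\; \frac{L}{1-\alpha}.
\]

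The obstacles here are essentially bookkeeping: verifying the telescoping $\prod_{l=1}^{k}\pi_{i_l\mid i_{l-1}}=\pi_{i\mid a^k(i)}$ along a path, and the generation-sum identity $\sum_{i\in c^k(j)}\pi_{i\mid j}=1$, both of which I would establish by a short induction on $k$ from Proposition \ref{prop:scen}\ref{prop:scen-b}. The one genuinely substantive point is recognizing that the probability weights conspire to keep the \emph{standard} induced $2$-norm of $\tbPhi^k$ controlled by the deterministic stability constant $L\alpha^k$, uniformly in the branching of the tree; this is precisely the phenomenon that makes the scaled problem \eqref{eqn:spcd-scaled} uniformly regular rather than degenerate.
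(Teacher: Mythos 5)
Your proof is correct and follows essentially the same route as the paper: both arguments rest on the explicit form of $\tbPhi^t$ with telescoped conditional probabilities, the bound $\|\tbPhi^t\|\leq L\alpha^t$ obtained from $(L,\alpha)$-stability together with $\sum_{i\in c^t(j)}\pi_{i\mid j}=1$, and the finite Neumann series $(\bI-\tbPhi)^{-1}=\sum_{k=0}^T\tbPhi^k$ with $L\geq 1$ absorbing the identity term. The only cosmetic difference is that you bound the norms via the action on nodal vectors and regroup by ancestor, whereas the paper bounds the column blocks $(\tbPhi^t)_{c^t(j),j}$ and invokes the block-subdiagonal structure; these are the same estimate written in two equivalent ways, and your explicit remark that $\tbPhi^{T+1}=\bzero$ justifies the Neumann series slightly more carefully than the paper does.
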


\begin{proof}{Proof.}

  For Lemma \ref{lem:basic}\ref{lem:basic-bdd}, we note that the nonzero blocks of $\tbPhi$ consist of $\tbPhi_{c(j),j}$ for $j\in\cV_{0:T-1}$. For a fixed $j\in\cV_{0:T-1}$, we have
  \begin{equation*}
    \|\tbPhi_{c(j),j}\|\leq \left(\sum_{i\in c(j)} \|\tPhi_{ij}\|^2\right)^{1/2} = \left(\sum_{i\in c(j)} \pi_{i \mid j}\cdot \|\underline\Phi_{i}\|^2\right)^{1/2}\leq L,
  \end{equation*}
  where the first inequality follows from the property of the induced 2-norm, the equality follows from the definition of $\tbPhi$, and the last inequality follows from $L$-boundedness of $\underline\Phi_i$ and $\sum_{i\in c(j)}\pi_i = \pi_j$ (cf. Proposition \ref{prop:scen}\ref{prop:scen-b}). Since the nonzero blocks form a block-subdiagonal structure, we have
  \begin{equation*}
    \|\tbPhi\|\leq \max_{j\in\cV_{0:T-1}}\|\tbPhi_{c(j),j}\|\leq L.
  \end{equation*}
  Thus, we know $\tbPhi$ is $L$-bounded.

  For Lemma \ref{lem:basic}\ref{lem:basic-stab}, we let $\tbPhi^t=\{(\tbPhi^t)_{ij}\}_{i,j\in\cV}$ where
  \begin{equation}\label{eqn:tbPhit}
    (\tbPhi^t)_{ij} =
    \begin{cases}
      (\pi_{i\mid j})^{1/2}\prod\underline\bPhi_{j\rightharpoonup i} & \text{if } i\in c^t(j), \\
      0 & \text{otherwise}.
    \end{cases}
  \end{equation}
  That is, $\tbPhi^t$ has a block-subdiagonal structure whose nonzero blocks consist of $(\tbPhi^t)_{c^t(j),j}$ over $j\in\cV_{0:T-t}$. Furthermore,
  \begin{equation*}
    \|(\tbPhi^t)_{c^t(j),j}\|\leq \left( \sum_{i\in c^t(j)}\|(\tbPhi^t)_{ij}\|^2\right)^{1/2} \stackrel{\eqref{eqn:tbPhit}}{=} \left(\sum_{i\in c^t(j)} \pi_{i\mid j} \cdot \left\|\prod\underline\bPhi_{j\rightharpoonup i}\right\|^2\right)^{1/2} \leq L\alpha^t,
  \end{equation*}
  where the first inequality follows from the property of induced 2-norm, and the last inequality follows from the $(L,\alpha)$-stability of $\{\underline\Phi_{i}\}_{i\in\cV_{1:T}}$ and $\sum_{i\in c^t(j)}\pi_i = \pi_j$ (cf. Proposition \ref{prop:scen}\ref{prop:scen-b}). This implies $\|\tbPhi^t\|\leq L\alpha^t$. Using $(\bI-\tbPhi)^{-1}=\bI + \tbPhi + \cdots \tbPhi^{T}$, we complete the proof by observing
  \begin{align*}
    \left\|(\bI-\tbPhi)^{-1}\right\|\leq \left\|\bI\right\| + \left\|\tbPhi\right\| + \cdots \left\|\tbPhi^{T}\right\|\leq  1  + L\alpha + \cdots L\alpha^T\leq \frac{L}{1-\alpha}.
  \end{align*}
  Here, the second inequality follows from $L\geq 1$. 
  \qedhere
\end{proof}

Now we are ready to prove Lemma \ref{lem:ureg}.

\begin{proof}{Proof of Lemma \ref{lem:ureg}.}

  For Lemma \ref{lem:ureg}\ref{lem:ureg-ublh}, we first observe that $\tbH$ can be permuted to the form of
  \begin{equation*}
    \begin{bmatrix}
      \tbQ&&\bI-\tbA^\top\\
      &\tbR&-\tbB^\top\\
      \bI-\tbA&-\tbB
    \end{bmatrix}.
  \end{equation*}
  By the block diagonality and Assumption \ref{ass:main}\ref{ass:main-bdd}, $\tbQ$ and $\tbR$ are $L$-bounded. By Assumption \ref{ass:main}\ref{ass:main-bdd} and Lemma \ref{lem:basic}\ref{lem:basic-bdd}, $\tbA$ and $\tbB$ are $L$-bounded. The result in Lemma \ref{lem:ureg}\ref{lem:ureg-ublh} follows from
  \begin{equation*}
    \|\tbH\|\leq
    \left\|\begin{bmatrix} \tbQ\\&&-\tbB^\top\\&-\tbB \end{bmatrix}\right\| +
    \left\|\begin{bmatrix} &&\bI\\&\\\bI \end{bmatrix}\right\|+ 
    \left\|\begin{bmatrix} &&-\tbA^\top\\&\tbR\\-\tbA \end{bmatrix}\right\|
    \leq 2L +1.
  \end{equation*}
  For Lemma \ref{lem:ureg}\ref{lem:ureg-licq}, we define $\tbK\coloneqq\{\tK_{ij}\}_{i,j\in\cV}$ where 
  $\tK_{ij} =
  \begin{cases}
    \underline K_{i}&\text{if } i=j\in \cV_{0:T-1} \\
    0&\text{otherwise}\\
  \end{cases}$. Here, $\{\underline K_i\}_{i\in\cV_{0:T-1}}$ is the $(L,\alpha)$-stabilizing feedback for $\{\underline A_i\}_{i\in\cV_{1:T}}$ and $\{\underline B_i\}_{i\in\cV_{1:T}}$, whose existence follows from Assumption \ref{ass:main}\ref{ass:main-stab} and Lemma \ref{lem:stochd}\ref{lem:stochd:stabil}. Now, we observe that $\tbF=\begin{bmatrix}\bI-\tbPhi&-\tbB\\
  \end{bmatrix}\begin{bmatrix}\bI\\\tbK&\bI\\\end{bmatrix}$, where $\tbPhi\coloneqq\{\tPhi_{ij}\}_{i,j\in\cV}$ and
  \begin{equation*}
    \tPhi_{ij} \coloneqq
    \begin{cases}
      (\pi_{i\mid j})^{1/2} (\underline  A_i-\underline  B_i \underline  K_{j}) &\text{if }i\in c(j), \\
      \bzero & \text{otherwise}.
    \end{cases}
  \end{equation*}
  This implies that ($\ulambda(\cdot)$ denotes the smallest eigenvalue of the argument)
  \begin{align}\label{eqn:ureg-licq-1}
    \tbF\tbF^\top & \succeq \ulambda\left( \begin{bmatrix}
        \bI-\tbPhi&-\tbB\\
      \end{bmatrix}
    \begin{bmatrix}
      \bI-\tbPhi&-\tbB\\
    \end{bmatrix}^\top
    \right)	\ulambda\left( \begin{bmatrix}
        \bI\\
        \tbK&\bI\\
      \end{bmatrix}\begin{bmatrix}
        \bI\\
        \tbK&\bI\\
      \end{bmatrix}^\top
    \right)\bI \nonumber\\
                  &\succeq
                    \ulambda\left(
                    (\bI-\tbPhi)(\bI-\tbPhi)^\top + \tbB\tbB^\top
                    \right)
                    \left\|
                    \begin{bmatrix}
                      \bI\\
                      \tbK&\bI\\
                    \end{bmatrix}^{-\top}
    \begin{bmatrix}
      \bI\\
      \tbK&\bI\\
    \end{bmatrix}^{-1}
    \right\|^{-1}\bI \nonumber\\
                  &\succeq
                    \ulambda\left(
                    (\bI-\tbPhi)(\bI-\tbPhi)^\top 
                    \right)
                    \left\|
                    \begin{bmatrix}
                      \bI\\
                      -\tbK&\bI\\
                    \end{bmatrix}
    \right\|^{-2}\bI \nonumber\\
                  &\succeq \|(\bI-\tbPhi)^{-1}\|^{-2} (1 + \|\tbK\|)^{-2}\bI.
  \end{align}
  By the block diagonality and $L$-boundedness of $\{\underline K_i\}_{i\in\cV_{0:T-1}}$, $\tbK$ is $L$-bounded. Since $\{\underline A_i-\underline B_i \underline K_{a(i)}\}_{i\in\cV_{1:T}}$ is $(L,\alpha)$-stable, Lemma \ref{lem:basic}\ref{lem:basic-stab} shows that $(\bI-\tbPhi)^{-1}$ is $L/(1-\alpha)$-bounded. Combining with \eqref{eqn:ureg-licq-1}, we complete the proof for Lemma  \ref{lem:ureg}\ref{lem:ureg-licq}.

  For Lemma \ref{lem:ureg}\ref{lem:ureg-sosc}, we consider any vectors $(\bx,\bu)\neq \bzero$ such that
  \begin{equation}\label{eqn:ureg-ass}
    (\bI-\tbA)\bx+\tbB\bu=\bzero.
  \end{equation}
  Such vectors must exist due to Lemma \ref{lem:ureg}\ref{lem:ureg-licq}. We note that 
  \begin{align*}
    \bx^\top \tbQ\bx + \bu^\top\tbR\bu
    &\geq \bx^\top \tbQ\bx +  \gamma \|\bu\|^2 \\
    &\geq \bx^\top \tbQ\bx +  (\gamma/2L^2)\|\tbB\bu\|^2 + (\gamma/2) \|\bu\|^2\\
    &\stackrel{\mathclap{\eqref{eqn:ureg-ass}}}{\geq} \bx^\top \tbQ\bx +  (\gamma/2L^2)\|(\bI-\tbA)\bx\|^2 + (\gamma/2) \|\bu\|^2\\
    &\geq (\gamma/2L^2) \ulambda(\tbQ+(\bI-\tbA)^\top(\bI-\tbA))\|\bx\|^2+ (\gamma/2)\|\bu\|^2,
  \end{align*}
  where the first inequality follows from Assumption \ref{ass:main}\ref{ass:main-cvx}, the second inequality follows from the fact that $\tbB$ is $L$-bounded (cf Lemma \ref{lem:basic}\ref{lem:basic-bdd}), and the last inequality follows from the property of the smallest eigenvalue and the fact that $L\geq 1$, $\alpha\in(0,1)$, and $\gamma\in(0,1]$.~Furthermore, using $L\geq 1$, $\alpha\in(0,1)$, and $\gamma\in(0,1]$, we can see that, to prove Lemma \ref{lem:ureg}\ref{lem:ureg-sosc}, it suffices to show
  \begin{equation}\label{eqn:ureg-sosc-1}
    \bD^\top\bD\succeq (1-\alpha^2)/(1+L)^2L^2\bI,
  \end{equation}
  where $\bD\coloneqq\begin{bmatrix} \tbQ^{1/2}\\ \bI-\tbA \end{bmatrix}$. Similar to the proof of Lemma \ref{lem:ureg}\ref{lem:ureg-licq}, we let $\tbK\coloneqq\{\tK_{ij}\}_{i,j\in\cV}$ with
  \begin{equation*}
    \tK_{ij} = \begin{cases}
      (\pi_{i\mid j})^{1/2}\underline K_{i}&\text{if } i\in c(j), \\
      0&\text{otherwise}, \\
    \end{cases}
  \end{equation*}
  where $\{\underline K_i\}_{i\in\cV_{1:T}}$ is the $(L,\alpha)$-detectable observer for $(\{\underline A_i\}_{i\in\cV_{1:T}},\{\underline Q_i^{1/2})\}_{i\in\cV_{0:T-1}})$. The existence of $\{\underline K_i\}_{i\in\cV_{1:T}}$ follows from Assumption \ref{ass:main}\ref{ass:main-detect} and Lemma \ref{lem:stochd}\ref{lem:stochd:detect}. We can see that $\bD=\begin{bmatrix}\bI\\-\tbK&\bI\end{bmatrix}\begin{bmatrix}\tbQ^{1/2}\\\bI-\tbPhi\end{bmatrix}$, where $\tbPhi\coloneqq\{\tPhi_{ij}\}_{i,j\in\cV}$ and
  \begin{equation*}
    \tPhi_{ij}=
    \begin{cases}
      (\pi_{i\mid j})^{1/2}(\underline A_i- \underline K_{i}\underline Q^{1/2}_{j})&\text{if }i\in c(j),  \\
      \bzero & \text{otherwise}.
    \end{cases}
  \end{equation*}
  By Lemma \ref{lem:basic}\ref{lem:basic-bdd}, $\|\tbK\|\leq L$. Since $\{\underline A_i-\underline K_{i}\underline Q^{1/2}_{a(i)} \}_{i\in\cV_{1:T}}$ is $(L,\alpha)$-stable, Lemma \ref{lem:basic}\ref{lem:basic-stab} shows that $(\bI-\tbPhi)^{-1}$ is $L/(1-\alpha)$-bounded. Similar to \eqref{eqn:ureg-licq-1}, we can show \eqref{eqn:ureg-sosc-1} holds, and complete the proof.
  \qedhere
\end{proof}

Combining Lemma \ref{lem:ureg} and Theorem \ref{thm:inv}, we can prove Theorem \ref{thm:decay-ext}.

\begin{proof}{Proof of Theorem \ref{thm:decay-ext}.}
  It follows from Lemma \ref{lem:ureg} and \cite[Lemma 16.1]{nocedal1999numerical} that the scaled problem \eqref{eqn:spcd-scaled} has a unique global primal-dual solution $\tbz^\star$, and there exists $\tbOmega^\star =(\tbH)^{-1}$ such that $\tbz^\star = \tbOmega^\star\tbp$. Furthermore, from Theorem \ref{thm:inv} and Lemma \ref{lem:ureg}, we have that $\|\tbOmega^\star_{\cV_t,\cV_{t'}}\|\leq c_1\rho^{|t-t'|}$. Thus, by unscaling the problem, we know there exists a unique global primal-dual solution $\underline\bz^\star$ of \eqref{eqn:origd}; further, from the one-to-one correspondence between the scaled variables ($\tbz$ and $\tbp$) and non-scaled variables ($\underline\bz$ and $\underline\bp$), one can see that the unscaled solution satisfies $\underline\bz^\star = \underline\bOmega^\star\underline\bp$ for $\underline\bOmega^\star\coloneqq\{\pi^{-1/2}_{i|j} \tOmega_{ij}^\star\}_{i,j\in\cV}$. From Proposition \ref{prop:norm}, one can see that $\|\underline\bOmega^\star_{\cV_t,\cV_{t'}}\|_{\bpi}=\|\tbOmega^\star_{\cV_t,\cV_{t'}}\|\leq c_1\rho^{|t-t'|}$.  The result for the primal solution directly follows from the observation that $\underline\bPsi^\star$ is a submatrix of $\underline\bOmega^\star$. 
  \qedhere
\end{proof}

We now adapt Theorem \ref{thm:decay-ext} to Problem \eqref{eqn:spcd}. We note that Problems \eqref{eqn:origd} and \eqref{eqn:spcd} have the same structure, although they are formulated over a different subset of nodes on the scenario tree. Thus, the result in~Theorem \ref{thm:decay-ext} can be directly generalized to Problem \eqref{eqn:spcd}.

\begin{theorem}\label{thm:decay-ext-sub}
  
  Under Assumptions \ref{ass:fin} and \ref{ass:main} and given $\overline{\xi}_0\in\Xi_0$, $\overline{w}_{a(k)}=\bzero$, $k\in\cV$, and $W\geq 0$, there exist a unique primal-dual solution $\underline\bz^{(k,W)}\coloneqq\{\underline z^{(k,W)}_i\}_{i\in\cV^{(k)}_{t(k):t(k)+W}}$ of \eqref{eqn:spcd}, $\underline\bOmega^{(k,W)}\coloneqq\{\underline\Omega^{(k,W)}_{ij}\}_{i,j\in\cV^{(k)}_{t(k):t(k)+W}}$, and $\underline\bPsi^{(k,W)}\coloneqq\{\underline\Psi^{(k,W)}_{ij}\}_{i,j\in\cV^{(k)}_{t(k):t(k)+W}}$ such that
  \begin{equation*}
    \begin{aligned}
      & \underline\bz^{(k,W)}=\underline\bOmega^{(k,W)} \underline\bp_{\cV^{(k)}_{t(k):t(k)+W}}, \\
      & \underline\bw^{(k,W)}=\underline\bPsi^{(k,W)} \underline\bp_{\cV^{(k)}_{t(k):t(k)+W}},\\
      &\|\underline \bOmega^{(k,W)}_{\cV^{(k)}_{t},\cV^{(k)}_{t'}}\|_{\bpi} \vee \|\underline \bPsi^{(k,W)}_{\cV^{(k)}_{t},\cV^{(k)}_{t'}}\|_{\bpi}\leq c_1 \rho^{|t-t'|},\quad \forall t,t'\in\cT_{t(k):t(k)+W},
    \end{aligned}
  \end{equation*}
  where $c_1$ and $\rho$ are defined in \eqref{eqn:constants}. 
\end{theorem}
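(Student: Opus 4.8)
The plan is to exploit the fact that Problem \eqref{eqn:spcd} is \emph{structurally identical} to Problem \eqref{eqn:origd}, only posed over the subtree $\cV^{(k)}_{t(k):t(k)+W}$ rather than the full tree $\cV$, so that Theorem \ref{thm:decay-ext} applies almost verbatim once the subtree is recognized as a bona fide scenario tree. First I would verify that the subtree rooted at $k$, equipped with the conditional nodal probabilities $\{\pi_{i\mid k}\}_{i\in\cV^{(k)}_{t(k):t(k)+W}}$ and the inherited nodal realizations, is a stage-$W$ scenario tree (after relabeling $k$ as the root). Indeed, every leaf lies at uniform distance $W$ from $k$ by construction, and dividing the three identities of Proposition \ref{prop:scen}\ref{prop:scen-b} through by $\pi_k$ yields $\pi_{k\mid k}=1$, $\pi_{i\mid k}>0$, and $\sum_{j\in c(i)}\pi_{j\mid k}=\pi_{i\mid k}$, which are exactly the defining properties for this smaller tree.

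Next I would check that the hypotheses feeding the proof of Theorem \ref{thm:decay-ext}---namely the tree-based boundedness, stabilizability, and detectability of Definition \ref{def:stochd}, which are what Lemma \ref{lem:ureg} actually consumes---restrict from $\cV$ to the subtree. Boundedness and convexity (Assumption \ref{ass:main}\ref{ass:main-bdd}--\ref{ass:main-cvx}) are immediate since every node of the subtree is a node of $\cV$. For stabilizability and detectability, I would simply restrict the full-tree feedback sequences $\{\underline K_i\}$ furnished by Lemma \ref{lem:stochd} to the relevant nodes of the subtree. The required $(L,\alpha)$-stability of the resulting closed-loop transition products is preserved, because any path $i\rightharpoonup j$ lying in the subtree is also a path in $\cV$, so Definition \ref{def:stochd}\ref{def:stochd-stab} on the full tree gives $\|\prod\underline\bPhi_{i\rightharpoonup j}\|\le L\alpha^{t(j)-t(i)}$ directly, with the stage gap $t(j)-t(i)$ equal to the relabeled stage gap in the subtree. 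With these in hand, the scaled subproblem (built exactly as in \eqref{eqn:spcd-scaled}, but with $\pi_{i\mid k}$ in place of $\pi_i$) satisfies the uniform regularity conditions of Lemma \ref{lem:ureg}, and Theorem \ref{thm:inv} delivers the $c_1\rho^{|t-t'|}$ decay for the inverse of its KKT matrix. Unscaling as in the proof of Theorem \ref{thm:decay-ext} then yields the existence and uniqueness of $\underline\bz^{(k,W)}$ together with the factorizations $\underline\bz^{(k,W)}=\underline\bOmega^{(k,W)}\underline\bp_{\cV^{(k)}_{t(k):t(k)+W}}$ and $\underline\bw^{(k,W)}=\underline\bPsi^{(k,W)}\underline\bp_{\cV^{(k)}_{t(k):t(k)+W}}$.

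The one point requiring care---and the step I expect to be the main (if mild) obstacle---is reconciling the norm: Theorem \ref{thm:decay-ext} applied to the subtree naturally produces the bound in the subtree's probability-scaled norm $\|\cdot\|_{\bpi_{|k}}$ built from $\{\pi_{i\mid k}\}$, whereas the statement is phrased in terms of $\|\cdot\|_{\bpi}$. Here I would invoke the conditioning-invariance of the probability-scaled operator norm: by Proposition \ref{prop:norm}\ref{prop:norm-a}, $\|\bM_{\cV',\cV''}\|_{\bpi}=\|\{\pi_{i\mid j}^{1/2}M_{ij}\}\|$ depends on the probabilities only through the pairwise ratios $\pi_{i\mid j}$, and since $\pi_{i\mid k}/\pi_{j\mid k}=\pi_i/\pi_j=\pi_{i\mid j}$ for descendants of $k$, the two norms coincide on every submatrix supported on $\cV^{(k)}\times\cV^{(k)}$. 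Hence $\|\underline\bOmega^{(k,W)}_{\cV^{(k)}_t,\cV^{(k)}_{t'}}\|_{\bpi}=\|\underline\bOmega^{(k,W)}_{\cV^{(k)}_t,\cV^{(k)}_{t'}}\|_{\bpi_{|k}}\le c_1\rho^{|t-t'|}$, and the same bound for $\underline\bPsi^{(k,W)}$ follows since it is a submatrix of $\underline\bOmega^{(k,W)}$, completing the argument.
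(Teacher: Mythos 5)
Your proposal is correct and follows essentially the same route as the paper, which in fact states Theorem \ref{thm:decay-ext-sub} with no written proof, justifying it only by the remark that \eqref{eqn:spcd} and \eqref{eqn:origd} share the same structure; your write-up supplies exactly the details that remark presupposes (the subtree with conditional probabilities $\pi_{i\mid k}$ is itself a scenario tree, Assumption \ref{ass:main} restricts to it, and the subtree's probability-scaled norm agrees with $\|\cdot\|_{\bpi}$ because both depend only on the ratios $\pi_{i\mid j}$). The only nitpick is that when $t(k)+W>T$ the subtree's leaves sit at uniform depth $T-t(k)$ rather than $W$ (recall $\cT_{t(k):t(k)+W}$ truncates at $T$), but this is harmless and the argument goes through unchanged with the effective horizon.
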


\section{Proofs}\label{apx:proofs}

\subsection{Proof of Proposition \ref{prop:just}}\label{apx:just}

We first state a helper lemma.

\begin{lemma}\label{lem:help-3}

  For any submultiplicative matrix norm $\|\cdot\|$, we suppose that deterministic sequences $\{\Phi_t\}_{t\in\mathbb{I}_{[t'+1,t'']}}$ and $\{\Phi'_t\}_{t\in\mathbb{I}_{[t'+1,t'']}}$ satisfy $\|\prod_{t=t'+1}^{t''}\Phi_{t}\|\leq L\alpha^{t''-t'}$ and $\|\Phi_t-\Phi'_t\|\leq \Delta$ for $t\in\mathbb{I}_{[t'+1,t'']}$ and
  $\Delta\coloneqq(\alpha^{1/2}-\alpha)/L$. Then, we have $\left\|\prod_{t=t'+1}^{t''} \Phi'_{t}\right\|\leq L\alpha^{(t''-t')/2}$.

\end{lemma}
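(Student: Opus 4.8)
The plan is to prove the bound by strong induction on the length $m \coloneqq t''-t'$ of the product, establishing the stronger statement that $\bigl\|\prod_{t=a+1}^{a+m}\Phi'_t\bigr\|\le L\alpha^{m/2}$ for \emph{every} subinterval $[a+1,a+m]\subseteq[t'+1,t'']$. The induction hypothesis will be applied to the shorter primed products produced by a telescoping expansion, so the argument requires decay not merely of the full unprimed product but of all unprimed subproducts $\prod_{t=a+1}^{b}\Phi_t$; this is exactly what is available in the intended application (Proposition \ref{prop:just}\ref{prop:just-stab}), where the unprimed matrices all coincide with a single $(L,\alpha)$-stable $\Phi$, so that $\bigl\|\prod_{t=a+1}^{b}\Phi_t\bigr\|=\|\Phi^{b-a}\|\le L\alpha^{b-a}$. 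The base case $m=0$ reduces to $\|I\|=1\le L$, which is precisely where the (implicit) hypothesis $L\ge 1$ enters.

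For the induction step I would use the telescoping identity
\[
  \prod_{t=a+1}^{a+m}\Phi'_t-\prod_{t=a+1}^{a+m}\Phi_t=\sum_{s=a+1}^{a+m}\Bigg(\prod_{t=s+1}^{a+m}\Phi'_t\Bigg)(\Phi'_s-\Phi_s)\Bigg(\prod_{t=a+1}^{s-1}\Phi_t\Bigg),
\]
which splits each summand into a primed prefix (of length $a+m-s<m$, to which the induction hypothesis applies), the single-step perturbation $\Phi'_s-\Phi_s$ (of norm at most $\Delta$), and an unprimed suffix (a subproduct carrying the assumed decay). Writing $k=s-a$ and taking norms, the right-hand side is bounded by
\[
  L\alpha^m+\sum_{k=1}^{m}L\alpha^{(m-k)/2}\cdot\Delta\cdot L\alpha^{k-1}=L\alpha^m+L^2\Delta\,\alpha^{m/2-1}\sum_{k=1}^m\alpha^{k/2}.
\]
The crucial point is that the constants are tuned so that the geometric sum collapses \emph{exactly}: using $L\Delta=\alpha^{1/2}-\alpha=\alpha^{1/2}(1-\alpha^{1/2})$ together with $\sum_{k=1}^m\alpha^{k/2}=\alpha^{1/2}(1-\alpha^{m/2})/(1-\alpha^{1/2})$, the second term equals $L\alpha^{m/2}(1-\alpha^{m/2})=L\alpha^{m/2}-L\alpha^m$, so the whole bound telescopes to $L\alpha^m+L\alpha^{m/2}-L\alpha^m=L\alpha^{m/2}$, closing the induction.

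The main obstacle is not any individual estimate but making this bookkeeping close exactly: a cruder treatment of the finite geometric sum (for instance summing to infinity) loses a factor and the argument fails, so one must retain the finite sum and exploit the precise value of $\Delta$. A secondary subtlety worth flagging is that the full-range decay hypothesis alone is \emph{not} sufficient for the conclusion — a large early factor compensated by a small later one would break the claim — so the proof genuinely relies on decay of the intermediate unprimed subproducts appearing in the telescoping, which is furnished for free in the application where the sequence $\{\Phi_t\}$ is constant.
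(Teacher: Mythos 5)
Your proof is correct, but it takes a genuinely different route from the paper's. The paper proves the lemma in one shot, without induction: it writes $\prod_{t=t'+1}^{t''}\Phi'_t=\prod_{t=t'+1}^{t''}\{(\Phi'_t-\Phi_t)+\Phi_t\}$, expands this into all $2^{t''-t'}$ terms, groups them by the number $\tau$ of difference factors, bounds each such term by $\Delta^\tau L^{\tau+1}\alpha^{t''-t'-\tau}$ (each maximal unprimed run between difference positions contributes $L\alpha^{\text{length}}$), and then closes with the binomial theorem: $\sum_{\tau}\binom{t''-t'}{\tau}\Delta^\tau L^{\tau+1}\alpha^{t''-t'-\tau}=L\alpha^{t''-t'}(1+L\Delta/\alpha)^{t''-t'}=L\alpha^{(t''-t')/2}$, since the choice of $\Delta$ gives $1+L\Delta/\alpha=\alpha^{-1/2}$ exactly. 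Your argument replaces this higher-order expansion with a first-order telescoping identity plus strong induction on the product length; the exact collapse the paper gets from the binomial theorem, you get from the finite geometric series (and your computation does close exactly, as I verified). The trade-off: the paper avoids induction but must bookkeep multi-factor terms and the runs between difference positions, whereas your expansion has only one difference factor per summand, at the price of carrying a strengthened inductive claim over all subintervals. Note that both arguments consume the hypothesis in the same, slightly stronger form: not just the full product but all unprimed subproducts must satisfy the decay bound (the paper needs this for the runs between difference factors, you need it for the unprimed suffixes), and both implicitly use $\|I\|\le L$, i.e., $L\ge 1$ for operator norms. Your closing observation that the literal hypothesis (full product only) would be insufficient is correct and applies equally to the paper's own proof; in the actual applications (Proposition \ref{prop:just}, where $\Phi_t\equiv\Phi$ is $(L,\alpha)$-stable, and Theorem \ref{thm:stab}, where Lemma \ref{lem:help-2} bounds all subproducts of $\underline\bS^{(\infty)}$) the subproduct bounds are indeed available, exactly as you note.
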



\begin{proof}{Proof.}
	
  We note that
  \begin{align*}
    \left\|\prod_{t={t'+1}}^{t''}\Phi'_{t}\right\|
    & \leq  \left\|\prod_{t={t'+1}}^{t''} \cbr{(\Phi'_{t} - \Phi_{t}) + \Phi_{t} } \right\|\\
    &\leq \sum_{\tau=0}^{t''-t'} \sum_{\{t_1,\cdots,t_\tau\}\subseteq \mathbb{I}_{[t'+1,t'']}}   
      \|\Phi_{t''}\cdots \Phi_{t_{\tau}+1} \|\cdots
      \| \|\Phi_{t_2-1}\cdots \Phi_{t_1+1}\| \|\Phi'_{t_1}-\Phi_{t_1}\|  \|\Phi_{t_1-1}\cdots \Phi_{t'+1}\|\\
    &\leq \sum_{\tau=0}^{t''-t'} {t''-t' \choose \tau}  \Delta^\tau L^{\tau+1} \alpha^{t''-t'-\tau}\\
    &\leq L\alpha^{t''-t'} \sum_{\tau=0}^{t''-t'}  {t''-t' \choose \tau} (L\Delta/\alpha)^{\tau} \\
    &\leq L\alpha^{t''-t'} (1+L\Delta/\alpha)^{t''-t'}\\
    &\leq L\alpha^{(t''-t')/2},
  \end{align*}
  where the second inequality follows from the binomial expansion and the submultiplicativity of $\|\cdot\|$; the third inequality follows from the assumption that $\|\prod_{t=t'+1}^{t''}\Phi_{t}\|\leq L\alpha^{t''-t'}$ and $\|\Phi_{t}-\Phi'_{t}\|\leq \Delta$; the fourth inequality is obtained by rearrangement; the fifth inequality follows from the binomial theorem; and the last inequality follows from the definition of $\Delta$.
  \qedhere
\end{proof}

\begin{proof}{Proof of Proposition \ref{prop:just}}

  For Proposition \ref{prop:just}\ref{prop:just-stab}, we apply Lemma \ref{lem:help-3} and have $\|\prod_{t=t'+1}^{t''} \Phi_{t}(\bxi_{0:t})\|\leq L\alpha^{(t''-t')/2}$ a.s. Thus, we know $\{\Phi_t(\bxi_{0:t})\}_{t\in\cT_{1:T}}$ is $(L,\alpha^{1/2})$-stable. For Proposition \ref{prop:just}\ref{prop:just-stabil}, we let $K$ be the $(L,\alpha)$-stabilizing feedback for $(A,B)$. Using the facts that (i) $A(\xi_t)-B(\xi_t)K = (A-BK) + ((A(\xi_t)-A) + (B(\xi_t)-B)K)$, (ii) $(L,\alpha)$-stability of $A-BK$, and (iii) $\|(A(\xi_t)-A) + (B(\xi_t)-B)K\|\leq \Delta$ a.s., we~apply~Proposition~\ref{prop:just}\ref{prop:just-stab} and obtain the result. Proposition \ref{prop:just}\ref{prop:just-detect} can be proved similarly.
  \qedhere
\end{proof}

\subsection{Proof of Theorem \ref{thm:decay}}\label{apx:decay}

We  prove Theorem \ref{thm:decay} by using the equivalence between \eqref{eqn:spc} and \eqref{eqn:spcd}. The following lemma formally establishes such equivalence.

\begin{lemma}\label{lem:equiv}
  Under Assumptions \ref{ass:fin} and \ref{ass:main} and given $\overline{w}_{\tau-1}\in\mathbb{R}^{n_x}\times\mathbb{R}^{n_u}$, $\overline{\xi}_0\in\Xi_0$, $\tau\in\cT$, and $W\geq 0$, there exists a unique solution of \eqref{eqn:spc} for any $\overline{\bxi}_{0:\tau}\in\bXi_{0:\tau}(\xi_0)$. Furthermore, for $k\in\cV$ such that $\underline{\bxi}_{0\rightarrow k} = \overline{\bxi}_{0:\tau}$ (such $k$ exists due to Proposition \ref{prop:scen}\ref{prop:scen-a}), we have
  \begin{equation}\label{eqn:equiv}
    w^{(\tau,W)}_{t(j)}(\underline\bxi_{0\rightarrow j};\overline{w}_{\tau-1})=\underline w^{(k,W)}_j(\overline{w}_{\tau-1}), \quad\quad\forall j\in \cV^{(k)}_{\tau:\tau+W}.
  \end{equation}
\end{lemma}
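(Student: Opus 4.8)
The plan is to prove the lemma by exhibiting a one-to-one, objective-preserving correspondence between the feasible decision processes of the functional problem \eqref{eqn:spc} and the nodal assignments of the extensive problem \eqref{eqn:spcd}, and then lifting the existence and uniqueness already guaranteed for \eqref{eqn:spcd} by Theorem \ref{thm:decay-ext-sub}. Throughout I would fix the node $k\in\cV$ with $\underline{\bxi}_{0\rightarrow k}=\overline{\bxi}_{0:\tau}$ (so that $t(k)=\tau$); such a $k$ is unique by the tree structure and exists by Proposition \ref{prop:scen}\ref{prop:scen-a}.

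First I would set up the identification of the two sets of decision variables. By Proposition \ref{prop:scen}\ref{prop:scen-a}, for each $t\in\cT_{\tau:\tau+W}$ the map $j\mapsto\underline{\bxi}_{0\rightarrow j}$ is a bijection from the descendant set $\cV^{(k)}_t$ onto the conditional support $\bXi_{0:t}(\overline{\bxi}_{0:\tau})$. Hence every feasible decision process $\{w_t(\cdot)\}_{t\in\cT_{\tau:\tau+W}}$ of \eqref{eqn:spc} corresponds to a unique nodal assignment $\underline{w}_j\coloneqq w_{t(j)}(\underline{\bxi}_{0\rightarrow j})$ for $j\in\cV^{(k)}_{\tau:\tau+W}$, and conversely. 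Under this identification the dynamic constraint \eqref{eqn:spc-con} evaluated at $\overline{\bxi}_{0:t}=\underline{\bxi}_{0\rightarrow j}$ reads $\underline{x}_j=f(\underline{w}_{a(j)};\underline{\xi}_j)=\underline{f}_j(\underline{w}_{a(j)})$, since $\overline{\bxi}_{0:t-1}=\underline{\bxi}_{0\rightarrow a(j)}$; multiplying by the positive factor $\pi_{j\mid k}$ recovers the corresponding scaled constraint in \eqref{eqn:spcd}. Likewise the initial condition \eqref{eqn:spc-initial} becomes $\underline{x}_k=f(\overline{w}_{\tau-1};\underline{\xi}_k)=\underline{f}_k(\overline{w}_{\tau-1})$, matching \eqref{eqn:spcd-con-1}. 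Thus feasibility is preserved in both directions.

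Next I would verify that the objectives agree. Decomposing the conditional expectation stage by stage and applying the law of total expectation over the disjoint events $\{\bxi_{0:t}=\underline{\bxi}_{0\rightarrow j}\}_{j\in\cV^{(k)}_t}$, which partition the conditioning event by Proposition \ref{prop:scen}\ref{prop:scen-a}, together with the conditional probabilities $\mathbb{P}[\bxi_{0:t}=\underline{\bxi}_{0\rightarrow j}\mid\bxi_{0:\tau}=\overline{\bxi}_{0:\tau}]=\pi_{j\mid k}$ from Proposition \ref{prop:scen}\ref{prop:scen-c}, I obtain
\begin{equation*}
  \mathbb{E}_{\bxi}\left[\sum_{t\in\cT_{\tau:\tau+W}}\ell(w_t(\bxi_{0:t});\xi_t)\;\middle|\;\bxi_{0:\tau}=\overline{\bxi}_{0:\tau}\right]
  =\sum_{j\in\cV^{(k)}_{\tau:\tau+W}}\pi_{j\mid k}\,\underline{\ell}_j(\underline{w}_j),
\end{equation*}
where I also use $\underline{\ell}_j(\underline{w}_j)=\ell(w_{t(j)}(\underline{\bxi}_{0\rightarrow j});\underline{\xi}_j)$ from the definition of the nodal data in \eqref{eqn:modeld}. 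The right-hand side is precisely the objective of \eqref{eqn:spcd}, so the correspondence preserves objective values.

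Having established the equivalence, the existence and uniqueness of the solution of \eqref{eqn:spc} follow from those of the primal solution $\underline{\bw}^{(k,W)}$ of \eqref{eqn:spcd} in Theorem \ref{thm:decay-ext-sub}; this holds for arbitrary $\overline{w}_{\tau-1}$ because absorbing the initial condition into the stage-$\tau$ data (as noted before Theorem \ref{thm:decay-ext}) leaves the uniformly regular, hence invertible, KKT matrix unchanged. The bijection then maps the unique minimizer of \eqref{eqn:spcd} to the unique minimizer of \eqref{eqn:spc} via $\underline{w}_j^{(k,W)}=w^{(\tau,W)}_{t(j)}(\underline{\bxi}_{0\rightarrow j};\overline{w}_{\tau-1})$, which is exactly \eqref{eqn:equiv}. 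I expect the main obstacle to be the careful bookkeeping in the objective identity: one must confirm that summing the per-node costs over $\cV^{(k)}_t$ with weights $\pi_{j\mid k}$ reproduces the conditional expectation exactly, which hinges on the partition property and the conditional-probability characterization of Proposition \ref{prop:scen}. The constraint equivalence and the transfer of existence and uniqueness from the extensive form are comparatively routine once the bijection is in place.
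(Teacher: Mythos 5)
Your proposal is correct and takes essentially the same approach as the paper: both identify decision processes of \eqref{eqn:spc} with nodal assignments of \eqref{eqn:spcd} via Proposition \ref{prop:scen}, match objectives (conditional expectation equals the $\pi_{j\mid k}$-weighted nodal sum) and constraints (after scaling by $\pi_{j\mid k}$), and import existence/uniqueness from Theorem \ref{thm:decay-ext-sub}. The only difference is presentational—you phrase it as a direct objective- and feasibility-preserving bijection, while the paper runs a contradiction argument against the uniqueness of the extensive solution—which is the same mathematical content.
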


\begin{proof}{Proof.}

  We only have to show that $\{w_t(\cdot)\}_{t\in\cT_{\tau:\tau+W}}$ with $w_t:\bXi_{0:t}(\overline{\bxi}_{0:\tau})\rightarrow \mathbb{R}^{n_x}\times \mathbb{R}^{n_u}$ satisfying
  \begin{equation}\label{eqn:construct}
    w_{t(j)}(\underline{\bxi}_{0\rightarrow j}) = w^{(k,W)}_{j}(\overline{w}_{\tau-1}),\quad \forall j\in\cV^{(k)}_{\tau:\tau+W},
  \end{equation}
  is a unique solution of \eqref{eqn:spc}. We note that the definition \eqref{eqn:construct} covers the entire domain of $\{w_t(\cdot)\}_{t\in\cT_{\tau:\tau+W}}$ (cf. Proposition \ref{prop:scen}\ref{prop:scen-a}). Suppose there exists $\{w'_t(\cdot)\}_{t\in\cT_{\tau:\tau+W}}\neq \{w_t(\cdot)\}_{t\in\cT_{\tau:\tau+W}}$ that satisfies the constraints of \eqref{eqn:spc} and does not have a worse objective value than $\{w_t(\cdot)\}_{t\in\cT_{\tau:\tau+W}}$. By Proposition \ref{prop:scen}, we can express the expectation in \eqref{eqn:spc} as an explicit summation:
  \begin{align*}
    \mathbb{E}_{\bxi}\left[\sum_{t\in\cT_{\tau:\tau+W}}\ell(w'_t(\bxi_{0:t});\xi_t)\;\middle|\; \bxi_{0:\tau}= \overline{\bxi}_{0:\tau}\right]
    &= \sum_{j\in\cV^{(k)}_{t(k):t(k)+W}}\pi_{j|k}{\ell}(w'_{t(j)}(\underline\bxi_{0\rightarrow j}); \underline{\xi}_j)\\
    & \stackrel{\mathclap{\eqref{eqn:modeld}}}{=} \sum_{j\in\cV^{(k)}_{t(k):t(k)+W}}\pi_{j|k}\underline{\ell}_j(w'_{t(j)}(\underline\bxi_{0\rightarrow j})).
  \end{align*}
  Applying Proposition \ref{prop:scen}\ref{prop:scen-a}, the constraints of \eqref{eqn:spc} can be rewritten as
  \begin{align*}
    & x'_\tau({\underline\bxi}_{0\rightarrow k}) = f (\overline{w}_{\tau-1};\overline\xi_{\tau})\\
    & x'_t({\underline\bxi}_{0\rightarrow j}) = f(w'_{t-1}({\underline\bxi}_{0\rightarrow j}); {\underline\xi}_{j}),\; \forall j\in\cV^{(k)}_{t(k)+1:t(k)+W}. 
  \end{align*}
  Applying \eqref{eqn:modeld} and multiplying $\pi_{j|k}$ on both sides, we further have
  \begin{align*}
    & x'_\tau({\underline\bxi}_{0\rightarrow k}) = \underline f_k (\overline{w}_{\tau-1})\\
    & \pi_{j|k} x'_t({\underline\bxi}_{0\rightarrow j}) = \pi_{j|k}\underline f_j (w'_{t-1}({\underline\bxi}_{0\rightarrow j})),\; \forall j\in\cV^{(k)}_{t(k)+1:t(k)+W}.
  \end{align*}
  Thus, we know that $\{w'(\bxi_{0\rightarrow j})\}_{j\in\cV^{(k)}_{\tau:\tau+W}}$ is also feasible for Problem \eqref{eqn:spcd} and does not have a worse objective than $\{\underline w^{(k,W)}_i(\overline{w}_{\tau-1})\}_{i\in\cV^{(k)}_{\tau:\tau+W}}$. This contradicts Theorem \ref{thm:decay-ext-sub} that $\{\underline w^{(k,W)}_i(\overline{w}_{\tau-1})\}_{i\in\cV^{(k)}_{\tau:\tau+W}}$ is a unique solution of \eqref{eqn:spcd}. Thus, we prove the existence of the unique solution of \eqref{eqn:spc}.
  \qedhere
\end{proof}

We are now ready to prove Theorem \ref{thm:decay}

\begin{proof}{Proof of Theorem \ref{thm:decay}.}

  For now, we consider a special case of $\overline{w}_{\tau-1}=\bzero$. We choose $k$ so that $\overline{\bxi}_{0:\tau}=\underline\bxi_{0\rightarrow k}$ (such $k$ exists due to Proposition \ref{prop:scen}\ref{prop:scen-a}). By Lemma \ref{lem:equiv} and Theorem \ref{thm:decay-ext-sub}, there exist unique solutions of \eqref{eqn:spc} and \eqref{eqn:spcd}, namely, $\{w^{(\tau,W)}_t (\cdot;\overline w_{\tau-1})\}_{t\in\cT_{\tau:\tau+W}}$ and $\{\underline w^{(k,W)}_i (\overline{w}_{\tau-1}) \}_{i\in\cV^{(k)}_{\tau:\tau+W}}$. By Lemma \ref{lem:equiv} and Proposition \ref{prop:ev}, we have
  \begin{equation}\label{eqn:decay-equiv:1}
    \|\underline\bw^{(k,W)}_{\cV^{(k)}_{t}}(\overline{w}_{\tau-1})\|_{\bpi}  = \pi_k^{1/2}\left\{\mathbb{E}_{\bxi}\left[\|w^{(\tau,W)}_{t}(\bxi_{0:t};\overline{w}_{\tau-1})\|^2\;\middle|\; \bxi_{0:\tau}=\underline\bxi_{0\rightarrow k} \right]\right\}^{1/2}.
  \end{equation}
  Further, by the definition of $\underline\bp$ and Proposition \ref{prop:ev}, we have
  \begin{equation}\label{eqn:decay-equiv:2}
    \| \underline\bp_{\cV^{(k)}_{t'}} \|_{\bpi} = \pi_k^{1/2}\left\{\mathbb{E}_{\bxi}\left[\|p(\xi_{t'})\|^2 \;\middle|\; \bxi_{0:\tau}=\underline\bxi_{0\rightarrow k}\right]\right\}^{1/2}.
  \end{equation}
  From Theorem \ref{thm:decay-ext-sub}, we have
  \begin{equation}\label{eqn:decay-0}
    \underline\bw^{(k,W)}_{\cV^{(k)}_{t}}  =  \sum_{t'\in\cT_{\tau:\tau+W}} \underline\bPsi^{(k,W)}_{\cV^{(k)}_{t}, \cV^{(k)}_{t'}}  \underline\bp_{\cV^{(k)}_{t'}},\quad \forall t\in\cT_{\tau:\tau+W}.
  \end{equation}
  Combining \eqref{eqn:decay-equiv:1}, \eqref{eqn:decay-equiv:2} and \eqref{eqn:decay-0}, dividing both sides by $\pi_k^{1/2}$ (nonzero due to Proposition \ref{prop:scen}\ref{prop:scen-b}), and applying Theorem \ref{thm:decay-ext-sub}, we have for all $t\in\cT_{\tau:\tau+W}$ that
  \begin{align*}
    \left\{\mathbb{E}_{\bxi}\left[\|w^{(\tau,W)}_{t}(\bxi_{0:t};\overline{w}_{\tau-1})\|^2  \mid \bxi_{0:\tau}=\overline{\bxi}_{0:\tau}\right]\right\}^{1/2}
    &\leq   \sum_{t'\in\cT_{\tau:\tau+W}} \left\|\underline\bPsi^{(k,W)}_{\cV^{(k)}_{t}, \cV^{(k)}_{t'}}\right\|_{\bpi} \left\{\mathbb{E}_{\bxi}\left[\|p(\xi_{t'})\|^2 \mid \bxi_{0:\tau}=\overline{\bxi}_{0:\tau}\right]\right\}^{1/2}\\
    &\leq   \sum_{t'\in\cT_{\tau:\tau+W}} c_1 \rho^{|t-t'|} \left\{\mathbb{E}_{\bxi}\left[\|p(\xi_{t'})\|^2 \mid \bxi_{0:\tau}=\overline{\bxi}_{0:\tau}\right]\right\}^{1/2}.
  \end{align*}
  Setting $d(\overline{\xi}_{\tau}) \leftarrow d(\overline{\xi}_{\tau}) + A(\overline{\xi}_{\tau}) \overline{x}_{\tau-1}+ B(\overline{\xi}_{\tau}) \overline{u}_{\tau-1}$ and using Assumption \ref{ass:main}\ref{ass:main-bdd}, we obtain the result for $\overline{w}_{\tau-1}\neq\bzero$. This completes the proof.
  \qedhere
\end{proof}

\subsection{Proof of Theorem \ref{thm:stab}}\label{apx:stab}

Recall the definition of $\{\underline w^{\red{(\textrm{cl},W)}}_i(\overline{w}_{-1})\}_{i\in\cV}$ from \eqref{eqn:spcd-pol} and $\underline{\bPsi}^{(k,W)}$ from Theorem \ref{thm:decay-ext-sub}. Further, we have the following formula from Theorem \ref{thm:decay-ext-sub}:
\begin{equation}\label{eqn:rec}
  \underline w^{\red{(\textrm{cl},W)}}_k(\overline{w}_{-1})
  = \underline S^{(W)}_{k,a(k)} \underline w^{\red{(\textrm{cl},W)}}_{a(k)} (\overline{w}_{-1})+ \sum_{t'\in\cT_{t(k):t(k)+W}}\underline \bPsi^{(k,W)}_{k,\cV^{(k)}_{t'}} \underline\bp_{\cV^{(k)}_{t'}},\quad \forall k\in\cV,
\end{equation}
where $\underline\bS^{(W)}\coloneqq\{\underline S^{(W)}_{ij}\}_{i,j\in\cV}$ and  $\underline\bLambda\coloneqq\{\underline\Lambda_{ij}\}_{i,j\in\cV}$ have the form
\begin{equation}\label{eqn:Lambda}
  \underline \Lambda_{ij}\coloneqq
  \begin{cases}
    \begin{bmatrix}
      \bzero & \bzero \\
      \bzero &\bzero  \\
      \underline A_{i} &\underline B_{i}  \\
    \end{bmatrix}&\text{if }i\in c(j), \\
    \bzero&\text{otherwise}, 
  \end{cases}\quad \quad\quad 
  \underline S^{(W)}_{ij}\coloneqq
  \begin{cases}
    \underline \Psi^{(i,W)}_{ii} \underline \Lambda_{ij}&\text{if }i\in c(j), \\
    \bzero& \text{otherwise},
  \end{cases}
\end{equation}
and we also have $\underline S_{0,a(0)}\coloneqq\underline \Psi^{(0,W)}_{0,0} \underline \Lambda_{0,a(0)}$, and $\underline \Lambda_{0,a(0)} = \begin{bmatrix}\bzero&\bzero\\\bzero&\bzero\\\underline A_{0}&\underline B_{0}\\\end{bmatrix}$.

The recursion in \eqref{eqn:rec} suggests that $\underline w^{\red{(\textrm{cl},W)}}_k(\overline{w}_{-1})$ can be expressed in terms of $\underline \bp_{\cV^{(k)}_{t(k):t(k)+W}}$ and the previous augmented state $\underline w^{\red{(\textrm{cl},W)}}_{a(k)}(\overline{w}_{-1})$. This means that for each $t$, $\underline{\bw}^{\red{(\textrm{cl},W)}}_{\cV_t}(\overline{w}_{-1})$ can be expressed in terms of $\underline \bp_{\cV_{t:t+W}}$ and $\underline{\bw}^{\red{(\textrm{cl},W)}}_{\cV_{t-1}}(\overline{w}_{-1})$. By concatenating \eqref{eqn:rec}, we obtain
\begin{equation}\label{eqn:rec-full}
  \underline\bw^{\red{(\textrm{cl},W)}}_{\cV_t} (\overline{w}_{-1}) = \underline\bS^{(W)}_{\cV_t,\cV_{t-1}} \underline\bw^{\red{(\textrm{cl},W)}}_{\cV_{t-1}} (\overline{w}_{-1})+ \sum_{t'\in\cT_{t:t+W}}\underline\bPsi^{(\cV_t,W)}_{\cV_t,\cV_{t'}} \underline\bp_{\cV_{t'}},\quad t\in\cT,
\end{equation}
where $\underline\bPsi^{(\cV_{t},W)}\coloneqq\{\underline\Psi^{(\cV_{t},W)}_{ij}\}_{i,j\in\cV_{t:t+W}}$ has the form
\begin{equation}\label{eqn:Psi}
  \underline \Psi^{(\cV_{t},W)}_{ij} \coloneqq
  \begin{cases}
    \underline \Psi^{(k,W)}_{ij}&\text{if }\exists k\in\cV_{t} \text{ s.t. }i,j\in\cV^{(k)}_{t:t+W}, \\
    \bzero&\text{otherwise}, \\
  \end{cases}
\end{equation}
$\underline \bS^{(W)}_{\cV_0,\cV_{-1}}\coloneqq \underline S^{(W)}_{0,a(0)}$, and $\underline\bw^{\red{(\textrm{cl},W)}}_{\cV_{-1}}(\overline{w}_{-1})\coloneqq \overline{w}_{-1}$.

Based on \eqref{eqn:rec-full}, we derive an explicit expression of $\underline\bw^{\red{(\textrm{cl},W)}}_{\cV_t}(\overline{w}_{-1})$ in terms of $\underline\bp_{\cV_0},\cdots,\underline\bp_{\cV_T}$.

\begin{lemma}\label{lem:help-1}
  Under Assumptions \ref{ass:fin} and \ref{ass:main} and given $\overline{w}_{-1}=\bzero$, $\overline{\xi}_0\in\Xi_0$, and $W\geq 0$, we have
  \begin{equation}\label{eqn:help-1-0} 
    \underline\bw^{\red{(\textrm{cl},W)}}_{\cV_t}(\overline{w}_{-1}) =  \sum_{t'\in\cT_{0:t+W}} \sum_{t''\in\cT_{(t'-W):(t\wedge t')}} \left(\prod_{t'''=t''+1}^t \underline\bS^{(W)}_{\cV_{t'''},\cV_{t'''-1}} \right)\underline\bPsi^{(\cV_{t''},W)}_{\cV_{t''},\cV_{t'}} \underline\bp_{\cV_{t'}},\quad t\in\cT.
  \end{equation}
\end{lemma}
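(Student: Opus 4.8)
The plan is to solve the linear recursion \eqref{eqn:rec-full} explicitly by unrolling it from the zero initial condition, and then to interchange the order of summation to reach the claimed form. Recursion \eqref{eqn:rec-full} has the structure of a time-varying linear system $\underline\bw^{(W)}_{\cV_t} = \underline\bS^{(W)}_{\cV_t,\cV_{t-1}}\underline\bw^{(W)}_{\cV_{t-1}} + \underline\bg_t$, driven from the initial condition $\underline\bw^{(W)}_{\cV_{-1}}=\overline{w}_{-1}=\bzero$, where the forcing term is $\underline\bg_t \coloneqq \sum_{t'\in\cT_{t:t+W}}\underline\bPsi^{(\cV_t,W)}_{\cV_t,\cV_{t'}}\underline\bp_{\cV_{t'}}$.

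First I would establish, by induction on $t\in\cT$, the variation-of-constants formula
\begin{equation*}
  \underline\bw^{(W)}_{\cV_t} = \sum_{t''\in\cT_{0:t}} \left(\prod_{t'''=t''+1}^{t}\underline\bS^{(W)}_{\cV_{t'''},\cV_{t'''-1}}\right)\underline\bg_{t''},
\end{equation*}
with the usual convention that the empty product (occurring when $t''=t$) equals the identity. The base case $t=0$ is immediate: since $\underline\bw^{(W)}_{\cV_{-1}}=\bzero$ the transition term in \eqref{eqn:rec-full} drops out, leaving $\underline\bw^{(W)}_{\cV_0}=\underline\bg_0$. For the inductive step, I would substitute the hypothesis for $\underline\bw^{(W)}_{\cV_{t-1}}$ into \eqref{eqn:rec-full}, apply $\underline\bS^{(W)}_{\cV_t,\cV_{t-1}}$ to each summand and absorb this factor into the corresponding product (thereby extending each product range from $t-1$ to $t$), and finally append the new $t''=t$ term $\underline\bg_t$.

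The second and final step is to expand $\underline\bg_{t''}$ and swap the order of summation. Substituting the definition of $\underline\bg_{t''}$ turns the expression into a double sum over the index pair $(t'',t')$ subject to $t''\in\cT_{0:t}$ and $t'\in\cT_{t'':t''+W}$, i.e. $0\le t''\le t$ together with $t''\le t'\le t''+W$. I would then interchange the two summations, taking $t'$ as the outer index. For a fixed $t'$ the joint constraints rearrange to $\max(0,t'-W)\le t''\le \min(t,t')$, which is exactly the range $t''\in\cT_{(t'-W):(t\wedge t')}$; and the outer index runs over $t'\in\cT_{0:t+W}$, since $t'$ can be as small as $0$ (attained at $t''=0$) and as large as $t+W$ (attained at $t''=t$). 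This reproduces \eqref{eqn:help-1-0} verbatim.

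No step here is genuinely deep; the only delicate point is the bookkeeping in the summation swap---correctly translating the joint constraints $0\le t''\le t$ and $t''\le t'\le t''+W$ into the stated index ranges, and verifying that the empty-product/identity convention is applied consistently at the boundary cases $t''=t$ and $t'=t''$. Everything else is the routine unrolling of a linear recursion.
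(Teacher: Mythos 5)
Your proposal is correct and takes essentially the same route as the paper: both arguments amount to unrolling the linear recursion \eqref{eqn:rec-full} from the zero initial condition by induction on $t$, with the index bookkeeping you describe (the range $\cT_{(t'-W):(t\wedge t')}$ arising exactly from intersecting $0\le t''\le t$ with $t'-W\le t''\le t'$). The only difference is organizational: you first prove the single-sum variation-of-constants formula and then interchange the order of summation, whereas the paper runs the induction directly on the double-sum identity \eqref{eqn:help-1-0}, absorbing the summation swap into the inductive step; the two are mathematically equivalent, and your factorization is if anything slightly cleaner.
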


Since Lemma \ref{lem:help-1} is complex in notation, we briefly discuss the intuition behind Lemma \ref{lem:help-1} to facilitate~the reading. The formula in \eqref{eqn:rec-full} allows for recursively eliminating the effect of the previous augmented state. Whenever the previous augmented state is eliminated, $\underline\bS^{(W)}_{\cV_{t'''},\cV_{t'''-1}}$ is multiplied,  and thus we see the production of $\underline\bS^{(W)}_{\cV_{t'''},\cV_{t'''-1}}$ over $t'''=t''+1,\cdots,t$ in \eqref{eqn:help-1-0}. Furthermore, the summation over $\cT_{(t'-W):(t\wedge t')}$ appears because the new effect of $\underline\bp$ is introduced whenever the previous augmented state is eliminated.

\begin{proof}{Proof.}
  We prove \eqref{eqn:help-1-0} by induction. First, one can see that \eqref{eqn:help-1-0} for $t=0$ holds directly from \eqref{eqn:rec-full}. Assuming that the claim holds for $0,\cdots,t$,
  we aim to prove the claim for $t+1$. From \eqref{eqn:rec-full} and \eqref{eqn:help-1-0} for $t$, we have
  \begin{align*}
    \underline\bw^{\red{(\textrm{cl},W)}}_{\cV_{t+1}}(\overline{w}_{-1})
    &=  \underline\bS^{(W)}_{\cV_{t+1},\cV_t}\sum_{t'\in\cT_{0:t+W}} \sum_{t''\in\cT_{(t'-W):(t\wedge t')}} \left(\prod_{t'''=t''+1}^t \underline\bS^{(W)}_{\cV_{t'''},\cV_{t'''-1}} \right)\underline\bPsi^{(\cV_{t''},W)}_{\cV_{t''},\cV_{t'}} \underline\bp_{\cV_{t'}} \\
    &\quad + \sum_{t'\in\cT_{t+1:t+W+1}}\underline\bPsi^{(\cV_{t+1},W)}_{\cV_{t+1},\cV_{t'}} \underline\bp_{\cV_{t'}}\\
    &=  \sum_{t'\in\cT_{0:t}} \sum_{t''\in\cT_{(t'-W):(t\wedge t')}} \left(\prod_{t'''=t''+1}^{t+1} \underline\bS^{(W)}_{\cV_{t'''},\cV_{t'''-1}} \right)\underline\bPsi^{(\cV_{t''},W)}_{\cV_{t''},\cV_{t'}} \underline\bp_{\cV_{t'}} + \underline\bPsi^{(\cV_{t+1},W)}_{\cV_{t+1},\cV_{t+W+1}} \underline\bp_{\cV_{t+W+1}}\\
    &\quad+  \sum_{t'\in\cT_{t+1:t+W}} \sum_{t''\in\cT_{(t'-W):(t\wedge t')}} \left(\prod_{t'''=t''+1}^{t+1} \underline\bS^{(W)}_{\cV_{t'''},\cV_{t'''-1}} \right)\underline\bPsi^{(\cV_{t''},W)}_{\cV_{t''},\cV_{t'}} \underline\bp_{\cV_{t'}} +\\
    &\quad+ \sum_{t'\in\cT_{t+1:t+W}}\underline\bPsi^{(\cV_{t+1},W)}_{\cV_{t+1},\cV_{t'}} \underline\bp_{\cV_{t'}},\\
    & =  \sum_{t'\in\cT_{0:t}} \sum_{t''\in\cT_{(t'-W):(t+1\wedge t')}} \left(\prod_{t'''=t''+1}^{t+1} \underline\bS^{(W)}_{\cV_{t'''},\cV_{t'''-1}} \right)\underline\bPsi^{(\cV_{t''},W)}_{\cV_{t''},\cV_{t'}} \underline\bp_{\cV_{t'}} + \underline\bPsi^{(\cV_{t+1},W)}_{\cV_{t+1},\cV_{t+W+1}} \underline\bp_{\cV_{t+W+1}}\\
    &\quad+ \sum_{t'\in\cT_{t+1:t+W}}\sum_{t''\in\cT_{(t'-W):(t+1\wedge t')}} \left(\prod_{t'''=t''+1}^{t+1} \underline\bS^{(W)}_{\cV_{t'''},\cV_{t'''-1}} \right)\underline\bPsi^{(\cV_{t''},W)}_{\cV_{t''},\cV_{t'}} \underline\bp_{\cV_{t'}},\\
    &=\sum_{t'\in\cT_{0:t+W+1}} \sum_{t''\in\cT_{(t'-W):(t+1\wedge t')}} \left(\prod_{t'''=t''+1}^{t+1} \underline\bS^{(W)}_{\cV_{t'''},\cV_{t'''-1}} \right)\underline\bPsi^{(\cV_{t''},W)}_{\cV_{t''},\cV_{t'}} \underline\bp_{\cV_{t'}}.
  \end{align*}
  Here, the second equality is obtained by splitting the summations; note that $\underline\bPsi^{(\cV_{t+1},W)}_{\cV_{t+1},\cV_{t+W+1}} \underline\bp_{\cV_{t+W+1}}$ term for $t+W+1 > T$ can be treated as zeros; the third equality is obtained by observing that $t+1\wedge t'=t+1$ for $t'\geq t+1$ and $t+1\wedge t'=t'$ for $t'\leq t$, $\prod_{t'''=t+2}^{t+1} \underline\bS^{(W)}_{\cV_{t'''},\cV_{t'''-1}}=\bI$, and by merging the third and fourth term; the last equality can be obtained by merging the summations. Thus, by induction, \eqref{eqn:help-1-0} is proved.
  \qedhere
\end{proof}

From Lemma \ref{lem:help-1}, we see that the boundedness of $\underline\bw^{(k)}_{\cV_t}$ can be obtained by showing $\|\prod_{t'''=t''+1}^t \underline\bS^{(W)}_{\cV_{t'''},\cV_{t'''-1}}\|_{\bpi} $ decays exponentially in $t-t''$. We do so by showing two results: (i)  $\|\prod_{t'''=t''+1}^t \underline\bS^{(\red{T})}_{\cV_{t'''},\cV_{t'''-1}}\|_{\bpi} $ exponentially decays, and (ii)
  $\underline\bS^{(W)}_{\cV_{t'''},\cV_{t'''-1}}-\underline\bS^{(\red{T})}_{\cV_{t'''},\cV_{t'''-1}}$ is exponentially small in $W$.
\red{
Here, we use $W=\infty$ to denote the case where the horizon fully covers the rest of the horizon. It is actually not an infinite horizon because, based on our definition, $\cT_{t:\infty} = \cT_{t:T}$~for~any~$t\in\cT$.
}
  
  Next, by applying Lemma \ref{lem:help-3}, we obtain the desired result. Here, we note that Lemma \ref{lem:help-3} holds even if $\|\cdot\|$ is replaced by $\|\cdot\|_{\bpi}$ because $\|\cdot\|_{\bpi}$ is submultiplicative (cf. Proposition \ref{prop:norm}\ref{prop:norm-c}).
We prove the first step in the following two lemmas. 

\begin{lemma}\label{lem:bellman}
  Under Assumptions \ref{ass:fin} and \ref{ass:main} and given $\overline{\xi}_0\in\Xi_0$, $\overline{w}_{a(i)}\in\mathbb{R}^{n_x}\times\mathbb{R}^{n_u}$, $i\in\cV$, and its strict descendant $j\in\cV$, we have
  \begin{equation}\label{eqn:bel}
    \underline\bw^{(i,\red{T})}_{\cV^{(j)}}(\overline{w}_{a(i)}) = \underline\bw^{(j,\red{T})}(\underline{w}_{a(j)}^{(i,\red{T})}(\overline{w}_{a(i)})).
  \end{equation}
\end{lemma}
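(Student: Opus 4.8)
The plan is to prove \eqref{eqn:bel} by invoking the principle of optimality for the tree-structured program \eqref{eqn:spcd}. The key structural facts are that the objective $\sum_{m\in\cV^{(i)}}\pi_{m|i}\,\underline\ell_m(\underline w_m)$ is separable across nodes and that the dynamic constraints couple each node only to its parent; consequently, the subtree $\cV^{(j)}$ interacts with the remainder of $\cV^{(i)}$ solely through the value $\underline w_{a(j)}$ entering the constraint $\underline x_j = \underline f_j(\underline w_{a(j)})$. Note that, since $j$ is a strict descendant of $i$, the parent $a(j)$ lies on the path from $i$ to $j$, so $a(j)\in\cV^{(i)}\setminus\cV^{(j)}$ and the value $\underline w^{(i,\infty)}_{a(j)}(\overline w_{a(i)})$ is well defined.

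First I would split the objective of the problem rooted at $i$ (with $W=\infty$, so it spans $\cV^{(i)}$) into the contribution from $\cV^{(i)}\setminus\cV^{(j)}$ and the contribution from $\cV^{(j)}$. For the latter, since $i$ is an ancestor of $j$ and $j$ is an ancestor of every $m\in\cV^{(j)}$, Proposition \ref{prop:scen}\ref{prop:scen-c} gives $\pi_{m|i}=\pi_{j|i}\,\pi_{m|j}$, so that $\sum_{m\in\cV^{(j)}}\pi_{m|i}\underline\ell_m(\underline w_m)=\pi_{j|i}\sum_{m\in\cV^{(j)}}\pi_{m|j}\underline\ell_m(\underline w_m)$, with $\pi_{j|i}>0$ by Proposition \ref{prop:scen}\ref{prop:scen-b}. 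The summand on the right is exactly the objective of \eqref{eqn:spcd} rooted at $j$, so minimization of the $i$-scaled tail is equivalent to minimization of the $j$-scaled objective.

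Next I would carry out the swap argument. Fixing $\underline w_{a(j)}=\underline w^{(i,\infty)}_{a(j)}(\overline w_{a(i)})$, I claim the restriction $\underline\bw^{(i,\infty)}_{\cV^{(j)}}$ minimizes $\sum_{m\in\cV^{(j)}}\pi_{m|j}\underline\ell_m(\underline w_m)$ subject to $\underline x_j=\underline f_j(\underline w^{(i,\infty)}_{a(j)})$ and the internal constraints $\pi_{m|j}\underline x_m=\pi_{m|j}\underline f_m(\underline w_{a(m)})$ for $m\in\cV^{(j)}\setminus\{j\}$ --- which is precisely problem \eqref{eqn:spcd} rooted at $j$ with initial condition $\underline w^{(i,\infty)}_{a(j)}$. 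Indeed, if some feasible $\{\underline w'_m\}_{m\in\cV^{(j)}}$ achieved a strictly smaller value, then replacing $\underline\bw^{(i,\infty)}$ on $\cV^{(j)}$ by $\{\underline w'_m\}$ and leaving it unchanged elsewhere would yield a point still feasible for the problem rooted at $i$: feasibility outside $\cV^{(j)}$ is preserved because no constraint there depends on nodes inside $\cV^{(j)}$ (the coupling runs only from parent to child), the constraint at $j$ holds by construction, and the constraint at $a(j)$ is untouched. This modified point would strictly lower the full objective, contradicting the optimality of $\underline\bw^{(i,\infty)}$.

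Finally, since $\underline\bw^{(i,\infty)}_{\cV^{(j)}}$ and $\underline\bw^{(j,\infty)}(\underline w^{(i,\infty)}_{a(j)})$ both solve the same instance of \eqref{eqn:spcd} rooted at $j$, whose solution is unique by Theorem \ref{thm:decay-ext-sub}, they must coincide, establishing \eqref{eqn:bel}. The main obstacle is the swap step: one must verify carefully that the coupling between $\cV^{(j)}$ and its complement runs only one way, so that modifying the subtree solution preserves feasibility of the complement, and that the strictly positive factor $\pi_{j|i}$ makes the two objectives equivalent for minimization. Once these are in place, uniqueness delivers the claimed equality at once.
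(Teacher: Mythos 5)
Your proposal is correct and follows essentially the same route as the paper's proof: a cut-and-paste (exchange) argument showing that a strictly better feasible point for the subtree problem rooted at $j$ could be spliced into $\underline\bw^{(i,\infty)}(\overline{w}_{a(i)})$ to contradict its optimality, followed by an appeal to the uniqueness of solutions from Theorem \ref{thm:decay-ext-sub}. The only difference is that you spell out two details the paper dismisses with ``we can easily see'': the factorization $\pi_{m|i}=\pi_{j|i}\,\pi_{m|j}$ with $\pi_{j|i}>0$, which makes the $i$-weighted tail objective a positive multiple of the $j$-rooted objective, and the one-way parent-to-child coupling that preserves feasibility of the complement after the swap.
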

Recall that $\underline\bw^{(i,\red{T})}(\overline{w}_{a(i)})$ solves Problem \eqref{eqn:spcd} that roots from $i$, and the left hand side of \eqref{eqn:bel} denotes the part of $\underline\bw^{(i,\red{T})}(\overline{w}_{a(i)})$ associated with $\cV^{(j)}$, which is the subtree rooting from $j$. The right-hand side denotes the solution that solves Problem \eqref{eqn:spcd} rooting from $j$.

\begin{proof}{Proof.}
  We prove this by contradiction. Suppose the result does not hold; that is, $\underline\bw^{(i,\red{T})}_{\cV^{(j)}}(\overline{w}_{a(i)})$ is not a solution of Problem \eqref{eqn:spcd} with $k=j$ and $\overline{w}_{a(k)}=\underline{w}_{a(j)}^{(i,\red{T})}(\overline{w}_{a(i)})$. By Theorem~\ref{thm:decay-ext-sub}, we~know that there exists a feasible point $\underline\bw'_{\cV^{(j)}}\coloneqq\{\underline w'_\ell\}_{\ell\in\cV^{(j)}}\neq \underline\bw^{(i,\red{T})}_{\cV^{(j)}}(\overline{w}_{a(i)})$~that has a smaller objective value for Problem \eqref{eqn:spcd} with $k=j$ and $\overline{w}_{a(k)}=\underline{w}_{a(j)}^{(i,\red{T})}(\overline{w}_{a(i)})$ than $\underline\bw^{(i,\red{T})}_{\cV^{(j)}}(\overline{w}_{a(i)})$. Then, we can easily see~that $\underline\bw''_{\cV^{(j)}}\coloneqq\{\underline w''_\ell\}_{\ell\in\cV^{(i)}}$ with $\underline w''_\ell\coloneqq
  \begin{cases}
    \underline w'_\ell&\text{if }\ell\in\cV^{(j)}\\
    \underline w^{(i,\red{T})}_\ell(\overline{w}_{a(i)})&\text{otherwise}
  \end{cases}$ is feasible and has a smaller objective value for Problem \eqref{eqn:spcd} with $k=i$ than $\underline\bw^{(i,\red{T})}(\overline{w}_{a(i)})$. This contradicts the fact that $\underline\bw^{(i,\red{T})}(\overline{w}_{a(i)})$ is the unique solution (cf. Theorem \ref{thm:decay-ext-sub}). Thus, we complete the proof.
  \qedhere
\end{proof}

\begin{lemma}\label{lem:help-2}

  Under Assumptions \ref{ass:fin} and \ref{ass:main} and given $\overline{\xi}_0\in\Xi_0$ and $t''\in\cT$, we have
  \begin{equation*}
    \left\|\prod_{t'''=t''+1}^t \underline\bS^{(\red{T})}_{\cV_{t'''},\cV_{t'''-1}}\right\|_{\bpi}\leq \dfrac{2c_1L}{\rho} \rho^{t-t''},\quad \forall t\in\cT_{t'':T}.
  \end{equation*}
\end{lemma}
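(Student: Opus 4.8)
The plan is to bound the product by \emph{factoring} it into a single open-loop sensitivity block, which decays exponentially by Theorem \ref{thm:decay-ext-sub}, multiplied by a coupling matrix that is merely bounded. The naive route of bounding each factor $\underline\bS^{(\infty)}_{\cV_{t'''},\cV_{t'''-1}}$ separately and invoking submultiplicativity fails, since each factor has $\|\cdot\|_{\bpi}$-norm of order $c_1L$, so their product would grow like $(c_1L)^{t-t''}$ rather than decay. The exponential rate must therefore be extracted from the open-loop perturbation bound, not from the individual one-step transitions.

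First I would establish the identity, valid for $t\geq t''+1$,
\[
  \prod_{t'''=t''+1}^t \underline\bS^{(\infty)}_{\cV_{t'''},\cV_{t'''-1}} = \underline\bPsi^{(\cV_{t''+1},\infty)}_{\cV_t,\cV_{t''+1}}\,\underline\bLambda_{\cV_{t''+1},\cV_{t''}},
\]
where $\underline\bPsi^{(\cV_{t''+1},\infty)}$ is the consolidated open-loop sensitivity of \eqref{eqn:Psi} rooted at stage $t''+1$, and $\underline\bLambda_{\cV_{t''+1},\cV_{t''}}$ is the coupling block of \eqref{eqn:Lambda}. Conceptually, the product is the Jacobian of the map sending the augmented state at stage $t''$ to the augmented state at stage $t$ along the optimal ($W=\infty$) closed-loop trajectory; by Lemma \ref{lem:bellman} this trajectory coincides, within the subtree rooted at each child $i_1\in\cV_{t''+1}$, with the open-loop solution of the subproblem rooted at $i_1$ initialized by the state at $k=a(i_1)$. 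Since the initial state of that subproblem enters its data only through the additive term $\underline\Lambda_{i_1,k}$ at node $i_1$ (recall the reduction of the initial condition to a perturbation of $\underline d$ stated before Theorem \ref{thm:decay-ext}), the sensitivity of the state at a descendant $j'\in\cV^{(i_1)}_t$ with respect to the state at $k$ equals $\underline\Psi^{(i_1,\infty)}_{j',i_1}\underline\Lambda_{i_1,k}$, and reading off the $(j',k)$-block and summing over children reproduces the right-hand side. Rigorously I would prove the identity by induction on $t$, the inductive step resting on the propagation relation $\underline\Psi^{(i_1,\infty)}_{j',i_1}=\underline S^{(\infty)}_{j',a(j')}\underline\Psi^{(i_1,\infty)}_{a(j'),i_1}$, which follows by differentiating the recursion \eqref{eqn:rec} (specialized via Lemma \ref{lem:bellman} to the open-loop solution in the subtree) with respect to the root data $\underline p_{i_1}$, whose perturbation leaves the in-subtree disturbance terms untouched.

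Given the factorization, I would bound the two factors under the $\|\cdot\|_{\bpi}$ norm and combine them by submultiplicativity (Proposition \ref{prop:norm}\ref{prop:norm-c}). For the first factor, the columns of $\underline\bPsi^{(\cV_{t''+1},\infty)}_{\cV_t,\cV_{t''+1}}$ indexed by distinct $i_1\in\cV_{t''+1}$ have disjoint row supports $\cV^{(i_1)}_t$, so after applying Proposition \ref{prop:norm}\ref{prop:norm-a} the rescaled matrix is block-diagonal and its norm is the maximum of the block norms; Theorem \ref{thm:decay-ext-sub} then gives $\|\underline\bPsi^{(\cV_{t''+1},\infty)}_{\cV_t,\cV_{t''+1}}\|_{\bpi}\leq \max_{i_1}\|\underline\bPsi^{(i_1,\infty)}_{\cV^{(i_1)}_t,\{i_1\}}\|_{\bpi}\leq c_1\rho^{\,t-t''-1}$. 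For the second factor, $\underline\bLambda_{\cV_{t''+1},\cV_{t''}}$ is block-subdiagonal, so mimicking the column-norm argument of Lemma \ref{lem:basic}\ref{lem:basic-bdd} and using $\|\underline\Lambda_{i_1,k}\|=\|[\underline A_{i_1}\ \underline B_{i_1}]\|\leq 2L$ together with $\sum_{i_1\in c(k)}\pi_{i_1\mid k}=1$ (Proposition \ref{prop:scen}\ref{prop:scen-b}) yields $\|\underline\bLambda_{\cV_{t''+1},\cV_{t''}}\|_{\bpi}\leq 2L$. Multiplying the two bounds gives $c_1\rho^{\,t-t''-1}\cdot 2L=\tfrac{2c_1L}{\rho}\rho^{\,t-t''}$, the claimed estimate.

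The main obstacle is the factorization step: recognizing that a telescoping product of non-decaying one-step transitions collapses into a single open-loop sensitivity block is what unlocks the exponential rate, and making this precise requires the dynamic-programming consistency of Lemma \ref{lem:bellman} together with the propagation identity. The remaining normalization subtleties---that the operator norm $\|\cdot\|_{\bpi}$ on within-subtree blocks depends only on the ratios $\pi_{i\mid j}$ and is therefore insensitive to whether one normalizes by $\pi_k$---are routine. Finally, the degenerate case $t=t''$ (empty product equal to the identity) is handled directly, since $\|\bI\|_{\bpi}=1\leq 2c_1L/\rho$.
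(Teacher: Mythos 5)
Your proposal is correct and takes essentially the same route as the paper: both collapse the product into the factorization $\underline\bPsi^{(\cV_{t''+1},\infty)}_{\cV_{t},\cV_{t''+1}}\underline\bLambda_{\cV_{t''+1},\cV_{t''}}$ using the dynamic-programming consistency of Lemma \ref{lem:bellman}, then bound the first factor by $c_1\rho^{t-t''-1}$ via Theorem \ref{thm:decay-ext-sub} and block-diagonality, bound the second by $2L$, and multiply. The only cosmetic difference is how the factorization identity is verified—the paper compares the closed-loop ($W=\infty$) and open-loop representations of the trajectory generated with $\underline\bp_{\cV_{t''+1:T}}=\bzero$, while you propose an induction on $t$ via a propagation relation—but both rest on the same underlying argument.
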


\begin{proof}{Proof.}
  Let $\underline\bp_{\cV_{t''+1:T}}=\bzero$ and consider $\underline{\bw}_{\cV_{t}}$ obtained by recursively applying \eqref{eqn:rec-full} starting from $t=t''+1$ with $W=\infty$ and given $\overline{\bw}_{\cV_{t''}}\coloneqq\{\overline{w}_i\}_{i\in\cV_{t''}}$. Then, we have
  \begin{equation}\label{eqn:help-2-2}
    \underline{\bw}_{\cV_t} =  \prod_{t'''=t''+1}^t \underline\bS^{(\red{T})}_{\cV_{t'''},\cV_{t'''-1}} \overline{\bw}_{\cV_t''},\quad\forall  t\in\cT_{t'':T}.
  \end{equation}
  By Lemma \ref{lem:bellman}, we know \eqref{eqn:rec-full} with $W=\infty$ follows the exact open-loop policy, and thus we have
  \begin{equation}\label{eqn:help-2-3}
    \underline{\bw}_{\cV_{t}} = \bPsi^{(\cV_{t''+1},\red{T})}_{\cV_{t},\cV_{t''+1}}\underline\bLambda_{\cV_{t''+1},\cV_{t''}}\overline{\bw}_{\cV_{t''}},\quad\forall t\in\cT_{t'':T}.
  \end{equation}
  By the equivalence between \eqref{eqn:help-2-2} and \eqref{eqn:help-2-3} for all $\overline\bw_{\cV''_t}$ and the injectivity of the mappings, we obtain
  \begin{equation*}
    \prod_{t'''=t''+1}^t \underline\bS^{(\red{T})}_{\cV_{t'''},\cV_{t'''-1}}= \underline\bPsi^{(\cV_{t''+1},\red{T})}_{\cV_{t},\cV_{t''+1}} \underline\bLambda_{\cV_{t''+1},\cV_{t''}},\quad\forall  t\in\cT_{t'':T}.
  \end{equation*}
  By Theorem \ref{thm:decay-ext-sub}, we have $\|\underline\bPsi^{(k,\red{T})}_{\cV^{(k)}_{t},k}\|_{\bpi}\leq c_1\rho^{t-t''-1}$ for any $k\in\cV_{t''+1}$. Noting that $\underline\bPsi^{(\cV_{t''+1},\red{T})}_{\cV_{t},\cV_{t''+1}}$ has a block diagonal structure (cf. \eqref{eqn:Psi}), we have
  \begin{equation*}
    \|\underline\bPsi^{(\cV_{t''+1},\red{T})}_{\cV_{t},\cV_{t''+1}}\|_{\bpi} \leq \max_{k\in\cV_{t''+1}}\|\underline\bPsi^{(k,\red{T})}_{\cV^{(k)}_{t},k}\|_{\bpi}\leq c_1\rho^{t-t''-1}.
  \end{equation*}
  Furthermore, using the block diagonal structure of $\underline\bLambda_{\cV_{t''+1},\cV_{t''}}$, we have the following for $t''\in\cT_{0:T-1}$:
  \begin{equation}\label{eqn:lambound}
    \|\underline\bLambda_{\cV_{t''+1},\cV_{t''}}\|_{\bpi} \leq \left(\max_{j\in\cV_{t''}}\sum_{i\in c(j)}\pi_{i|j}\|\underline\Lambda_{ij}\|^2\right)^{1/2} \stackrel{\eqref{eqn:Lambda}}{=} \left(\max_{j\in\cV_{t''}}\sum_{i\in c(j)} \pi_{i|j} \left\|\begin{bmatrix} & \\&\\\underline A_i&\underline B_i  \end{bmatrix} \right\|^2\right)^{1/2}\leq 2L,
  \end{equation}
  where the first inequality follows from the property of induced 2-norm, and the last inequality follows from Assumption \ref{ass:main}\ref{ass:main-bdd} and Proposition \ref{prop:scen}\ref{prop:scen-b}. Finally, combining the above three displays completes the proof.
  \qedhere
\end{proof}

The second step is proved by the following lemma.

\begin{lemma}\label{lem:trunc}
  Under Assumptions \ref{ass:fin} and \ref{ass:main} and given $\overline{\xi}_0\in\Xi_0$ and $W\geq 0$, we have
  \begin{equation*}
    \|\underline\bPsi^{(\cV_t,W)}_{\cV_t,\cV_{t'}} - \underline\bPsi^{(\cV_t,\red{T})}_{\cV_t,\cV_{t'}} \|_{\bpi} \leq  2c_1^2L  \rho^{2W-t'+t},\quad \|\underline\bS^{(\cV_t,W)}_{\cV_t,\cV_{t-1}} - \underline\bS^{(\cV_t,\red{T})}_{\cV_t,\cV_{t-1}} \|_{\bpi} \leq  4c_1^2L^2  \rho^{2W},\quad \forall t\in\cT,\; t'\in\cT_{t:T}.
  \end{equation*}
\end{lemma}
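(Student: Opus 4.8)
The plan is to prove the first (the $\underline\bPsi$) inequality and then deduce the second from it. For the first bound I would first reduce to a single subtree: by \eqref{eqn:Psi} both $\underline\bPsi^{(\cV_t,W)}$ and $\underline\bPsi^{(\cV_t,\infty)}$ are block diagonal over the roots $k\in\cV_t$, with entries vanishing whenever $i,j$ lie in different subtrees. Hence their difference is block diagonal over $k$ with blocks $\underline\bPsi^{(k,W)}_{\cV^{(k)}_t,\cV^{(k)}_{t'}}-\underline\bPsi^{(k,\infty)}_{\cV^{(k)}_t,\cV^{(k)}_{t'}}$, and since by Proposition \ref{prop:norm}\ref{prop:norm-a} the $\bpi$-norm equals the ordinary norm of the $(\pi_{i\mid j})^{1/2}$-scaled matrix, a consistently indexed block-diagonal matrix has $\bpi$-norm equal to the maximum of its block norms. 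It therefore suffices to bound a single $\|\underline\bPsi^{(k,W)}_{\cV^{(k)}_t,\cV^{(k)}_{t'}}-\underline\bPsi^{(k,\infty)}_{\cV^{(k)}_t,\cV^{(k)}_{t'}}\|_{\bpi}$ for a fixed root $k\in\cV_t$, whose truncation boundary is stage $t+W$.

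The key structural observation is that the finite-horizon optimality matrix $\tbH^{(k,W)}$ of \eqref{eqn:spcd} is exactly the principal submatrix of the full matrix $\tbH^{(k,\infty)}$ indexed by the near nodes $\mathcal N\coloneqq\cV^{(k)}_{t:t+W}$: the stage costs and one-step dynamics blocks are identical, and truncating at stage $t+W$ merely deletes the coupling to the descendants $\mathcal F\coloneqq\cV^{(k)}_{t+W+1:T}$. Writing $\tbH^{(k,\infty)}=\left[\begin{smallmatrix}\tbH^{(k,W)}&E\\ E^\top&G\end{smallmatrix}\right]$, the coupling block $E$ is supported only on the boundary pair $\cV^{(k)}_{t+W}\times\cV^{(k)}_{t+W+1}$ and carries only the $-\tbA^\top,-\tbB^\top$ entries, so $\|E\|_{\bpi}\le 2L$ by Lemma \ref{lem:basic}\ref{lem:basic-bdd}. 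Comparing the two optimality systems on $\mathcal N$ (from $\tbH^{(k,\infty)}\underline\bz=\underline\bp$ one reads $\tbH^{(k,W)}\underline\bz_{\mathcal N}=\underline\bp_{\mathcal N}-E\,\underline\bz_{\mathcal F}$, while $\tbH^{(k,W)}\underline\bz^{(k,W)}=\underline\bp_{\mathcal N}$) yields, at the level of the solution operators, the identity
\begin{equation*}
  \underline\bOmega^{(k,\infty)}_{\mathcal N,\mathcal N}-\underline\bOmega^{(k,W)}=-\,\underline\bOmega^{(k,W)}\,E\,\underline\bOmega^{(k,\infty)}_{\mathcal F,\mathcal N},
\end{equation*}
which one verifies from $(\tbH^{(k,W)})^{-1}=\underline\bOmega^{(k,W)}$ and the Schur-complement form of $\underline\bOmega^{(k,\infty)}_{\mathcal N,\mathcal N}$. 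Restricting to primal rows and columns replaces $\underline\bOmega$ by $\underline\bPsi$ on the two ends, and because $E$ lives only on the boundary the $(\cV^{(k)}_t,\cV^{(k)}_{t'})$ block collapses to a single chain through stages $t+W$ and $t+W+1$.

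Taking $\bpi$-norms, using submultiplicativity (Proposition \ref{prop:norm}\ref{prop:norm-c}) and the decay estimate of Theorem \ref{thm:decay-ext-sub}, for $t'\le t+W$ I would obtain
\begin{equation*}
  \bigl\|\underline\bPsi^{(k,\infty)}_{\cV^{(k)}_t,\cV^{(k)}_{t'}}-\underline\bPsi^{(k,W)}_{\cV^{(k)}_t,\cV^{(k)}_{t'}}\bigr\|_{\bpi}\le \|\underline\bOmega^{(k,W)}_{\cV^{(k)}_t,\cV^{(k)}_{t+W}}\|_{\bpi}\,\|E\|_{\bpi}\,\|\underline\bOmega^{(k,\infty)}_{\cV^{(k)}_{t+W+1},\cV^{(k)}_{t'}}\|_{\bpi}\le 2c_1^2L\,\rho^{2W-t'+t+1},
\end{equation*}
and since $\rho<1$ the extra boundary hop $\rho^{1}$ absorbs the constant, giving $2c_1^2L\,\rho^{2W-t'+t}$. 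For the remaining range $t'>t+W$ the finite-horizon block is zero, so the difference is $-\underline\bPsi^{(k,\infty)}_{\cV^{(k)}_t,\cV^{(k)}_{t'}}$ of $\bpi$-norm at most $c_1\rho^{t'-t}\le c_1\rho^{2W-t'+t}\le 2c_1^2L\rho^{2W-t'+t}$, using $t'-t\ge W$ and $c_1,L\ge 1$. This establishes the first inequality. The second then follows directly: by \eqref{eqn:Lambda} one has $\underline S^{(W)}_{ij}=\underline\Psi^{(i,W)}_{ii}\underline\Lambda_{ij}$, so the blocks of $\underline\bS^{(\cV_t,W)}_{\cV_t,\cV_{t-1}}-\underline\bS^{(\cV_t,\infty)}_{\cV_t,\cV_{t-1}}$ are $(\underline\Psi^{(i,W)}_{ii}-\underline\Psi^{(i,\infty)}_{ii})\underline\Lambda_{ij}$; applying the first inequality at $t'=t$ (the diagonal block) gives $\|\underline\Psi^{(i,W)}_{ii}-\underline\Psi^{(i,\infty)}_{ii}\|_{\bpi}\le 2c_1^2L\rho^{2W}$, and combining this with $\|\underline\bLambda_{\cV_t,\cV_{t-1}}\|_{\bpi}\le 2L$ from \eqref{eqn:lambound} via Proposition \ref{prop:norm}\ref{prop:norm-c} yields the claimed $4c_1^2L^2\rho^{2W}$.

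The main obstacle is the second paragraph. One must argue cleanly that $\tbH^{(k,W)}$ is \emph{exactly} the principal submatrix of $\tbH^{(k,\infty)}$, so that the sole discrepancy is the localized boundary coupling $E$, and then extract the operator identity in the form that exposes precisely two decay factors $c_1\rho^{\,\cdot}$ flanking a single bounded boundary block. Getting this localization right is what produces the sharp exponent $\rho^{2W-t'+t}$ (the sum of the reach from $t$ to the boundary and from the boundary back to $t'$) rather than a weaker $\rho^{W}$, and it is essential that the identity be written so that the far-side inverse appears as a genuine block of $\underline\bOmega^{(k,\infty)}$ (to which Theorem \ref{thm:decay-ext-sub} applies) rather than as an isolated $G^{-1}$ factor, which would inflate the constant by an unnecessary $c_1L$.
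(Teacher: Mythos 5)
Your proposal is correct and takes essentially the same approach as the paper: the identical key identity expressing $\tbOmega^{(k,W)}-\tbOmega^{(k,\infty)}_{\cV^{(k)}_{t(k):t(k)+W},\,\cV^{(k)}_{t(k):t(k)+W}}$ as $\tbOmega^{(k,W)}$ times the single boundary coupling block of $\tbH$ (bounded by $2L$) times a far block of $\tbOmega^{(k,\infty)}$, followed by the decay estimates of Theorem \ref{thm:decay-ext-sub}, the block-diagonal reduction over $k\in\cV_t$, and the factorization through $\underline\bLambda_{\cV_t,\cV_{t-1}}$ for the second bound. Your explicit treatment of the range $t'>t+W$ (where the truncated solution map vanishes and the decay of $\underline\bPsi^{(\cV_t,\infty)}$ alone suffices) is a minor completeness addition that the paper leaves implicit; otherwise the two arguments coincide.
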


\begin{proof}{Proof.}
  Recall from Theorem \ref{thm:decay-ext-sub} that $\underline\tbOmega^{(k,W)}= \tbH_{\cV^{(k)}_{t(k):t(k)+W},\cV^{(k)}_{t(k):t(k)+W}}^{-1}$ and $\tOmega^{(k,W)}_{ij}=(\pi_{i\mid j})^{1/2}\underline\Omega^{(k,W)}_{ij}$. By the definition, $\tbH_{\cV^{(k)},\cV^{(k)}} \tbOmega^{(k,\red{T})} = \bI$. Extracting the rows and columns of $\cV^{(k)}_{t(k):t(k)+W}$, we can see
  \begin{equation*}
    \tbH_{\cV^{(k)}_{t(k):t(k)+W},\cV^{(k)}_{t(k):t(k)+W}} \tbOmega_{\cV^{(k)}_{t(k):t(k)+W},\cV^{(k)}_{t(k):t(k)+W}}^{(k,\red{T})} +  \tbH_{\cV^{(k)}_{t(k):t(k)+W},\cV^{(k)}_{t(k)+W+1:T}}( \tbOmega_{\cV^{(k)}_{t(k):t(k)+W},\cV^{(k)}_{t(k)+W+1:T}}^{(k,\red{T})})^\top = \bI.
  \end{equation*}
  We multiply $\tbOmega^{(k,W)}$  from the left on both sides, rearrange terms, and obtain
  \begin{equation*}
    \tbOmega^{(k,W)} - \tbOmega^{(k,\red{T})}_{\cV^{(k)}_{t(k):t(k)+W},\cV^{(k)}_{t(k):t(k)+W}} = \tbOmega^{(k,W)}\tbH_{\cV^{(k)}_{t(k):t(k)+W},\cV^{(k)}_{t(k)+W+1:T}} (\tbOmega^{(k,\red{T})}_{\cV^{(k)}_{t(k):t(k)+W},\cV^{(k)}_{t(k)+W+1:T}})^\top.
  \end{equation*}
  Extracting the rows for $k$ and the columns for $\cV^{(k)}_{t'}$, we further obtain
  \begin{equation}\label{eqn:trunc-1}
    \tbOmega^{(k,W)}_{k,\cV^{(k)}_{t'}} - \tbOmega^{(k,\red{T})}_{k,\cV^{(k)}_{t'}} = \tbOmega^{(k,W)}_{k,\cV^{(k)}_{t(k):t(k)+W}}\tbH_{\cV^{(k)}_{t(k):t(k)+W},\cV^{(k)}_{t(k)+W+1:T}} (\tbOmega^{(k,\red{T})}_{\cV^{(k)}_{\cV_{t'}},\cV^{(k)}_{t(k)+W+1:T}})^\top.
  \end{equation}
  We note that the blocks of $\tbH_{\cV^{(k)}_{t(k):t(k)+W},\cV^{(k)}_{t(k)+W+1:T}} $ are zero except for $\tbH_{\cV^{(k)}_{t(k)+W},\cV^{(k)}_{t(k)+W+1}}$. From \eqref{eqn:trunc-1},~we~have
  \begin{equation*}
    \left\|\tbOmega^{(k,W)}_{k,\cV^{(k)}_{t'}} - \tbOmega^{(k,\red{T})}_{k,\cV^{(k)}_{t'}} \right \|\leq \left\|\tbOmega^{(k,W)}_{k,\cV^{(k)}_{t(k)+W}} \right\| \left\| \tbH_{\cV^{(k)}_{t(k)+W},\cV^{(k)}_{t(k)+W+1}} \right\| \left\|\tbOmega^{(k,\red{T})}_{\cV^{(k)}_{\cV_{t'}},\cV^{(k)}_{t(k)+W+1}} \right\|,\quad \forall t'\in\cT_{t(k):t(k)+W}.
  \end{equation*}
  Applying Theorem \ref{thm:decay-ext-sub}, we have
  \begin{equation*}
    \left\| \tbOmega^{(k,W)}_{k,\cV^{(k)}_{t(k)+W}} \right\| \leq c_1 \rho^{W},\quad\quad\quad  \left\|\tbOmega^{(k,\red{T})}_{\cV^{(k)}_{t'},\cV^{(k)}_{t(k)+W+1}}\right\|\leq c_1 \rho^{W-t'+t(k)}.
  \end{equation*}
  Furthermore,
  \begin{equation*}
    \|\tbH_{\cV^{(k)}_{t(k)+W},\cV^{(k)}_{t(k)+W+1}}\|\leq \left(\max_{i\in\cV^{(k)}_{t(k)+W}}\sum_{j\in c(i)}\|\tH_{ij}\|^2\right)^{1/2} \stackrel{\eqref{eqn:Hij}}{=} \left(\max_{i\in\cV^{(k)}_{t(k)+W}}\sum_{j\in c(i)} \pi_{j|i} \left\|\begin{bmatrix} &&A_j^\top \\&&B_j^\top\\\;  \end{bmatrix} \right\|^2\right)^{1/2}\leq 2L,
  \end{equation*}
  where the first inequality follows from the property of induced 2-norm, and the last inequality follows from Assumption \ref{ass:main}\ref{ass:main-bdd}. Combining the above three displays, we obtain
  \begin{equation*}
    \|\tbOmega^{(k,W)}_{k,\cV^{(k)}_{t'}} - \tbOmega^{(k,\red{T})}_{k,\cV^{(k)}_{t'}}\| \leq 2c_1^2L  \rho^{2W-t'+t(k)}.
  \end{equation*}
  Since $\tbPsi^{(k,W)}$ is a submatrix of $\tbOmega^{(k,W)}$, we also have
  \begin{equation*}
    \|\tbPsi^{(k,W)}_{k,\cV^{(k)}_{t'}} - \tbPsi^{(k,\red{T})}_{k,\cV^{(k)}_{t'}}\| \leq 2c_1^2L  \rho^{2W-t'+t(k)}.
  \end{equation*}
  By the block diagonal structure of $\underline\bPsi^{(\cV_{t},W)}_{\cV_{t},\cV_{t'}}$ and $\underline\bPsi^{(\cV_{t},\red{T})}_{\cV_{t},\cV_{t'}}$, we obtain
  \begin{equation}\label{eqn:trunc-4}
    \|\underline\bPsi^{(\cV_t,W)}_{\cV_t,\cV_{t'}} - \underline\bPsi^{(\cV_t,\red{T})}_{\cV_t,\cV_{t'}} \|_{\bpi}
    \leq \max_{k\in\cV_{t}} \|\underline\bPsi^{(k,W)}_{k,\cV^{(k)}_{t'}} - \underline\bPsi^{(k,\red{T})}_{k,\cV^{(k)}_{t'}} \|_{\bpi} \leq \max_{k\in\cV_{t}} \|\tbPsi^{(k,W)}_{k,\cV^{(k)}_{t'}} - \tbPsi^{(k,\red{T})}_{k,\cV^{(k)}_{t'}} \|\
    \leq 2c_1^2L  \rho^{2W-t'+t},
  \end{equation}
  where the second inequality follows from $\widetilde{\bPsi}^{(k,W)}=\{\pi_{i|j}\underline\Psi^{(k,W)}_{ij}\}_{i,j\in\cV^{(k)}_{t(k):t(k)+W}}$ (recall the definition of the scaled problem in \eqref{eqn:spcd-scaled}) and Proposition \ref{prop:norm}\ref{prop:norm-a}.
  Finally, noting the fact that
  \begin{equation*}
    \underline\bS^{(W)}_{\cV_{t},\cV_{t-1}} - \underline\bS^{(\red{T})}_{\cV_{t},\cV_{t-1}}= (\underline\bPsi^{(\cV_{t},W)}_{\cV_{t},\cV_t} - \underline\bPsi^{(\cV_{t},\red{T})}_{\cV_{t},\cV_t} )\underline\bLambda_{\cV_t,\cV_{t-1}},
  \end{equation*}
  we obtain
  \begin{equation*}
    \|\underline\bS^{(W)}_{\cV_{t},\cV_{t-1}} - \underline\bS^{(\red{T})}_{\cV_{t},\cV_{t-1}}\|_{\bpi} \leq \left\|\underline\bPsi^{(\cV_{t},W)}_{\cV_{t},\cV_t} - \underline\bPsi^{(\cV_{t},\red{T})}_{\cV_{t},\cV_t} \right\|_{\bpi} \|\underline\bLambda_{\cV_t,\cV_{t-1}}\|_{\bpi} \leq 4c_1^2L^2  \rho^{2W},
  \end{equation*}
  where the inequalities follow from Proposition \ref{prop:norm}\ref{prop:norm-c}, \eqref{eqn:lambound}, and \eqref{eqn:trunc-4}.
  \qedhere
\end{proof}

By Lemmas \ref{lem:help-2} and \ref{lem:trunc}, we have shown that (i) $\|\prod_{t'''=t''+1}^t \underline\bS^{(\red{T})}_{\cV_{t'''},\cV_{t'''-1}}\|_{\bpi} $ decays exponentially in $t-t''$, and (ii) $\|\underline\bS^{(W)}_{\cV_{t'''},\cV_{t'''-1}}-\underline\bS^{(\red{T})}_{\cV_{t'''},\cV_{t'''-1}}\|_{\bpi}$ can be made arbitrarily small. Thus, we can show $\|\prod_{t'''=t''+1}^t \underline\bS^{(W)}_{\cV_{t'''},\cV_{t'''-1}}\|_{\bpi} $ decays exponentially in $t-t''$. Based on this result, we can derive the bound for $\underline\bw^{\red{(\textrm{cl},W)}}_{\cV_t}(\overline{w}_{-1})$. 

\begin{proof}{Proof of Theorem \ref{thm:stab}.}

  Since $W\geq \overline{W}$, we apply Lemma \ref{lem:trunc} and can verify that 
  \begin{equation*}
    \|\underline\bS^{(W)}_{\cV_t,\cV_{t-1}}-\underline\bS^{(\red{T})}_{\cV_t,\cV_{t-1}}\|_{\bpi}\leq (\alpha^{1/2}-\alpha)/L.
  \end{equation*}
  From Lemmas \ref{lem:help-3} and \ref{lem:help-2}, we have that
  \begin{equation}\label{eqn:stab-0}
    \left\|\prod_{t'''=t''+1}^t \underline\bS^{(W)}_{\cV_{t'''},\cV_{t'''-1}}\right\|_{\bpi} \leq (2c_1L/\rho) \rho^{(t-t'')/2}.
  \end{equation}
  Assuming $\overline w_{-1}=\bzero$ for now, by Lemma \ref{lem:help-1}, there exists $\underline\bPsi^{(W)}\coloneqq\{\underline\Psi^{(W)}_{ij}\}_{i,j\in\cV}$ such that
  \begin{align*}
    \underline\bw^{\red{(\textrm{cl},W)}}(\overline{w}_{-1})=\underline\bPsi^{(W)}\underline\bp,
  \end{align*}
  and again by Lemma \ref{lem:help-1},
  \begin{align}\label{eqn:stab}
    \|\underline\bPsi^{(W)}_{\cV_t,\cV_{t'}}\|_{\bpi} &\leq  \sum_{t''\in\cT_{(t'-W):(t\wedge t')}} \left\|\prod_{t'''=t''+1}^t \underline\bS^{(W)}_{\cV_{t'''},\cV_{t'''-1}} \right\|_{\bpi}\left\|\underline\bPsi^{(\cV_{t''},W)}_{\cV_{t''},\cV_{t'}} \right\|_{\bpi} \nonumber\\
                                                      &\leq \sum_{t''=0}^{t\wedge t'}  (2c_1L/\rho)\rho^{(t-t'')/2} c_1\rho^{t'-t''} \nonumber\\
                                                      &\leq   (2c_1^2L/\rho) \rho^{|t-t'|/2}\sum_{t''=0}^{t\wedge t'}\rho^{t/2-3t''/2 + t' - |t-t'|/2} \nonumber\\
                                                      &\leq \frac{2c_1^2L\rho^{|t-t'|/2} }{\rho(1-\rho^{3/2})}.
  \end{align}
  Here, the first inequality follows from Proposition \ref{prop:norm}\ref{prop:norm-c}; the second inequality follows from \eqref{eqn:stab-0} and Theorem \ref{thm:decay-ext-sub}; the third inequality follows from rearranging the terms; and the last inequality follows from that $t/2-3(t\wedge t')/2+t'-|t-t'|/2 \geq 0$.
  By the definitions in \eqref{eqn:spc-pol} and \eqref{eqn:spcd-pol}, and by Lemma \ref{lem:equiv} and Proposition \ref{prop:ev}, we have
  \begin{subequations}\label{eqn:stab-equiv}
    \begin{align}
      \|\underline\bw^{\red{(\textrm{cl},W)}}_{\cV_{t}}(\overline{w}_{-1})\|_{\bpi} &= \pi_k^{1/2}\left\{\mathbb{E}_{\bxi}\left[\|w^{\red{(\textrm{cl},W)}}_{t}(\bxi_{0:t};\overline{w}_{-1})\|^2\;\middle|\; \xi_{0}=\overline\xi_{0} \right]\right\}^{1/2}, \\
      \| \underline\bp_{\cV_{t'}} \|_{\bpi}&= \pi_k^{1/2}\left\{\mathbb{E}_{\bxi}\left[\|p(\xi_{t'})\|^2 \;\middle|\; \xi_{0}=\overline\xi_{0}\right]\right\}^{1/2}.
    \end{align}
  \end{subequations}
  Applying \eqref{eqn:stab} and \eqref{eqn:stab-equiv} to $\underline\bw^{\red{(\textrm{cl},W)}}_{\cV_{t}}(\overline{w}_{-1})=\sum_{t'=0}^T\underline\bPsi^{(W)}_{\cV_{t},\cV_{t'}}\underline\bp_{\cV_{t'}}$, we obtain
  \begin{equation*}
    \left\{\mathbb{E}_{\bxi}\left[\left\|w^{\red{(\textrm{cl},W)}}_t(\bxi_{0:t};\overline{w}_{-1})\right\|^2\;\middle|\;\xi_0=\overline\xi_0\right]\right\}^{1/2} \leq \sum_{t'\in\cT} c_2 \rho^{|t-t'|/2}\left\{\mathbb{E}_{\bxi}\left[\| p(\xi_{t'})\|^2\;\middle|\;\xi_0=\overline\xi_0\right]\right\}^{1/2},\quad \forall t\in\cT.
  \end{equation*}
  By setting $d(\overline{\xi}_0)\leftarrow d(\overline{\xi}_0) + A(\overline{\xi}_0) \overline{x}_{-1}+ B(\overline{\xi}_0) \overline{u}_{-1}$ and applying Assumption \ref{ass:main}\ref{ass:main-bdd}, we obtain the result for $\overline w_{-1}\neq \bzero$. This completes the proof.
  \qedhere
\end{proof}

\subsection{Theorem \ref{thm:perf}}\label{apx:perf}

Let us define
\begin{equation}\label{eqn:newJ}
  \underline{J}^{(W)}_k(\overline{w}_{-1}) \coloneqq\sum_{i\in\cV^{(k)}} \pi_{i|k} \underline{\ell}_i(\underline{w}^{\red{(\textrm{cl},W)}}_i(\overline{w}_{-1})), \hskip1cm 
  \underline{J}^{(W)}_{\cV_t}(\overline{w}_{-1})\coloneqq\sum_{k\in\cV_t}\pi_{k}\underline{J}^{(W)}_k(\overline{w}_{-1}).
\end{equation}
We can observe from Proposition \ref{prop:scen}\ref{prop:scen-c} ($\pi_{i|k}$ is the conditional probability), Theorem \ref{thm:decay-ext-sub} (a unique solution of \eqref{eqn:spcd} exists), and Lemma \ref{lem:equiv} (the solution of \eqref{eqn:spcd} is the solution of \eqref{eqn:spc}) that $J^{(W)}_{\cV_0}(\overline{w}_{-1})$ is the expected performance of \red{SMPC} with prediction horizon length $W$, starting from $\overline{\xi}_0$ and $\overline{w}_{-1}$. Furthermore, we recall from \eqref{eqn:spcd} that
\begin{subequations}\label{eqn:newnewJ}
  \begin{equation}
    \underline{J}^{(k,W)}(\underline{w}_{a(k)})=\sum_{i\in\cV^{(k)}_{t(k):t(k)+W}} \pi_{i|k} \underline{\ell}_i(\underline{w}^{(k,W)}_i(\underline{w}_{a(k)})).
  \end{equation}
  Similarly to \eqref{eqn:newJ}, for $\underline{\bw}_{\cV_{t-1}}=\{\underline{w}_i\}_{i\in\cV_{t-1}}$, we define
  \begin{equation} \underline{J}^{(\cV_t,W)}(\underline{\bw}_{\cV_{t-1}}) \coloneqq\sum_{k\in\cV_t}\pi_{k}\underline{J}^{(k,W)}(\underline{w}_{a(k)}).
  \end{equation}
\end{subequations}
In the next lemma, we establish the exponential decay in the Hessian of $J^{(\cV_t,\red{T})}(\underline{\bw}_{\cV_{t-1}})$ (with respect to $\underline\bp$).

\begin{lemma}\label{lem:sig}
  Under Assumptions \ref{ass:fin} and \ref{ass:main} and given $\overline{\xi}_0\in\Xi_0$, there exists symmetric $\underline\bSigma^{(\cV_t,\red{T})}\coloneqq\{\underline\Sigma^{(\cV_t,\red{T})}_{ij}\}_{i,j\in\cV_{t:T}} $ such that $\underline J^{(\cV_t,\red{T})}(\bzero) = (1/2)(\underline\bp_{\cV_{t:T}})^\top \underline\bSigma^{(\cV_t,\red{T})}\underline\bp_{\cV_{t:T}}$, and the following holds
  \begin{equation*}
    \osigma_{\bpi}(\underline\bSigma^{(\cV_t,\red{T})}_{\cV_{t'},\cV_{t''}})\leq c_1^2L \rho^{|t'-t''|}(|t'-t''| + \dfrac{2}{1-\rho^2} ),\quad\quad  \forall t',t''\in\cT_{t:T}.
  \end{equation*}
\end{lemma}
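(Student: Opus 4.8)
The plan is to exploit that, once $\underline{\bw}_{\cV_{t-1}}=\bzero$, the value $\underline{J}^{(\cV_t,\infty)}(\bzero)$ is an explicit quadratic form in $\underline{\bp}_{\cV_{t:T}}$, and then to read off the exponential decay of its Hessian blocks from the decay of the solution map $\underline{\bPsi}^{(k,\infty)}$ supplied by Theorem \ref{thm:decay-ext-sub}. First I would write $\underline{J}^{(\cV_t,\infty)}(\bzero)=\sum_{k\in\cV_t}\pi_k\,\underline{J}^{(k,\infty)}(\bzero)=\sum_{i\in\cV_{t:T}}\pi_i\,\underline{\ell}_i(\underline{w}_i)$, using $\pi_k\pi_{i\mid k}=\pi_i$ (Proposition \ref{prop:scen}\ref{prop:scen-c}) and the fact that the subtrees $\{\cV^{(k)}\}_{k\in\cV_t}$ partition $\cV_{t:T}$; here $\underline{w}_i=\underline{w}^{(k,\infty)}_i(\bzero)$ is linear in $\underline{\bp}$ by Theorem \ref{thm:decay-ext-sub}, so the value is a quadratic form and this fixes a symmetric $\underline{\bSigma}^{(\cV_t,\infty)}$.

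To compute and bound it cleanly I would pass to the scaled variables of \eqref{eqn:spcd-scaled}. Writing $\tw_i=\pi_i^{1/2}\underline{w}_i$ and $\tp_i=\pi_i^{1/2}\underline{p}_i$, the probability weights cancel in the quadratic part, giving $\sum_i\pi_i\underline{\ell}_i=\tfrac12\tbw^\top\tbG\,\tbw-\tbg^\top\tbw$ with $\tbG=\diag(\tbQ,\tbR)$ block-diagonal in the stage index and $\tbg=\bE\tbp$, where $\bE$ is the coordinate projection extracting the $[\underline q;\underline r]$ part of $\underline p$. Substituting $\tbw=\tbPsi^{(\cV_t,\infty)}\tbp$ (the block-diagonal-over-subtrees scaled solution map, as in \eqref{eqn:Psi}) yields the symmetric scaled Hessian $\widetilde{\bSigma}\coloneqq(\tbPsi^{(\cV_t,\infty)})^\top\tbG\,\tbPsi^{(\cV_t,\infty)}-\bE^\top\tbPsi^{(\cV_t,\infty)}-(\tbPsi^{(\cV_t,\infty)})^\top\bE$, together with the one-to-one correspondence $\underline{\Sigma}_{ij}=(\pi_i\pi_j)^{1/2}\widetilde{\Sigma}_{ij}$. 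By Proposition \ref{prop:norm}\ref{prop:norm-b} this gives $\osigma_{\bpi}(\underline{\bSigma}^{(\cV_t,\infty)}_{\cV_{t'},\cV_{t''}})=\|\widetilde{\bSigma}_{\cV_{t'},\cV_{t''}}\|$, reducing everything to a standard $2$-norm bound on the scaled Hessian block.

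Next I would bound the three terms of $\widetilde{\bSigma}_{\cV_{t'},\cV_{t''}}$ separately (abbreviating $\tbPsi\equiv\tbPsi^{(\cV_t,\infty)}$). Since $\tbG$ is block-diagonal in the stage index, the quadratic term collapses to a single sum over an intermediate stage, $(\tbPsi^\top\tbG\,\tbPsi)_{\cV_{t'},\cV_{t''}}=\sum_{s\in\cT_{t:T}}(\tbPsi_{\cV_s,\cV_{t'}})^\top\tbG_{\cV_s}\tbPsi_{\cV_s,\cV_{t''}}$; submultiplicativity, $\|\tbG_{\cV_s}\|\le L$ (Assumption \ref{ass:main}\ref{ass:main-bdd}), and the decay $\|\tbPsi_{\cV_s,\cV_{t'}}\|=\|\underline{\bPsi}^{(\cV_t,\infty)}_{\cV_s,\cV_{t'}}\|_{\bpi}\le c_1\rho^{|s-t'|}$ (from Theorem \ref{thm:decay-ext-sub}, Proposition \ref{prop:norm}\ref{prop:norm-a}, and the block-diagonal-over-subtrees structure, whose stage-block norm is the maximum over $k\in\cV_t$) bound this by $c_1^2L\sum_{s}\rho^{|s-t'|+|s-t''|}$. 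Because $\bE$ is a projection, $\|\bE\|=1$, so each of the two linear terms has block norm at most $\|\tbPsi_{\cV_{t'},\cV_{t''}}\|\le c_1\rho^{|t'-t''|}$.

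The crux is the two-sided convolution $\sum_{s\in\bbZ}\rho^{|s-t'|+|s-t''|}$, which, splitting the range at $s\le t'\wedge t''$, in between, and $s\ge t'\vee t''$ and summing the resulting geometric series, evaluates to $\rho^{|t'-t''|}\bigl(|t'-t''|+\tfrac{1+\rho^2}{1-\rho^2}\bigr)$; restricting $s$ to $\cT_{t:T}$ only shrinks the sum. Collecting the three contributions gives $\|\widetilde{\bSigma}_{\cV_{t'},\cV_{t''}}\|\le\rho^{|t'-t''|}\bigl(c_1^2L(|t'-t''|+\tfrac{1+\rho^2}{1-\rho^2})+2c_1\bigr)$, and the final step is to absorb the stray $2c_1\rho^{|t'-t''|}$ into the slack $c_1^2L\rho^{|t'-t''|}\bigl(\tfrac{2}{1-\rho^2}-\tfrac{1+\rho^2}{1-\rho^2}\bigr)=c_1^2L\rho^{|t'-t''|}$, valid because $c_1L\ge c_1\ge L_{\tbH}=2L+1\ge2$, yielding exactly $\osigma_{\bpi}(\underline{\bSigma}^{(\cV_t,\infty)}_{\cV_{t'},\cV_{t''}})\le c_1^2L\rho^{|t'-t''|}(|t'-t''|+\tfrac{2}{1-\rho^2})$. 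I expect the main obstacle to be bookkeeping rather than a single hard idea: setting up the scaled-variable cancellation and the $\osigma_{\bpi}$–to–$2$-norm bridge correctly, and carefully evaluating the two-sided geometric convolution that produces the characteristic linear-in-$|t'-t''|$ prefactor, with the $c_1L\ge2$ absorption as a small but essential closing check.
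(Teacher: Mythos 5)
Your proposal is correct and follows essentially the same route as the paper's proof: both express $\underline J^{(\cV_t,\infty)}(\bzero)$ as a quadratic form through the linear solution map $\underline\bPsi^{(\cV_t,\infty)}$ of Theorem \ref{thm:decay-ext-sub}, split the Hessian into a $\bPsi^\top(\text{block-diagonal cost})\bPsi$ part and two linear cross terms, bound the former by the convolution $c_1^2L\sum_{s}\rho^{|s-t'|+|s-t''|}$ and the latter by $O(c_1)\rho^{|t'-t''|}$, and absorb the cross terms into the slack of the geometric sum. The only differences are bookkeeping: you pass to the scaled variables and invoke Proposition \ref{prop:norm}\ref{prop:norm-a}--\ref{prop:norm-b} where the paper manipulates $\osigma_{\bpi}$ directly via Proposition \ref{prop:norm}\ref{prop:norm-d}, and your absorption check $c_1L\geq 2$ (which indeed holds since $\gamma_{\tbH}\leq 1$ and $\rho<1$ give $c_1\geq L_{\tbH}=2L+1$) replaces the paper's use of $c_1,L\geq 1$.
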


\begin{proof}{Proof.}

  First, we recall from Theorem \ref{thm:decay-ext-sub} that $\underline\bPsi^{(\cV_t,\red{T})}$ is the solution mapping of \eqref{eqn:spcd}. We have
  \begin{equation*}
    \underline J^{(\cV_t,\red{T})} (\bzero)=  (1/2)\cdot (\underline\bPsi^{(\cV_t,\red{T})}\underline\bp_{\cV_{t:T}})^\top \underline\bP_{\cV_{t:T},\cV_{t:T}} \underline\bPsi^{(\cV_t,\red{T})}\underline\bp_{\cV_{t:T}} + (\underline\bE_{\cV_{t:T},\cV_{t:T}}\underline\bp_{\cV_{t:T}})^\top \underline\bPsi^{(\cV_t,\red{T})}\underline\bp_{\cV_{t:T}},
  \end{equation*}
  where $\underline\bP\coloneqq\{\underline P_{ij}\}_{i,j\in\cV}$, $\underline\bE\coloneqq\{\underline E_{ij}\}_{i,j\in\cV}$, and
  \begin{align*}
    \underline P_{ij}\coloneqq
    \begin{cases}
      \pi_i
      \begin{bmatrix}
        \underline Q_i&\bzero\\
        \bzero&\underline R_i\\
      \end{bmatrix}&\text{if }i=j, \\
      \bzero&\text{otherwise}, 
    \end{cases},\quad
              \underline E_{ij}\coloneqq
              \begin{cases}
                \pi_i
                \begin{bmatrix}
                  \bI&\bzero&\bzero\\
                  \bzero&\bI&\bzero\\
                \end{bmatrix}&\text{if }i=j, \\
                \bzero&\text{otherwise}. 
              \end{cases}
  \end{align*}
  Thus, we can define $\underline\bSigma^{(\cV_t,\red{T})}$ as
  \begin{equation*}
    \underline\bSigma^{(\cV_t,\red{T})}_{}=(\underline\bPsi^{(\cV_t,\red{T})})^\top \underline\bP_{\cV_{t:T},\cV_{t:T}} \underline\bPsi^{(\cV_t,\red{T})} +(1/2)\left( \underline\bE_{\cV_{t:T},\cV_{t:T}}^\top \underline\bPsi^{(\cV_t,\red{T})}  +  (\underline\bPsi^{(\cV_t,\red{T})})^\top \underline\bE_{\cV_{t:T},\cV_{t:T}}\right).
  \end{equation*}
  By the block diagonal structure of $\underline\bP$ and $\underline\bE$,
  \begin{equation*}
    \underline\bSigma^{(\cV_t,\red{T})}_{\cV_{t'},\cV_{t''}}  = \sum_{t'''\in\cT_{t:T}}\left[(\underline\bPsi^{(\cV_{t},\red{T})}_{\cV_{t'''},\cV_{t'}})^\top \underline\bP_{\cV_{t'''},\cV_{t'''}} (\underline\bPsi^{(\cV_t,\red{T})}_{\cV_{t'''},\cV_{t''}})\right] + (1/2)\left( \underline\bE_{\cV_{t'},\cV_{t'}}^\top\underline\bPsi^{(\cV_t,\red{T})}_{\cV_{t'},\cV_{t''}} +  (\underline\bPsi^{(\cV_t,\red{T})}_{\cV_{t'},\cV_{t''}})^\top\underline\bE_{\cV_{t''},\cV_{t''}}\right).
  \end{equation*}
  By Proposition \ref{prop:norm}\ref{prop:norm-d}, Theorem \ref{thm:decay-ext-sub}, and $\osigma_{\bpi}(\underline\bP_{\cV_t,\cV_t})\leq L$, we obtain
  \begin{align*}
    \osigma_{\bpi}\left((\underline\bPsi^{(\cV_t,\red{T})}_{\cV_{t'''},\cV_{t'}})^\top \underline\bP_{\cV_{t'''},\cV_{t'''}} (\underline\bPsi^{(\cV_t,\red{T})}_{\cV_{t'''},\cV_{t''}})\right)
    &\leq \|\underline\bPsi^{(\cV_t,\red{T})}_{\cV_{t'''},\cV_{t'}}\|_{\bpi}\|\underline\bPsi^{(\cV_t,\red{T})}_{\cV_{t'''},\cV_{t''}}\|_{\bpi} \osigma_{\bpi}(\underline\bP_{\cV_{t'''},\cV_{t'''}}) \\
    &\leq c_1^2 L \rho^{|t'-t'''|+ |t''-t'''|}.
  \end{align*}  
  By Proposition \ref{prop:norm}\ref{prop:norm-d}, Theorem \ref{thm:decay-ext-sub}, subadditivity of $\osigma_{\bpi}(\cdot)$ and $\osigma_{\bpi}(\underline\bE_{\cV_{t'},\cV_{t'}})\leq 1$, we obtain
  \begin{align*}
    \osigma_{\bpi}\left(\left( \underline\bE^\top_{\cV_{t'},\cV_{t'}}\underline\bPsi^{(\cV_t,\red{T})}_{\cV_{t'},\cV_{t''}} +  (\underline\bPsi^{(\cV_t,\red{T})}_{\cV_{t'},\cV_{t''}})^\top\underline\bE_{\cV_{t''},\cV_{t''}}\right)\right)
    &\leq \osigma_{\bpi}(\underline\bE^\top_{\cV_{t'},\cV_{t'}}\underline\bPsi^{(\cV_t,\red{T})}_{\cV_{t'},\cV_{t''}}) + \osigma_{\bpi}((\underline\bPsi^{(\cV_t,\red{T})}_{\cV_{t''},\cV_{t'}})^\top\underline\bE_{\cV_{t''},\cV_{t''}})\\
    &\leq \osigma_{\bpi}(\underline\bE^\top_{\cV_{t'},\cV_{t'}})\|\underline\bPsi^{(\cV_t,\red{T})}_{\cV_{t'},\cV_{t''}}\|_{\bpi} + \osigma_{\bpi}(\underline\bE_{\cV_{t''},\cV_{t''}})\|\underline\bPsi^{(\cV_t,\red{T})}_{\cV_{t''},\cV_{t'}}\|_{\bpi}\\
    &\leq 2c_1\rho^{|t'-t''|}.
  \end{align*}
  Combining the above three displays, and applying the subadditivity of $\osigma_{\bpi}(\cdot)$, we have
  \begin{equation*}
    \osigma_{\bpi}(\underline\bSigma^{(\cV_t,\red{T})}_{\cV_{t'},\cV_{t''}}) \leq c_1\rho^{|t'-t''|} + \sum_{t'''\in\cT_{t:T}} c_1^2 L \rho^{|t'-t'''|+ |t''-t'''|}\leq c_1^2 L (\rho^{|t'-t''|}+\sum_{t'''\in\cT_{t:T}}\rho^{|t'-t'''|+ |t''-t'''|}),
  \end{equation*}  
  where the second inequality follows from $L,c_1\geq 1$ (cf. Assumption \ref{ass:main} and \eqref{eqn:constants}). For the right-hand side term, we note from $\rho\in(0,1)$ that 
  \begin{align*}
    \rho^{|t'-t''|} & + \sum_{t'''\in\cT_{t:T}}\rho^{|t'-t'''|+ |t''-t'''|}\\
                    &\leq  \rho^{|t'-t''|} + \sum_{t'''=t}^{t'\wedge t''}\rho^{|t'-t'''|+ |t''-t'''|}  + \sum_{t'''=t'\wedge t''+1}^{t'\vee t''-1}\rho^{|t'-t'''|+ |t''-t'''|} +  \sum_{t'''=t'\vee t''F}^{T}\rho^{|t'-t'''|+ |t''-t'''|} \\
                    &\leq \frac{\rho^{|t''-t'|}}{1-\rho^2}+|t''-t'|\rho^{|t''-t'|} + \frac{\rho^{|t''-t'|}}{1-\rho^2} = \rho^{|t'-t''|}(|t'-t''| + \dfrac{2}{1-\rho^2}).
  \end{align*}
  Combining the above two displays, we complete the proof.
  \qedhere
\end{proof}

Next, we prepare to analyze the stagewise dynamic regret. Let $\widehat{\bw}^{\red{(\textrm{cl},W)}}(\overline{w}_{-1})\coloneqq\{\widehat{w}^{\red{(\textrm{cl},W)}}_k(\overline{w}_{-1})\}_{k\in\cV}$ with 
\begin{equation}\label{equ:def:what}
  \widehat{w}^{\red{(\textrm{cl},W)}}_{k}(\overline{w}_{-1})\coloneqq \underline w^{(k,\red{T})}_k(\underline{w}^{\red{(\textrm{cl},W)}}_{a(k)}(\overline{w}_{-1}))
\end{equation}
This is a {\it hypothetical} augmented state-control variable at node $k$, which is obtained by implementing the optimal full-horizon policy from the previous augmented state-control variable $\underline w^{\red{(\textrm{cl},W)}}_{a(k)}(\overline{w}_{-1})$ (defined in \eqref{eqn:spcd-pol}). It can be equivalently expressed by
\begin{subequations}\label{eqn:hypo}
  \begin{align}
    \widehat{w}^{\red{(\textrm{cl},W)}}_k(\overline{w}_{-1})=\underline S^{(\red{T})}_{k,a(k)} \underline w^{\red{(\textrm{cl},W)}}_{a(k)} (\overline{w}_{-1})+ \sum_{t'\in\cT_{t(k):T}}\underline \bPsi^{(k,\red{T})}_{k,\cV^{(k)}_{t'}} \underline\bp_{\cV^{(k)}_{t'}}.
  \end{align}
  Also,
  \begin{align}
    \widehat{\bw}^{\red{(\textrm{cl},W)}}_{\cV_t}(\overline{w}_{-1})=\underline \bS^{(\red{T})}_{\cV_{t},\cV_{t-1}} \underline {\bw}^{\red{(\textrm{cl},W)}}_{\cV_{t-1}} (\overline{w}_{-1})+ \sum_{t'\in\cT_{t:T}}\underline \bPsi^{(\cV_t,\red{T})}_{\cV_t,\cV_{t'}} \underline\bp_{\cV_{t'}}.
  \end{align}
\end{subequations}
In the next lemma, we prove that this hypothetical augmented state-control variable $\widehat{\bw}^{\red{(\textrm{cl},W)}}_{\cV_t}(\overline{w}_{-1})$ is exponentially close to the actual augmented state-control variable $\underline\bw^{\red{(\textrm{cl},W)}}_{\cV_t}(\overline{w}_{-1})$ in $W$.

\begin{lemma}\label{lem:perf}
  Under Assumptions \ref{ass:fin} and \ref{ass:main} and given $\overline{w}_{-1}\in\mathbb{R}^{n_x}\times\mathbb{R}^{n_u}$, $\overline{\xi}_0\in\Xi_0$, and $W\geq \overline{W}$, we have
  \begin{equation*}
    \|\underline\bw^{\red{(\textrm{cl},W)}}_{\cV_t} (\overline{w}_{-1}) -\widehat{\bw}^{\red{(\textrm{cl},W)}}_{\cV_t} (\overline{w}_{-1})\|_{\bpi} \leq  \left(c_3  D + c_4\rho^{t/2} \|\overline{w}_{-1}\|\right)\rho^W,\quad \forall t\in\cT,
  \end{equation*}
  where $c_3, c_4, D$ are defined in \eqref{eqn:cs}.
\end{lemma}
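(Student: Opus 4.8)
The plan is to exploit that both $\underline\bw^{(W)}_{\cV_t}(\overline{w}_{-1})$ and $\widehat{\bw}^{(W)}_{\cV_t}(\overline{w}_{-1})$ are constructed from the \emph{same} previous augmented state $\underline\bw^{(W)}_{\cV_{t-1}}(\overline{w}_{-1})$, so that their difference does not propagate recursively and can be written in closed form. Subtracting \eqref{eqn:hypo} from \eqref{eqn:rec-full} and splitting $\cT_{t:T}=\cT_{t:t+W}\cup\cT_{t+W+1:T}$, I obtain for every $t\in\cT$ (with the convention $\underline\bw^{(W)}_{\cV_{-1}}=\overline{w}_{-1}$)
\[
\underline\bw^{(W)}_{\cV_t}-\widehat{\bw}^{(W)}_{\cV_t}
=\left(\underline\bS^{(W)}_{\cV_t,\cV_{t-1}}-\underline\bS^{(\infty)}_{\cV_t,\cV_{t-1}}\right)\underline\bw^{(W)}_{\cV_{t-1}}
+\sum_{t'\in\cT_{t:t+W}}\left(\underline\bPsi^{(\cV_t,W)}_{\cV_t,\cV_{t'}}-\underline\bPsi^{(\cV_t,\infty)}_{\cV_t,\cV_{t'}}\right)\underline\bp_{\cV_{t'}}
-\sum_{t'\in\cT_{t+W+1:T}}\underline\bPsi^{(\cV_t,\infty)}_{\cV_t,\cV_{t'}}\underline\bp_{\cV_{t'}},
\]
where I suppress the argument $\overline{w}_{-1}$. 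I then bound the three groups separately in $\|\cdot\|_{\bpi}$ using the submultiplicativity in Proposition \ref{prop:norm}\ref{prop:norm-c} and $\|\underline\bp_{\cV_{t'}}\|_{\bpi}\le D$ (Proposition \ref{prop:ev} with $k=0$, $\pi_0=1$).

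For the first group I would combine the truncation estimate $\|\underline\bS^{(W)}_{\cV_t,\cV_{t-1}}-\underline\bS^{(\infty)}_{\cV_t,\cV_{t-1}}\|_{\bpi}\le 4c_1^2L^2\rho^{2W}$ from Lemma \ref{lem:trunc} with the closed-loop stability bound. Translating Theorem \ref{thm:stab} into the probability-scaled norm via \eqref{eqn:stab-equiv} and summing $\sum_{t'}\rho^{|(t-1)-t'|/2}\le 2/(1-\rho^{1/2})$ gives
\[
\|\underline\bw^{(W)}_{\cV_{t-1}}\|_{\bpi}\le c_2\left(2L\rho^{(t-1)/2}\|\overline{w}_{-1}\|+\frac{2}{1-\rho^{1/2}}D\right),
\]
which also covers $t=0$ since $c_2,2L\ge1$. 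Multiplying the two bounds splits the first group into a disturbance piece $\le\tfrac{8c_1^2c_2L^2}{1-\rho^{1/2}}D\,\rho^{2W}$ and an initial-condition piece $\le 8c_1^2c_2L^3\,\rho^{2W+(t-1)/2}\|\overline{w}_{-1}\|$. Because $W\ge\overline{W}>0$ forces the integer $W\ge1$, I have $\rho^{2W}\le\rho^{W}$ and $\rho^{2W+(t-1)/2}\le\rho^{W+t/2}$, so these are bounded by $\tfrac{8c_1^2c_2L^2}{1-\rho^{1/2}}D\rho^W$ and $c_4\rho^{t/2}\|\overline{w}_{-1}\|\rho^W$, respectively; the entire $\|\overline{w}_{-1}\|$-dependence of the claimed bound arises here, since the remaining two groups involve only $\underline\bp$.

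For the overlapping part of the second group I would apply Lemma \ref{lem:trunc} termwise, $\|\underline\bPsi^{(\cV_t,W)}_{\cV_t,\cV_{t'}}-\underline\bPsi^{(\cV_t,\infty)}_{\cV_t,\cV_{t'}}\|_{\bpi}\le 2c_1^2L\rho^{2W-t'+t}$; writing $s=t'-t$, the sum $\sum_{s=0}^{W}\rho^{2W-s}=\rho^{W}\sum_{s=0}^{W}\rho^{s}\le\rho^W/(1-\rho)$ yields $\tfrac{2c_1^2L}{1-\rho}D\rho^W$. For the tail group I would use $\|\underline\bPsi^{(\cV_t,\infty)}_{\cV_t,\cV_{t'}}\|_{\bpi}\le c_1\rho^{t'-t}$ from Theorem \ref{thm:decay-ext-sub} and $\sum_{t'\ge t+W+1}\rho^{t'-t}\le\rho^{W+1}/(1-\rho)$, producing $\tfrac{c_1}{1-\rho}D\rho^W$; since $c_1\le c_1^2L$, the two disturbance contributions combine to at most $\tfrac{3c_1^2L}{1-\rho}D\rho^W\le\tfrac{4c_1^2L}{1-\rho}D\rho^W$. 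Adding all the disturbance pieces reproduces exactly $c_3=4c_1^2L\big(\tfrac{2c_2L}{1-\rho^{1/2}}+\tfrac{1}{1-\rho}\big)$, and the initial-condition piece gives the coefficient $c_4=8c_1^2c_2L^3$, completing the bound; the general $\overline{w}_{-1}\neq\bzero$ case needs no separate treatment because $\overline{w}_{-1}$ enters only through $\underline\bw^{(W)}_{\cV_{t-1}}$. The main obstacle is purely bookkeeping: keeping the overlapping-window exponent $\rho^{2W-t'+t}$ and the tail exponent $\rho^{t'-t}$ separate so that both geometric sums collapse to the same order $\rho^W$, and verifying that the faster decay $\rho^{2W}$ coming from the $\underline\bS$-difference is not degraded below $\rho^W$ after multiplication by the state bound—precisely where $W\ge\overline{W}$ (hence $W\ge1$) is invoked.
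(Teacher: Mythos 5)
Your proposal is correct and follows essentially the same route as the paper: the identical three-term decomposition of $\underline\bw^{(W)}_{\cV_t}-\widehat{\bw}^{(W)}_{\cV_t}$ (exploiting that both recursions share the same previous state $\underline\bw^{(W)}_{\cV_{t-1}}$), with the three pieces bounded via Lemma \ref{lem:trunc}, Theorem \ref{thm:decay-ext-sub}, and Theorem \ref{thm:stab}, recovering exactly the constants $c_3$ and $c_4$. If anything, your bookkeeping is slightly more careful than the paper's: you keep the $\rho^{2W}$ factor and invoke $W\geq 1$ to absorb the index shift incurred by applying Theorem \ref{thm:stab} at stage $t-1$, and you treat $t=0$ explicitly, whereas the paper discards a $\rho^{W}$ factor early and glosses over both points.
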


\begin{proof}{Proof.}
  From the definition of $\underline\bw^{\red{(\textrm{cl},W)}}_{\cV_t}(\overline{w}_{-1})$ and $\widehat{\bw}^{\red{(\textrm{cl},W)}}_{\cV_t}(\overline{w}_{-1})$, we have
  \begin{align*}
    \underline\bw^{\red{(\textrm{cl},W)}}_{\cV_t}-\widehat{\bw}^{\red{(\textrm{cl},W)}}_{\cV_t}
    &=\left(\underline\bS^{(\cV_t,W)}_{\cV_t,\cV_{t-1}}\underline\bw^{\red{(\textrm{cl},W)}}_{\cV_{t-1}} +  \underline\bPsi^{(\cV_t,W)}_{\cV_t,\cV_{t:t+W}} \underline\bp_{\cV_{t:t+W}}\right) -
      \left(\underline\bS^{(\cV_t,\red{T})}_{\cV_t,\cV_{t-1}}\underline\bw^{\red{(\textrm{cl},W)}}_{\cV_{t-1}} +  \underline\bPsi^{(\cV_t,\red{T})}_{\cV_t,\cV_{t:T}} \underline\bp_{\cV_{t:T}}\right)
    \\
    &=(\underline\bS^{(\cV_t,W)}_{\cV_t,\cV_{t-1}}-\underline\bS^{(\cV_t,\red{T})}_{\cV_t,\cV_{t-1}})\underline\bw^{\red{(\textrm{cl},W)}}_{\cV_{t-1}}+\sum_{t'\in\cT_{t:t+W}}(\underline\bPsi^{(\cV_t,W)}_{\cV_t,\cV_{t'}} - \underline\bPsi^{(\cV_t,\red{T})}_{\cV_t,\cV_{t'}}){\underline{\bp}}_{\cV_{t'}}- \sum_{t'\in\cT_{t+W+1:T}}\underline\bPsi^{(\cV_t,\red{T})}_{\cV_t,\cV_{t'}}\underline{\bp}_{\cV_{t'}}.
  \end{align*}
  Here, we suppress the dependency of $\widehat{\bw}^{\red{(\textrm{cl},W)}}$ on $\overline{w}_{-1}$ to ease the notation. Applying Lemma \ref{lem:trunc} and Theorem \ref{thm:decay-ext-sub},
  \begin{align*}
    \|\underline\bw^{\red{(\textrm{cl},W)}}_{\cV_t}-\widehat{\bw}^{\red{(\textrm{cl},W)}}_{\cV_t}\|_{\bpi}
    &\leq 4c_1^2L^2 \rho^{2W} \| \underline\bw^{\red{(\textrm{cl},W)}}_{\cV_{t-1}}\|_{\bpi} + \sum_{t'\in\cT_{t:t+W}} 2c_1^2L \rho^{2W-t'+t} \| \underline\bp_{\cV_{t'}}\|_{\bpi} + \sum_{t'\in\cT_{t+W+1:T}}c_1\rho^{t'-t}\|\underline{\bp}_{\cV_{t'}}\|_{\bpi}\\
    &\leq 2c_1^2L \rho^{W} \left( 2L\rho^{W}\| \underline\bw^{\red{(\textrm{cl},W)}}_{\cV_{t-1}}\|_{\bpi} + \sum_{t'\in\cT_{t:t+W}}  \rho^{W-t'+t} \| \underline\bp_{\cV_{t'}}\|_{\bpi} + \sum_{t'\in\cT_{t+W+1:T}}\rho^{t'-t-W}\|\underline{\bp}_{\cV_{t'}}\|_{\bpi}\right)\\
    &\leq 2c_1^2L \rho^{W} \left(2L\|\underline\bw^{\red{(\textrm{cl},W)}}_{\cV_{t-1}}\|_{\bpi} + \frac{2}{1-\rho}D\right)\\
    &\leq 2c_1^2L \rho^{W} \left(2c_2L\left(2L\rho^{t/2}\|\overline{w}_{-1}\| + \sum_{t'\in\cT}\rho^{|t-t'|/2}D\right) + \frac{2}{1-\rho}D\right)\\
    &\leq 4c_1^2L \rho^{W} \cbr{  \left(\frac{ 2 c_2 L}{1-\rho^{1/2}}+\frac{1}{1-\rho}\right) D + 2c_2L^2 \rho^{t/2} \|\overline{w}_{-1}\| }\\
    &\leq  \left(c_3  D + c_4\rho^{t/2} \|\overline{w}_{-1}\|\right)\rho^W,
  \end{align*}
  where the second inequality follows from $L,c_1\geq 1$; the third inequality follows from the summation of geometric series, $\rho\in(0,1)$, and $\| \underline\bp_{\cV_{t'}}\|_{\bpi}\leq D$ (from \eqref{eqn:cs} and Proposition \ref{prop:ev}); the fourth inequality follows from Theorem \ref{thm:stab}; the fifth inequality can be obtained by using the summation of geometric series; the last inequality can be obtained from the definitions of $c_3,c_4$ in \eqref{eqn:cs}.
  \qedhere
\end{proof}

Now we are ready to prove  Theorem \ref{thm:perf}.

\begin{proof}{Proof of Theorem \ref{thm:perf}.}
  From the definitions in \eqref{eqn:newJ}, \eqref{eqn:newnewJ}, for any $t\in\cT$,
  \begin{align}\label{eqn:perf-a}
    \underline{J}&^{(W)}_{\cV_t}(\overline{w}_{-1}) = \underline{J}^{(W)}_{\cV_{t+1}}(\overline{w}_{-1})
                   +(\underline\bx^{\red{(\textrm{cl},W)}}_{\cV_{t}})^\top (\frac{1}{2}\underline\bQ_{\cV_{t},\cV_{t}} \underline\bx^{\red{(\textrm{cl},W)}}_{\cV_{t}} -\underline\bq_{\cV_{t}})
                   +(\underline\bu^{\red{(\textrm{cl},W)}}_{\cV_{t}})^\top (\frac{1}{2}\underline\bR_{\cV_{t},\cV_{t}} \underline\bu^{\red{(\textrm{cl},W)}}_{\cV_{t}} -\underline\br_{\cV_{t}}),
  \end{align}

  where $\underline\bQ \coloneqq\{\underline Q_{ij}\}_{i,j\in\cV}$, where $\underline Q_{ij} \coloneqq
  \begin{cases}
    \pi_i\underline Q_{i} & \text{if } i=j, \\
    0&\text{otherwise},
  \end{cases}$ (similar for $\underline\bR$). Further,
  \begin{align*}
    \underline{J}^{(\cV_t,\red{T})}(\underline{\bw}^{\red{(\textrm{cl},W)}}_{\cV_{t-1}})
    & \stackrel{\mathclap{\eqref{eqn:newnewJ}}}{=} \sum_{k\in\cV_t} \pi_k\underline{\ell}_k(\underline{w}_k^{(k, \red{T})}(\underline{w}^{\red{(\textrm{cl},W)}}_{a(k)}) ) + \sum_{k\in\cV_{t}}\sum_{i\in \cV^{(k)}_{t+1}}\sum_{j\in \cV^{(i)}_{t+1:T}} \pi_j \underline{\ell}_j(\underline{w}_{j}^{(k, \red{T})}(\underline{w}_{a(k)}^{\red{(\textrm{cl},W)}}) ) \\
    & \stackrel{\mathclap{\substack{\text{Lem. \ref{lem:bellman}}}}}{=}\;\; \sum_{k\in\cV_t} \pi_k\underline{\ell}_k(\underline{w}_k^{(k, \red{T})}(\underline{w}^{\red{(\textrm{cl},W)}}_{a(k)}) ) + \sum_{k\in\cV_{t}}\sum_{i\in \cV^{(k)}_{t+1}}\sum_{j\in \cV^{(i)}_{t+1:T}}  \pi_j \underline{\ell}_j(\underline{w}_{j}^{(i, \red{T})}(\underline{w}_{k}^{(k, \red{T})}(\underline{w}_{a(k)}^{\red{(\textrm{cl},W)}})) ).\\
    &\stackrel{\mathclap{\eqref{equ:def:what}}}{=} \sum_{k\in\cV_t} \pi_k\underline{\ell}_k(\widehat{w}_k^{\red{(\textrm{cl},W)}}) + \sum_{k\in\cV_{t}}\sum_{i\in \cV^{(k)}_{t+1}}\sum_{j\in \cV^{(i)}_{t+1:T}} \pi_j \underline{\ell}_j(\underline{w}_j^{(i, \red{T})}(\widehat{w}_{a(i)}^{\red{(\textrm{cl},W)}}) )  \\
    & \stackrel{\mathclap{\eqref{eqn:newnewJ}}}{=} \sum_{k\in\cV_t} \pi_k\underline{\ell}_k(\widehat{w}_k^{\red{(\textrm{cl},W)}}) + \sum_{i\in \cV_{t+1}} \pi_i\underline{J}^{(i, \red{T})}(\widehat{w}_{a(i)}^{\red{(\textrm{cl},W)}})\\
    & \stackrel{\mathclap{\eqref{eqn:newnewJ}}}{=} \sum_{k\in\cV_t} \pi_k\underline{\ell}_k(\widehat{w}_k^{\red{(\textrm{cl},W)}}) + \underline{J}^{(\cV_{t+1}, \red{T})}(\widehat{\bw}_{\cV_t}^{(W)}).
  \end{align*}
  Here, we suppress the dependency of $\widehat{\bw}^{\red{(\textrm{cl},W)}}$ on $\overline{w}_{-1}$ for concise notation. We also let $\underline{J}^{(W)}_{\cV_{T+1}}(\overline{w}_{-1})=0$ and $\underline{J}^{(\cV_{T+1},\red{T})}(\underline{\bw}^{\red{(\textrm{cl},W)}}_{\cV_{T}}) =0$. We now can write
  \begin{multline}\label{eqn:perf-b}
    \underline{J}^{(\cV_t,\red{T})}(\underline{\bw}^{\red{(\textrm{cl},W)}}_{\cV_{t-1}}) = \underline{J}^{(\cV_{t+1},\red{T})}(\underline{\bw}^{\red{(\textrm{cl},W)}}_{\cV_{t}}) + \underline{J}^{(\cV_{t+1},\red{T})}(\widehat{\bw}^{\red{(\textrm{cl},W)}}_{\cV_{t}}) - \underline{J}^{(\cV_{t+1},\red{T})}(\underline{\bw}^{\red{(\textrm{cl},W)}}_{\cV_{t}})\\
    +(\widehat{\bx}^{\red{(\textrm{cl},W)}}_{\cV_{t}})^\top ((1/2)\underline\bQ_{\cV_{t},\cV_{t}} \widehat{\bx}^{\red{(\textrm{cl},W)}}_{\cV_{t}} -\underline\bq_{\cV_{t}})
    +(\widehat{\bu}^{\red{(\textrm{cl},W)}}_{\cV_{t}})^\top ((1/2)\underline\bR_{\cV_{t},\cV_{t}} \widehat{\bu}^{\red{(\textrm{cl},W)}}_{\cV_{t}} -\underline\br_{\cV_{t}}), \quad \forall t\in\cT.
  \end{multline}
  By subtracting \eqref{eqn:perf-b} from \eqref{eqn:perf-a} and noting that $\widehat{\bx}^{\red{(\textrm{cl},W)}}_{\cV_{t}} = \underline{\bx}^{\red{(\textrm{cl},W)}}_{\cV_{t}}$ (they are fixed by the constraint \eqref{eqn:spcd-con-1}),
  \begin{subequations}\label{eqn:perf}
    \begin{align}
      \nonumber \underline{J}^{(W)}_{\cV_t} -\underline{J}^{(\cV_t,\red{T})}(\underline{\bw}^{\red{(\textrm{cl},W)}}_{\cV_{t-1}})
      & =\underline{J}^{(W)}_{\cV_{t+1}}(\overline{w}_{-1}) - \underline{J}^{(\cV_{t+1},\red{T})}(\underline{\bw}^{\red{(\textrm{cl},W)}}_{\cV_{t}}) \\
      &+ (\underline\bu^{\red{(\textrm{cl},W)}}_{\cV_{t}})^\top ((1/2)\underline\bR_{\cV_{t},\cV_{t}} \underline\bu^{\red{(\textrm{cl},W)}}_{\cV_{t}} - \underline\br_{\cV_{t}}) - (\widehat{\bu}^{\red{(\textrm{cl},W)}}_{\cV_{t}})^\top ((1/2)\underline\bR_{\cV_{t},\cV_{t}} \widehat{\bu}^{\red{(\textrm{cl},W)}}_{\cV_{t}} - \underline\br_{\cV_{t}})\label{eqn:term-1}\\
      &+ \underline{J}^{(\cV_{t+1},\red{T})}(\underline{\bw}^{\red{(\textrm{cl},W)}}_{\cV_{t}})-\underline{J}^{(\cV_{t+1},\red{T})}(\widehat{\bw}^{\red{(\textrm{cl},W)}}_{\cV_{t}}),\label{eqn:term-2}
    \end{align}
  \end{subequations}
  for $t\in\cT$. By Theorem \ref{thm:stab}, Lemma \ref{lem:perf}, and the fact that $\rho\in(0,1)$, we have for any $t\in\cT$,
  \begin{align}\label{eqn:plus}
    \|\underline{\bw}^{\red{(\textrm{cl},W)}}_{\cV_{t}}+\widehat{\bw}^{\red{(\textrm{cl},W)}}_{\cV_{t}}\|
    &\leq 2\|\underline{\bw}^{\red{(\textrm{cl},W)}}_{\cV_{t}} \| + \| \underline{\bw}^{\red{(\textrm{cl},W)}}_{\cV_{t}}-\widehat{\bw}^{\red{(\textrm{cl},W)}}_{\cV_{t}}\| \\\nonumber
    &\leq 2\left(2c_2L\rho^{t/2}\|\overline{w}_{-1}\| + \frac{2c_2}{1-\rho^{1/2} }D\right) + \left(c_3D + c_4\rho^{t/2}\|w_{-1}\|\right)\rho^{W}\\\nonumber
    &\leq \left(\frac{4c_2}{1-\rho^{1/2}} + c_3\right)D + (4c_2L+ c_4)\rho^{t/2} \| w_{-1}\|.
  \end{align}
  Therefore, we can simplify the term in \eqref{eqn:term-1} by
  \begin{align*}
    \text{term in \eqref{eqn:term-1}}
    &= (\underline\bu^{\red{(\textrm{cl},W)}}_{\cV_{t}}-\widehat{\bu}^{\red{(\textrm{cl},W)}}_{\cV_{t}})((1/2)\underline\bR_{\cV_{t},\cV_{t}} (\underline\bu^{\red{(\textrm{cl},W)}}_{\cV_{t}}+\widehat{\bu}^{\red{(\textrm{cl},W)}}_{\cV_{t}}) - \underline\br_{\cV_{t}})\\
    &\leq \|\underline\bu^{\red{(\textrm{cl},W)}}_{\cV_{t}}-\widehat{\bu}^{\red{(\textrm{cl},W)}}_{\cV_{t}}\|\left((L/2)\| \underline\bu^{\red{(\textrm{cl},W)}}_{\cV_{t}}+\widehat{\bu}^{\red{(\textrm{cl},W)}}_{\cV_{t}}\| + \| \underline\br_{\cV_{t}}\|\right)\\
    &\leq (c_3D + c_4\rho^{t/2}\|w_{-1}\| )\rho^W\left(\left(\frac{2c_2L}{1-\rho^{1/2}} +c_3 L/2 + 1\right)D + (2c_2L^2+ c_4L/2)\rho^{t/2} \| w_{-1}\|\right)  \\
    & \leq \Bigg[c_3\left(\frac{2c_2L}{1-\rho^{1/2}} +c_3 L/2 + 1\right) D^2\\
    &\quad+ \left(\frac{2c_2c_4L}{1-\rho^{1/2}} +c_3c_4 L + c_4 + 2c_2c_3L^2\right) \rho^{t/2}D \|w_{-1}\| + c_4(2c_2L^2+c_4L/2)\rho^t \|w_{-1}\|^2\Bigg] \rho^W.
  \end{align*}
  Here, the first inequality follows from Assumption \ref{ass:main}\ref{ass:main-bdd}, the second inequality follows from \eqref{eqn:plus} and Lemma \ref{lem:perf}, and the third inequality can be obtained by rearranging terms. Furthermore, for the term in \eqref{eqn:term-2}, we have
  \begin{align*}
    &\text{term in \eqref{eqn:term-2}}\\
    &= (1/2)(\underline\bp_{\cV_{t+1:T}} +\underline\bLambda_{\cV_{t+1:T},\cV_t} \underline{\bw}^{\red{(\textrm{cl},W)}}_{\cV_t} )^\top \underline\bSigma^{(\cV_{t+1},\red{T})} (\underline\bp_{\cV_{t+1:T}} +\underline\bLambda_{\cV_{t+1:T},\cV_t} \underline{\bw}^{\red{(\textrm{cl},W)}}_{\cV_t} )\\
    &\qquad - (1/2)(\underline\bp_{\cV_{t+1:T}} +\underline\bLambda_{\cV_{t+1:T},\cV_t} \widehat{\bw}^{\red{(\textrm{cl},W)}}_{\cV_t} )^\top \underline\bSigma^{(\cV_{t+1},\red{T})} (\underline\bp_{\cV_{t+1:T}} +\underline\bLambda_{\cV_{t+1:T},\cV_t} \widehat{\bw}^{\red{(\textrm{cl},W)}}_{\cV_t} )\\
    &= (1/2)(\underline\bLambda_{\cV_{t+1:T},\cV_t} (\underline{\bw}^{\red{(\textrm{cl},W)}}_{\cV_t} - \widehat{\bw}^{\red{(\textrm{cl},W)}}_{\cV_t}) )^\top \underline\bSigma^{(\cV_{t+1},\red{T})} (2\underline\bp_{\cV_{t+1:T}} +\underline\bLambda_{\cV_{t+1:T},\cV_t} (\underline{\bw}^{\red{(\textrm{cl},W)}}_{\cV_t} +\widehat{\bw}^{\red{(\textrm{cl},W)}}_{\cV_t} ))\\
    &= (1/2)(\underline\bLambda_{\cV_{t+1},\cV_t} (\underline{\bw}^{\red{(\textrm{cl},W)}}_{\cV_t} -  \widehat{\bw}^{\red{(\textrm{cl},W)}}_{\cV_t}))^\top \left[\underline\bSigma^{(\cV_{t+1},\red{T})}_{\cV_{t+1},\cV_{t+1:T}} (2\underline\bp_{\cV_{t+1:T}})+\underline\bSigma^{(\cV_{t+1},\red{T})}_{\cV_{t+1},\cV_{t+1}}\underline\bLambda_{\cV_{t+1},\cV_t} (\underline{\bw}^{\red{(\textrm{cl},W)}}_{\cV_t} +  \widehat{\bw}^{\red{(\textrm{cl},W)}}_{\cV_t}))\right]\\
    &= \sum_{t'\in\cT_{t+1:T}} (1/2)(\underline\bLambda_{\cV_{t+1},\cV_{t}} (\underline{\bw}^{\red{(\textrm{cl},W)}}_{\cV_t}-\widehat{\bw}^{\red{(\textrm{cl},W)}}_{\cV_t}))^\top \underline\bSigma^{(\cV_{t+1},\red{T})}_{\cV_{t+1},\cV_{t'}} (2\underline{\bp}_{\cV_{t'}})\\
    &\qquad +(1/2)(\underline\bLambda_{\cV_{t+1},\cV_{t}} (\underline{\bw}^{\red{(\textrm{cl},W)}}_{\cV_t}- \widehat{\bw}^{\red{(\textrm{cl},W)}}_{\cV_t}))^\top \underline\bSigma^{(\cV_{t+1},\red{T})}_{\cV_{t+1},\cV_{t+1}} (\underline\bLambda_{\cV_{t+1},\cV_{t}} (\underline{\bw}^{\red{(\textrm{cl},W)}}_{\cV_t}+ \widehat{\bw}^{\red{(\textrm{cl},W)}}_{\cV_t}))\\
    &\leq \sum_{t'\in\cT_{t+1:T}} (1/2)\left\|\underline\bLambda_{\cV_{t+1},\cV_{t}}\right\|_{\bpi} \left\|\underline{\bw}^{\red{(\textrm{cl},W)}}_{\cV_t}-\widehat{\bw}^{\red{(\textrm{cl},W)}}_{\cV_t}\right\|_{\bpi} \usigma_{\bpi}\left(\underline\bSigma^{(\cV_{t+1},\red{T})}_{\cV_{t+1},\cV_{t'}}\right) 2 \left\|\underline{\bp}_{\cV_{t'}}\right\|_{\bpi}\\
    &\qquad +(1/2)\left\|\underline\bLambda_{\cV_{t+1},\cV_{t}}\right\|_{\bpi} \left\|\underline{\bw}^{\red{(\textrm{cl},W)}}_{\cV_t}- \widehat{\bw}^{\red{(\textrm{cl},W)}}_{\cV_t}\right\|_{\bpi} \usigma_{\bpi}\left(\underline\bSigma^{(\cV_{t+1},\red{T})}_{\cV_{t+1},\cV_{t+1}}\right) \left\|\underline\bLambda_{\cV_{t+1},\cV_{t}}\right\|_{\bpi} \left\|\underline{\bw}^{\red{(\textrm{cl},W)}}_{\cV_t}+ \widehat{\bw}^{\red{(\textrm{cl},W)}}_{\cV_t}\right\|_{\bpi}\\
    &\leq  \| \underline{\bw}^{\red{(\textrm{cl},W)}}_{\cV_t}-\widehat{\bw}^{\red{(\textrm{cl},W)}}_{\cV_t}\|_{\bpi}\sum_{t'\in\cT_{t+1:T}} (1/2)  (2L) c_1^2 L\rho^{t'-t-1}(t'-t-1 + \dfrac{2}{1-\rho^2})  2D \\
    &\qquad + (1/2)(2L)^2 \frac{2c_1^2L}{1-\rho^2}\| \underline{\bw}^{\red{(\textrm{cl},W)}}_{\cV_t}-\widehat{\bw}^{\red{(\textrm{cl},W)}}_{\cV_t}\|_{\bpi}\|\underline{\bw}^{\red{(\textrm{cl},W)}}_{\cV_t}+\widehat{\bw}^{\red{(\textrm{cl},W)}}_{\cV_t}\|_{\bpi}\\
    &\leq  2c_1^2L^2 \| \underline{\bw}^{\red{(\textrm{cl},W)}}_{\cV_t}-\widehat{\bw}^{\red{(\textrm{cl},W)}}_{\cV_t}\|_{\bpi}\Bigg(\sum_{t'\in\cT_{t+1:T}}  \left( \rho^{t'-t-1}(t'-t-1 + \dfrac{2}{1-\rho^2})D\right) +  \frac{2L}{1-\rho^2}\|\underline{\bw}^{\red{(\textrm{cl},W)}}_{\cV_t}+\widehat{\bw}^{\red{(\textrm{cl},W)}}_{\cV_t}\|_{\bpi}\Bigg)\\
    &\leq  2c_1^2L^2  \rho^W \left(c_3D +  c_4\rho^{t/2} \|\overline{w}_{-1}\|\right) \Bigg( \left(\frac{1}{(1-\rho)^2} +\frac{2}{(1-\rho)(1-\rho^2)} + \frac{2L}{1-\rho^2}\left(\frac{4c_2}{1-\rho^{1/2}}+c_3\right)\right)D\\
    &\qquad  + \frac{2L}{1-\rho^2}(4c_2L+ c_4)\rho^{t/2}\|\overline{w}_{-1}\| \Bigg)\\
    &\leq \Bigg[\frac{2c_1^2 c_3 L^2}{1-\rho^2}\left( -1 + 2c_3 L + \frac{4}{1-\rho} + \frac{8c_2L}{1-\rho^{1/2}} \right)D^2 \\
    &\qquad +  \frac{2c_1^2 L^2}{1-\rho^2}  \left(-c_4 + 2c_3c_4 L + \frac{4c_4}{1-\rho} + \frac{8c_2c_4L}{1-\rho^{1/2}} + 2c_3L(4c_2L + c_4) \right) \rho^{t/2}D\|\overline{w}_{-1}\| \\
    &\qquad +  \frac{4c_1^2 c_4 L^3(4c_2L+c_4)}{1-\rho^2}   \rho^{t}\|\overline{w}_{-1}\|^2\Bigg]\rho^W. 
  \end{align*}
  Here, the first equality follows from the definition of $\underline\bSigma^{(\cV_{t+1},\red{T})}$, the second equality can be obtained by rearranging terms, and the third equality follows from the observation that $\underline\bLambda_{\cV_{t+2:T},\cV_t}=\bzero$; the first inequality follows from Definition \ref{def:norm} and Proposition \ref{prop:norm}; the second inequality follows from Lemma \ref{lem:sig} and \eqref{eqn:lambound}; the third inequality can be obtained by rearranging; the fourth inequality follows from \eqref{eqn:plus} and Lemma \ref{lem:perf}; and the last inequality can be obtained by rearranging terms and noting that
  \begin{equation*}
    \frac{1}{(1-\rho)^2} +\frac{2}{(1-\rho)(1-\rho^2)} + \frac{2L}{1-\rho^2}\left(\frac{4c_2}{1-\rho^{1/2}}+c_3\right) = \frac{1}{1-\rho^2}\left( -1 + 2c_3 L + \frac{4}{1-\rho} + \frac{8c_2L}{1-\rho^{1/2}} \right).
  \end{equation*}
  By taking the summation of \eqref{eqn:perf} over $t\in\cT$, we obtain 
  \begin{align}\label{eqn:final}
    &\underline{J}^{(W)}_{\cV_0}(\overline{w}_{-1})- \underline{J}^{(\cV_0,\red{T})}(\overline{w}_{-1})\nonumber \\
    &\leq
      \Bigg\{
      \left[c_3\left(\frac{2c_2L}{1-\rho^{1/2}} +c_3 L/2 + 1\right) + \frac{2c_1^2 c_3 L^2}{1-\rho^2}\left( -1 + 2c_3 L + \frac{4}{1-\rho} + \frac{8c_2L}{1-\rho^{1/2}} \right)\right]D^2 T \nonumber\\
    &\qquad+\Bigg[\frac{2c_2c_4L}{1-\rho^{1/2}} +c_3c_4 L + c_4 + 2c_2c_3L^2+\\
    &\qquad\qquad\frac{2c_1^2 L^2}{1-\rho^2}  \left(-c_4 + 2c_3c_4 L + \frac{4c_4}{1-\rho} + \frac{8c_2c_4L}{1-\rho^{1/2}} + 2c_3L(4c_2L + c_4) \right)\Bigg]\frac{D\|\overline{w}_{-1}\|}{1-\rho^{1/2}} \nonumber\\ 
    &\qquad+\left[c_4(2c_2L^2+c_4L/2) + \frac{4c_1^2 c_4 L^3(4c_2L+c_4)}{1-\rho^2}\right]\frac{\|\overline{w}_{-1}\|^2}{1-\rho}
      \Bigg\}\rho^W \nonumber\\
    &\leq
      \left\{c_5 D^2 T  + c_6 D\|\overline{w}_{-1}\|+ c_7 \|\overline{w}_{-1}\|^2\right\}\rho^W,
  \end{align}
  where the second inequality follows from the definition of $c_5, c_6, c_7$ in \eqref{eqn:cs}. We observe from Proposition \ref{prop:scen}\ref{prop:scen-c} ($\pi_{i|k}$ is the conditional probability), Theorem \ref{thm:decay-ext-sub} (a unique solution of \eqref{eqn:spcd} exists), Lemma \ref{lem:equiv} (the solution of \eqref{eqn:spcd} is the solution of \eqref{eqn:spc}), and the definition in \eqref{eqn:newJ} that
  \begin{equation}\label{eqn:Jequiv-1}
    {J}^{(W)}(\xi_0;\overline{w}_{-1})
    = \underline{J}^{(W)}_{\cV_0}(\overline{w}_{-1}).
  \end{equation}
  By the definitions in \eqref{eqn:orig} and \eqref{eqn:origd}, their equivalence (Lemma \ref{lem:equiv}), the existence of unique solutions (Theorem \ref{thm:decay-ext}), we also have 
  \begin{equation}\label{eqn:Jequiv-2}
    {J}^{\star}(\xi_0;\overline{w}_{-1}) = \underline{J}^{(\cV_0,\red{T})}_{}(\overline{w}_{-1}).
  \end{equation}
  Combining \eqref{eqn:final}, \eqref{eqn:Jequiv-1}, and \eqref{eqn:Jequiv-2}, we complete the proof.
  \qedhere
\end{proof}

\section*{Acknowledgment}
This material is based upon work supported by the U.S. Department of Energy, Office of Science, Office of Advanced Scientific Computing Research (ASCR) under Contract DE-AC02-06CH11347. 

\bibliographystyle{informs2014}
\bibliography{main,smpc}

\vspace{0.1cm}
\begin{flushright}
	\scriptsize \framebox{\parbox{2.5in}{Government License: The
			submitted manuscript has been created by UChicago Argonne,
			LLC, Operator of Argonne National Laboratory (``Argonne").
			Argonne, a U.S. Department of Energy Office of Science
			laboratory, is operated under Contract
			No. DE-AC02-06CH11357.  The U.S. Government retains for
			itself, and others acting on its behalf, a paid-up
			nonexclusive, irrevocable worldwide license in said
			article to reproduce, prepare derivative works, distribute
			copies to the public, and perform publicly and display
			publicly, by or on behalf of the Government. The Department of Energy will provide public access to these results of federally sponsored research in accordance with the DOE Public Access Plan. http://energy.gov/downloads/doe-public-access-plan. }}
	\normalsize
\end{flushright}

\end{document}